\documentclass[a4paper,reqno,10pt]{amsart}
\usepackage[left=2.8cm,right=2.8cm,top=2.8cm,bottom=2.8cm]{geometry}
\usepackage[T1]{fontenc}
\usepackage{indentfirst} 
\usepackage{amsmath,amsfonts,amssymb,,mathrsfs}

\usepackage{bbm}
\usepackage[colorlinks=true]{hyperref}
\usepackage{enumerate}
\usepackage{graphicx}
\usepackage{xcolor}
\usepackage[all]{xy}
\usepackage{authblk}     
\usepackage{footmisc}    
\usepackage{abstract}    
\usepackage{titling}     
\usepackage{fancyhdr}

%
\newcommand{\restr}{%
  \,\raisebox{-.127ex}{\reflectbox{\rotatebox[origin=br]{-90}{$\lnot$}}}\,%
}

\theoremstyle{definition}
\newtheorem{definition}{Definition}[section]
\theoremstyle{remark}
\newtheorem{remark}[definition]{Remark}
\theoremstyle{theorem}
\newtheorem{theorem}[definition]{Theorem}
\theoremstyle{theorem}
\newtheorem{lemma}[definition]{Lemma}
\theoremstyle{remark}

\theoremstyle{theorem}
\newtheorem{corollary}[definition]{Corollary}
\theoremstyle{theorem}
\newtheorem{proposition}[definition]{Proposition}
\theoremstyle{theorem}

\numberwithin{equation}{section}
\allowdisplaybreaks

\def\lambdant#1{\mathchoice
    {\XXint\displaystyle\textstyle{#1}}%
    {\XXint\textstyle\scriptstyle{#1}}%
    {\XXint\scriptstyle\scriptscriptstyle{#1}}%
    {\XXint\scriptscriptstyle\scriptscriptstyle{#1}}%
    \!\int}
\def\XXint#1#2#3{{\setbox0=\hbox{$#1{#2#3}{\int}$}
      \vcenter{\hbox{$#2#3$}}\kern-.5\wd0}}

\def\mint{\lambdant-}

\definecolor{ao}{rgb}{0.0, 0.5, 0.0}

\DeclareMathOperator*{\esup}{ess\sup}

\title{\textbf{Rectifiability of a class of integral geometric measures and applications}}

\newcommand{\runningtitle}{Rectifiability of integral geometric measures}

\author{Emanuele Tasso}
\affil[1]{TU Wien, Institute of Analysis and Scientific Computing}
\affil{Wiedner Hauptstrasse 8-10, 1040 Vienna (Austria)}

\pagestyle{fancy}
\fancyhf{}
\fancyhead[L]{\runningtitle}  
\fancyhead[R]{\thepage}


\begin{document}
\maketitle

\section*{Acknowledgements}
This paper has been supported by the Austrian Science Fund (\textbf{FWF}) projects \textbf{Y1292} and \textbf{F65}. 

\vspace{7mm}
\begin{abstract}
We resolve a long-standing open problem posed by Federer concerning the rectifiability of the integral geometric measure with exponent $p \in (1,\infty]$, thereby settling a question that has persisted since its formulation in \cite[3.3.16]{fed1} and further discussed in \cite{mat2, mat4}. As an application, we establish two novel results related to Vitushkin’s conjecture. First, in a multi-scale setting, we provide an affirmative answer to Vitushkin’s conjecture for sets with finite integral geometric measure, within regimes of Favard length behavior at small scales that have not been previously addressed. Second, in a single-scale framework, we extend the Besicovitch-Federer projection theorem beyond the classical $\sigma$-finite setting, namely for planar sets intersecting a typical line in finitely many points. 
\end{abstract}


\vspace{3mm}

\textbf{Keywords:} Integral geometric measure, Besicovitch-Federer's projection theorem, Favard length, Vitushkin's conjecture, unrectifiable set, disintegration

\section{Introduction}
The origins of integral geometric measure can be traced back to classical problems in geometric probability. In the 18\textsuperscript{th} century, Buffon's needle problem introduced the idea of using probabilistic methods to evaluate geometric quantities like length and area. Building upon this foundation, Crofton later formulated what is now known as the \emph{Crofton formula}, which relates the length of a planar curve to the expected number of its intersections with random lines.

These ideas were rigorously developed in the 20\textsuperscript{th} century by Santaló, who formalized the field of integral geometry by introducing a systematic framework based on group-invariant measures over spaces of geometric configurations, such as the Grassmannian of subspaces. 

Within this framework, the $m$-dimensional integral geometric measure of a Borel set $B \subset \mathbb{R}^n$ is defined by
\begin{equation}
\label{e:intgeodirectdef}
\mathcal{I}^m_1(B) := \int_{\mathrm{Gr}(n,m)} \left( \int_{\mathbb{R}^n} \#\left( B \cap \pi_V^{-1}(x) \right) \, d\mathcal{H}^{m}(x) \right) d\gamma_{n,m}(V),
\end{equation}
where $\mathrm{Gr}(n,m)$ denotes the Grassmannian of $m$-dimensional linear subspaces of $\mathbb{R}^n$, $\pi_V^{-1}(x)$ is the affine fiber over $x \in V$, $\mathcal{H}^m$ is the $m$-dimensional Hausdorff measure, and $\gamma_{n,m}$ is the natural rotation-invariant probability measure on the Grassmannian. The symbol $\#$ denotes the number of points of the intersection $B \cap \pi_V^{-1}(x)$

One of the central results in integral geometry, viewed as a generalization of Crofton's classical formula, is that, for sufficiently regular sets, such as $m$-rectifiable sets, the integral geometric measure $\mathcal{I}^m_1$ coincides with the $m$-dimensional Hausdorff measure $\mathcal{H}^m$, up to a constant depending on $n$ and $m$. This identification was a key achievement in integral geometry, as it allows geometric quantities to be understood in terms of intersection probabilities, and conversely, enables the derivation of probabilistic laws from geometric structure.

In modern geometric measure theory, Federer \cite{fed1} provided an equivalent definition of the integral geometric measure using the concept of the \emph{Favard length}. For a Borel set $B \subset \mathbb{R}^n$, the Favard length is defined as
\[
\mathrm{Fav}(B) := \int_{\mathrm{Gr}(n,m)} \mathcal{H}^m(\pi_V(B)) \, d\gamma_{n,m}(V),
\]
The measure $\mathcal{I}^m_1$ can then be obtained via the classical Carathéodory procedure by means of the Favard length and the family of coverings  consisting of all Borel sets (see \cite[Subsection 2.10.5]{fed1}). 

Notice that the behavior of the integral geometric measure is intimately tied to the geometric structure of sets. Indeed, the Besicovitch–Federer projection theorem characterizes purely unrectifiable sets with finite Hausdorff measure as those having zero Favard length. This characterization allows one to completely determine the behavior of $\mathcal{I}^m_1$ on sets with finite $m$-dimensional Hausdorff measure (see \cite[Theorem 3.2.26]{fed1}).

In order to extend this understanding to all sets, Federer introduced a natural generalization involving uniformly convex constraints, parameterized by an exponent $p \in (1, \infty]$,
\begin{equation}
\label{e:favp}
\mathrm{Fav}_p(B) := \left( \int_{\mathrm{Gr}(n,m)} \mathcal{H}^m(\pi_V(B))^p \, d\gamma_{n,m}(V) \right)^{\frac{1}{p}},
\end{equation}
which gives rise, via Caratheodory's contruction, to a corresponding family of integral geometric measures $\mathcal{I}^m_p$ (see \cite[Subsection 2.10.5]{fed1}). This generalization was partly motivated by a structure result of Mickle \cite{mic}, which implies that
\[
\mathcal{I}^m_\infty(E) < \infty \ \Rightarrow \ \mathcal{I}^m_p(E) \simeq \mathcal{I}^m_\infty(E), \ \ \text{for every $p \in [1,\infty]$},
\]
up to a renormalization constant depending only on the dimension and the exponent.

\vspace{5mm}

 This leads Federer in \cite[Subsection 3.3.16]{fed1} to formulate the fundamental question (see also the more recent surveys \cite[Problem 9]{mat5} and \cite[Subsection 5.5]{mat4}):

 \vspace{1mm}
\[
\textbf{Federer (Q.1)}: \emph{\text{Do the measures $\mathcal{I}^m_{p_1}$ and $\mathcal{I}^m_{p_2}$ for $p_1 \neq p_2$ coincide up to a constant multiple?}}
\]

\vspace{4mm}
The endpoint cases $p = 1$ and $p = \infty$ were resolved respectively in the negative and positive directions by Mattila~\cite{mat2} and Mickle~\cite{mic}. 

The result in~\cite{mic} (see also \cite{nem}) implies that, starting from the general structure theorem in~\cite[Theorem 3.3.12]{fed1}, which encompasses the celebrated Besicovitch–Federer structure theorem, and with additional work, the assumption $\mathcal{I}^m_\infty(E) < \infty$ implies $\mathcal{I}^m_\infty(E \setminus R)=0$ for some countably $m$-rectifiable set $R \subset \mathbb{R}^n$ (see also~\cite[Theorem 3.3.14]{fed1}). However, this argument appears to be specific to the case $p = \infty$, as discussed in more detail below. 

In the opposite direction, an important negative result is due to Mattila~\cite{mat2}, who constructed a compact set $K \subset \mathbb{R}^2$ such that
\[
0 < \mathcal{I}^1_1(K) < \mathcal{I}^1_p(K) = \infty \quad \text{for every } p \in (1,\infty].
\]
The construction in~\cite{mat2} is based on an idea of Talagrand~\cite{tal}: by iterating a simple geometric operation on an initial parallelogram, one produces (in the limit) a purely unrectifiable set with the desired separation among the $\mathcal{I}^1_p$-measures. 

Beyond these two endpoint cases, the question for intermediate exponents $p \in (1,\infty)$ has remained open.

\subsection{Our solution to question (Q.1)} In this paper, we answer Question~(Q.1) affirmatively for all $p > 1$ by proving the following rectifiability result:

\begin{theorem}
\label{t:fedpronew}
    Let $E \subset \mathbb{R}^n$ be a Borel set with finite $\mathcal{I}^m_p$-measure. If $p >1$ then 
    \begin{equation}
    \label{e:fedpronew}
    \mathcal{I}^m_p(E \setminus R)=0, 
    \end{equation}
    for some countably $m$-rectifiable Borel set $R \subset \mathbb{R}^n$
\end{theorem}

This result immediately implies that the measures $\mathcal{I}^m_{p_1}$ and $\mathcal{I}^m_{p_2}$ are equivalent provided $1 < p_1 \leq p_2 \leq \infty$, thereby resolving Federer's problem. Indeed, the vanishing of $\mathcal{I}^m_p$ is equivalent to the vanishing of the Favard length for any exponent $p \in [1,\infty]$ leading to the coincidence of all integral geometric measures on the class of negligible sets. In addition, as a consequence of the Coarea formula for Lipschitz maps on rectifiable sets (see, e.g., \cite[Theorem 3.3.13]{fed1}), Crofton-type formulas hold for the entire range of exponents:
\begin{equation}
\label{e:compatibility}
\mathcal{I}^m_p(R) = c_{n,m,p} \, \mathcal{H}^m(R), \quad \text{for some constant } c_{n,m,p} > 0 \text{ and every } p \in [1,\infty].
\end{equation}
Theorem~\ref{t:fedpronew} thus completes the desired identification by establishing the behavior of $\mathcal{I}^m_p$ on all sets with finite measure. It is also worth noting that our result does not imply that formula \eqref{e:compatibility} extends to all sets. Indeed, it is possible to construct sets with zero Favard length but non-$\sigma$-finite corresponding Hausdorff measure (see, e.g., \cite[Corollary 2]{mat3}).

We finally note that Theorem~\ref{t:fedpronew} holds under a more general assumption than the one stated above. Specifically, the result remains valid when the $L^p$-norm in the definition of $\mathcal{I}^m_p$ is replaced by the Luxemburg norm of any Orlicz function $\Phi$ with superlinear growth at infinity. This generalization not only extends Federer’s identification problem to a broader functional setting, but also proves particularly meaningful in light of our application to \emph{Vitushkin’s conjecture}. We refer the reader to Subsection~\ref{ss:vitushkin} for a detailed discussion of this connection.

\vspace{3mm}
\textbf{Challenges in the proof of Theorem~\ref{t:fedpronew}} The true difficulty in understanding Question (Q.1) lies in the case of sets with positive Favard length but non $\sigma$-finite $\mathcal{H}^m$ measure. In fact, as observed in \cite[Subsection 3.3.16]{fed1}, any counterexample to (Q.1) must involve a compact set $E \subset \mathbb{R}^n$ with $\mathrm{Fav}(E) > 0$, yet whose integral geometric measure is entirely diffuse with respect to $\mathcal{H}^m$. That is,
\[
\text{Fav}(E) > 0 \quad \text{and} \quad \Theta^m(\mathcal{I}^m_p \restr E, x) = 0 \quad \text{for all } x \in E,
\]
where $\Theta^m(\mu,x)$ denotes the $m$-dimensional Hausdorff density of the measure $\mu$ evaluated at $x$.

\vspace{4mm}

To clarify the difficulties involved in proving Theorem~\ref{t:fedpronew}, we begin by recalling a consequence of Besicovitch’s key three alternatives lemma (see, e.g. \cite[Theorem 3.3.4]{fed1}). Given a finite measure $\mu$ on $\mathbb{R}^n$, this result allows one to decompose a purely $(\mu,m)$-unrectifiable $F \subset \mathbb{R}^n$ (see Definition \ref{d:unrect}) into three pairwise disjoint subsets,
\[
F = F_1(V) \cup F_2(V) \cup F_3(V),
\]
in a way that depends on the $m$-plane $V \in \text{Gr}(n,m)$ and satisfies, for almost every $V$:
\begin{align}
\label{e:(1)}
    & \ \ \ \ \ \mu(F_1(V)) = 0 \\
    \label{e:(2)}
    &\mathcal{H}^m\big(\pi_V(F_2(V))\big) = 0 \\
    \label{e:(3)}
      x \in F_3(V) \ \Rightarrow \ &x \in \text{Closure} \, \{z \ | \ z-x \in V^\perp  \cap F\}.
\end{align}

Now recall that, to achieve \eqref{e:fedpronew}, it suffices to show that for every purely unrectifiable set $F \subset E$ the quantity $\mathcal{H}^m(\pi_V(F))$ vanishes for almost every $m$-plane $V$. Condition \eqref{e:(2)} directly yields the desired result for $F_2(V)$. As for $F_3(V)$, condition \eqref{e:(3)} also leads to $\mathcal{H}^m(\pi_V(F_3(V))) = 0$, since any set with finite $\mathcal{I}^m_1$-measure intersects a typical affine $(n-m)$-plane in only finitely many points. The real difficulty lies in dealing with $F_1(V)$, which relates to condition \eqref{e:(1)}. 

 Since the class of Hausdorff-negligible sets is stable under Lipschitz maps, condition \eqref{e:(1)} immediately implies the desired vanishing of the projection for $F_1(V)$ when $\mu$ coincides with $\mathcal{H}^m$. However, such stability generally fails for $\mathcal{I}^m_1$-negligible sets, as Mattila shown with a counterexample disproving one implication of Vitushkin’s conjecture \cite{mat3}. More concretely, in our context, condition \eqref{e:(1)} ensures that $\mathcal{H}^m(\pi_{V'}(F_1(V))) = 0$ for almost every $m$-plane $V'$, but provides no information about the projection onto $V$ itself. Specifically, it does not imply the “diagonal” estimate
\[
\mathcal{H}^m\big(\pi_V(F_1(V))\big) = 0, \ \ \text{ for almost every $m$-planes $V$}.
\]
This subtle gap is precisely where the main difficulty in understanding problem (Q.1) arises.

The case $p = \infty$ is special because the quantity $\mathrm{Fav}_\infty(E)$ controls the projections:
\begin{equation}
\label{e:ineqiinfty}
\mathrm{Fav}_\infty(E) \geq \mathcal{H}^m(\pi_V(E)), \ \ \text{ for almost every $m$-planes $V$},
\end{equation}
a condition called in \cite{mic} the \emph{weak projection inequality}. It can be proved that such a condition leads to a particular property of the measure $\mathcal{I}^m_\infty$ (see \cite[Theorem 3.3.14]{fed1}): there exists a negligible set of $m$-planes outside of which \emph{every} $\mathcal{I}^m_\infty$-negligible set $E$ satisfies $\mathcal{H}^m(\pi_V(E)) = 0$. The key point is that this exceptional set of planes is independent of the particular set $E$. Since inequality \eqref{e:ineqiinfty} is specific to the case $p = \infty$, a completely different strategy is required for general $p$. 

\vspace{3mm}

\textbf{Novelties in our approach}. To address problem (Q.1), a natural first step is to take into account the multiplicity of intersections between $E$ and all affine $(n-m)$-planes in $\mathbb{R}^n$. More precisely, for each $V \in \mathrm{Gr}(n,m)$, we define a measure $\mu_V$ on $\mathbb{R}^n$ by
\[
\mu_V(B) := \int_V \#(E \cap B \cap \pi_V^{-1}(y)) \, d\mathcal{H}^m(y), \quad \text{for every Borel set } B \subset \mathbb{R}^n.
\]  
Observe that formula \eqref{e:intgeodirectdef} shows that $\mathcal{I}^m_1$ is obtained by averaging the family of measures $\mu_V$ over the Grassmannian $\mathrm{Gr}(n,m)$.

The first key observation is that the measure $\mathcal{I}^m_1$ admits a natural \emph{lifting} to a measure $\hat{\mathcal{I}}^m$ on the product space $\mathbb{R}^n \times \mathrm{Gr}(n,m)$, which simultaneously encodes both spatial and directional information.

 More precisely, define 
\[
\begin{split}
&\zeta(B):= \int_{\Lambda} \mu_V(B) \, d \gamma_{n,m}, \ \ \text{$B \subset \mathbb{R}^n$ Borel} \\
&\hat{\zeta}(A):= \int_{\Lambda} \mu_V(A_V) \, d \gamma_{n,m}, \ \ \text{$A \subset \mathbb{R}^n \times \text{Gr}(n,m)$ Borel}.
\end{split}
\]
Let $\text{Car}_\zeta$ and $\text{Car}_{\hat{\zeta}}$ denote the Carathéodory constructions of $\mathcal{I}^m_1$ and $\hat{\mathcal{I}}_m$ from the set functions $\zeta$ and $\hat{\zeta}$ over Borel coverings of $\mathbb{R}^n$ and $\mathbb{R}^n \times \mathrm{Gr}(n,m)$, respectively. Then the following diagram commutes:
\begin{displaymath}
  \xymatrix{
    {(\mu_V)_{\text{Gr}(n,m)}} \ar[rr]^{\text{Car}_{\hat{\zeta}}} \ar[dr]^{\text{Car}_{\zeta}}
    && {\mathcal{M}(\mathbb{R}^n \times \text{Gr}(n,m))} \ar[dl]^{\pi_{1\sharp}} \\
    & {\mathcal{M}(\mathbb{R}^n)}
  }
\end{displaymath}
which is, the measure $\mathcal{I}^m_1$ can be seen as the marginal of $\hat{\mathcal{I}}^m$ via the projection $\pi_1 \colon \mathbb{R}^n \times \text{Gr}(n,m) \to \mathbb{R}^n$ onto the first component: 
\[
\pi_{1,\sharp}\hat{\mathcal{I}}_m=\mathcal{I}^m_1.
\]
 The construction of the measures $\mathcal{I}^m_1$ and $\hat{\mathcal{I}}_m$ from  set functions defined as averages of Borel regular measures (unlike in the case of Favard length) is now crucial, as it enables the application of the \emph{Disintegration Theorem} \ref{t:disthm}. Indeed, the measure $\hat{\mathcal{I}}_m$ encodes essential information about $\mathcal{I}^m_1$ via disintegration with respect to the projection $\pi_1$:
\begin{equation}
\hat{\mathcal{I}}_m = \eta_x \otimes \pi_{1 \sharp}\hat{\mathcal{I}}_m, \quad \text{$\eta_x$ probability measure on $\text{Gr}(n,m)$ for $\pi_{1 \sharp}\hat{\mathcal{I}}_m$-a.e. $x \in \mathbb{R}^n$}.
\end{equation}
Assuming that a set $E$ has finite $\mathcal{I}^m_p$-measure for some $p > 1$, it can be shown that for $\pi_{1\sharp}\hat{\mathcal{I}}_m$-almost every $x$, \emph{the measure $\eta_x$ is absolutely continuous with respect to $\gamma_{n,m}$} (see Proposition \ref{p:keyprop}). This regularity allows one to treat $\hat{\mathcal{I}}_m$ essentially as a product measure and to apply a version of Fubini’s theorem: the order of integration over $x \in \mathbb{R}^n$ and $V \in \mathrm{Gr}(n, m)$ may effectively be interchanged. 

This framework allows one to reinterpret condition \eqref{e:(1)}, which is a constraint on the product measure $\mu \otimes \gamma_{n,m}$, directly in terms of the measure $\hat{\mathcal{I}}_m$. This reformulation enables estimates to be transferred between the base and fiber variables in the product space. It is worth emphasizing that the absolute continuity of the conditional measures $\eta_x$ is not only sufficient for the rectifiability conclusion of Theorem~\ref{t:fedpronew}, but also necessary. Indeed, for the purely unrectifiable set $E \subset \mathbb{R}^2$ with $0 < \mathcal{I}^1_1(E) < \infty$ constructed in Mattila’s counterexample \cite{mat2}, the corresponding disintegration must involve conditional measures $\eta_x$ that possess a nontrivial singular part for almost every $x \in E$.

This strategy is supported by a structure theorem (see Theorem \ref{t:structure}) that departs from the classical one mentioned earlier and is designed instead to address the specific analytic features of the problem at hand.

 \subsection{Vitushkin's conjecture} 
   \label{ss:vitushkin}
   
In the 1960s, Vitushkin \cite{vit} conjectured that a compact planar set is non-removable, i.e., has positive analytic capacity, if and only if its orthogonal projections have positive length in a set of directions of positive measure, or equivalently, if it has positive Favard length . Significant progress has been made on this conjecture in the case of $\sigma$-finite sets: Calderón~\cite{cal} established the “if” direction, while David~\cite{dav} solved the “only if” direction for finite sets, and later Tolsa~\cite{tol1} completed the $\sigma$-finite case by proving the semi-additivity of analytic capacity. However, Vitushkin’s conjecture does not hold in full generality, as demonstrated by Mattila~\cite{mat1} and later by Jones and Murai~\cite{jonmur} (see also~\cite{joymor}). The latter authors constructed a compact set with strictly positive analytic capacity, but zero Favard length. This leaves open the fundamental question:

\[
\text{\textbf{Vitushkin (Q.2)}: \emph{Does positive Favard length generally imply positive analytic capacity?}}
\]

\vspace{2mm}
Over the past decade, renewed interest in the conjecture has led to notable advances, which can be classified into multi-scale and single-scale results.

\vspace{3mm}

\emph{Multi-scale results.} Closely related to this approach are results that quantify Besicovitch-Federer’s projection theorem in terms of multi-scale conditions on Favard length \cite{dav2}. A breakthrough result by Orponen \cite{orp1} resolves a key conjecture of David-Semmes in the setting of Ahlfors-regular sets working under the notion of \emph{plenty of big projections} (PBP) conditions (see also \cite{martink,dab3}). D\k{a}browski \cite{dab2} recently addressed an open problem posed in \cite{orp1}, obtaining, broadly speaking, a single-scale version of Orponen’s result. This work relies on a more general condition called uniformly large Favard length (ULFL), which requires Favard length to remain uniformly comparable to the radius at every scale and location:
\[
\text{Fav}(K \cap B_r(x)) \gtrsim_C r \quad \text{for all } x \in K \text{ and } 0 < r < \text{diam}(K).
\]
This condition provides a quantitative notion of rectifiability via big pieces of Lipschitz graphs (see \cite[Subsection 1.1]{dab2}). D\k{a}browski's result not only refines Besicovitch’s projection theorem quantitatively but also sheds new light on Vitushkin’s conjecture under the ULFL assumption, improving on earlier work with Villa \cite{dabvil}, which imposed the stronger PBP condition. In \cite{dabvil}, Orponen’s quantitative rectifiability result was used as a black box to show that a Frostmann measure constructed by the authors satisfies a certain flatness condition (see \cite[Section 3]{dabvil}). In contrast, D\k{a}browski’s quantitative rectifiability result allows the author to directly prove the desired flatness condition under the weaker ULFL assumption (see \cite[Corollary 1.11]{dab2} and \cite[Remark 1.9]{dabvil}). In both works, the strategy relies on a powerful technique (see \cite{davsem}): a corona decomposition of the David–Christ lattice into disjoint trees, each supporting a measure that is approximately Ahlfors regular on its tree. In particular, \cite{dab2} proves that the analytic capacity admits a lower bound in terms of the diameter of the set, thereby providing a positive answer to (Q.2) under the ULFL assumption.

\vspace{3mm}

\emph{A single-scale result.} While multi-scale methods rely on uniform geometric regularity across scales, single-scale approaches relax these requirements by focusing on integrability conditions for projections of measures. Among the results in the second class, of particular relevance is the work by Chang and Tolsa \cite{chatol}, which affirms (Q.2) under stronger integrability conditions on the projections of measures carried by $ K $, while relaxing geometric constraints. Specifically, \cite{chatol} assumes the existence of a measure $\mu$ on $K$ such that, when projected onto a line $\ell$, the resulting measure $\pi_\ell \mu$ is absolutely continuous with respect to the one-dimensional Hausdorff measure $\mathcal{H}^1$, and satisfies the following integrability condition:
\begin{equation}
\label{e:chatol}
0 < \int_{\Lambda} \|(\pi_\ell \mu)\|_2^2 \, d\gamma_{2,1}(\ell) < \infty,
\end{equation}
where $\Lambda \subset \text{Gr}(2,1)$ forms an interval in the space of lines through the origin. In the expression above, $\|\cdot\|_2$ denotes the $L^2$-norm of the absolutely continuous part of the measure $\pi_\ell \mu$ with respect to $\mathcal{H}^1$. The inequality in the right-hand side of \eqref{e:chatol} implicitly imposes a positive Favard length condition on $K$, as can be seen from the estimate 
\[
\text{Fav}(K) \gtrsim \frac{1}{\int \|\pi_\ell \mu\|_2^2 \, d\gamma_{2,1}}.
\]
The authors establish analytic capacity estimates using curvature of measures and conical Riesz potentials in connection with corona decompositions. This method yields quantitative lower bounds on analytic capacity providing, in particular cases, a positive answer to (Q.2). It is worth noting that this result does not imply the aforementioned multi-scale result, nor does the converse hold.

\vspace{3mm}

\subsection*{Two new results toward Vitushkin’s conjecture}
A striking consequence of Theorem~\ref{t:fedpronew} is its deep connection with Vitushkin's conjecture. While the formulation of Theorem~\ref{t:fedpronew} already highlights the geometric principles underlying this connection, the argument we develop toward Vitushkin's conjecture actually relies on a slightly more general version of the theorem, which is established later in the paper as Theorem~\ref{t:central}. 

We now present two qualitative new results addressing (Q.2) within natural class of sets lying beyond the $\sigma$-finite framework: those in the complex plane having finite integral geometric measure. This approach introduces a novel perspective grounded in integral geometry, departing from previously considered techniques. 
\vspace{3mm}

\emph{Multi-scale result}. 
Our first result is a multi-scale criterion that extends previous ULFL-type conditions to the broader class of sets with finite integral geometric measure. To state it precisely, we introduce the notion of $\text{Fav}_\Phi$, defined as in \eqref{e:favp}, where the usual $L^p$-norm is replaced by the Orlicz norm associated with a given Orlicz function $\Phi \colon [0,\infty) \to [0,\infty)$ (see \eqref{e:phinorm}). The $L^p$ case appears as a special instance of this more general framework. This approach allows us to account for all possible super-linear growth behaviors at infinity, leading to a sharper result. We refer the reader to Subsection~\ref{ss:orlicz} for the background on the theory of Orlicz functions relevant to this work, and to Subsection~\ref{ss:msresult} for the proof of the result stated below.

\begin{theorem}
\label{t:multiscaleintro}
    Let $K \subset \mathbb{C}$ be a compact set with $\mathcal{I}^1_1(K) < \infty$. Assume that for every $x \in K$ there exist scales $r_{x,i} \searrow 0$ such that
    \begin{equation}
    \label{e:msfav1intro}
        \emph{Fav}(K \cap B_{r_{x,i}}(x)) >  k_x \, \emph{Fav}_{\Phi}(K \cap B_{r_{x,i}}(x)), \ \ \text{ for every $i=1,2,\dotsc$},
    \end{equation}
    for some constant $k_x$ that might depend on $x \in K$ and for some Orlicz function $\Phi$ with superlinear growth at infinity. Then $K$ has strictly positive analytic capacity.
\end{theorem}

Theorem \ref{t:multiscaleintro} can be derived from Theorem~\ref{t:fedpronew} together with Calderón's work~\cite{cal}. 
Notably, the comparability condition in~\eqref{e:msfav1intro} between the classical Favard length and its generalized counterpart represents a substantial step forward in the understanding of Vitushkin's conjecture, yielding a sharper result for sets with finite integral geometric measure. Indeed, we point out that the comparison required in our theorem applies only to a particular sequence of scales, which may vary from point to point. Beyond this, the key advancement lies in the structural nature of our assumption, which we now turn to examine.

To explain this, we define
\[
\theta_K(\ell,x,r) := \frac{\mathcal{H}^1\big(\pi_\ell(K \cap B_r(x))\big)}{r}, \quad \text{for } (\ell,x,r) \in \mathrm{Gr}(2,1) \times K \times (0,\infty).
\]

The interesting regimes, which cannot be treated under the ULFL condition, are precisely those in which $\theta_K(\ell,x,r) \to 0$ as $r \to 0^+$ for almost every lines $\ell \in \text{Gr}(2,1)$. For instance, consider the following schematic asymptotic behavior of the Favard profile at $x$, in which $\theta_K(\cdot,x,r)$ takes only two values for each $r >0$:
\begin{equation}
\theta_K(\ell,x,r) =
\begin{cases}
    1  & \text{if } \ell \in \Lambda_r \subset \mathrm{Gr}(2,1), \\
    c_r & \text{if } \ell \in \mathrm{Gr}(2,1) \setminus \Lambda_r,
\end{cases}
\end{equation}
for a family of subsets $(\Lambda_r)_{r > 0}$ of $\mathrm{Gr}(2,1)$ and a family of constants $(c_r)_{r > 0}$ satisfying
\begin{equation}
\label{e:constraintmsintro}
 c_r = o(\gamma_{2,1}(\Lambda_r)), \quad \text{as $r \to 0$}.
\end{equation}

In this regime, one can always construct an Orlicz function $\Phi$ with superlinear growth at infinity such that \eqref{e:msfav1} holds (see the end of Subsection~\ref{ss:msresult}). 

We observe that, in the above example, the rate at which $c_r$ converges to zero governs the admissible decay rate of $\gamma_{2,1}(\Lambda_r)$. For instance, if $c_r \equiv 0$ for all sufficiently small $r$, then condition \eqref{e:constraintmsintro} forces the function $\theta_K(\cdot, x, r)$ to be almost everywhere identically zero. This scenario is particularly relevant in relation to the set $K$ constructed in Mattila’s counterexample. Indeed,  by construction, the set $K$ admits a sequence of finite coverings $\mathcal{U}(n)$, each consisting of parallelograms whose diameters shrink to zero as $n \to \infty$, and satisfying
\[
\mathcal{H}^1\big(\pi_\ell(K \cap P)\big) = 0, \quad \text{for every } P \in \mathcal{U}(n),
\]
for every line $\ell \in \mathrm{Gr}(2,1) \setminus \Lambda_n(P)$. As the scale decreases, $K \cap P$ becomes invisible from an increasing range of directions, that is, its projections vanish on larger and larger sets of lines. The exceptional sets $\Lambda_n(P)$ still have small but nonzero $\gamma_{2,1}$-measure, converging to zero as $n \to \infty$ (see \cite[Section 5]{mat2}). Consequently, condition~\eqref{e:constraintmsintro} fails, illustrating that the loss of comparability in~\eqref{e:msfav1intro} is tied to this type of degenerate decay.

We conclude this part by noting that our result draws attention to critical regimes in the Favard profile, specifically, those in which the comparability of Favard lengths breaks down for growth rates exceeding linear behavior at infinity. This suggests that gaining insight into Vitushkin's conjecture may depend on understanding the behavior in this delicate range.

   \vspace{3mm}
   
   \emph{Single-scale result}. The derivation of our single-scale result, in connection with Vitushkin's conjecture, involves establishing the first extension of the Besicovitch–Federer projection theorem to non–$\sigma$-finite sets. Since this result holds in arbitrary dimensions and codimensions, we fix two integers $0 < m < n$ for its formulation. The relevant class of sets we consider consists of those $E \subset \mathbb{R}^n$ that intersect a typical affine $(n-m)$-plane in $\mathbb{R}^n$ in finitely many points. Here, the term ``typical'' refers to any isometrically invariant measure on the space of all affine $(n-m)$-planes in $\mathbb{R}^n$ (see, for instance, \cite[Chapter 3]{mat1}). Our main result reads as follows; the proof can be found in Subsection~\ref{ss:ssresult}.

\begin{theorem}[Extended Besicovitch–Federer projection theorem]
\label{t:besfedintro}
    Let $E \subset \mathbb{R}^n$ be a Borel set intersecting a typical affine $(n-m)$-plane into finitely many points. Then, there exists a countably $m$-rectifiable Borel set $R \subset \mathbb{R}^n$ such that the purely $(\mathcal{H}^m,m)$-unrectifiable set $E \setminus R$ satisfies
    \begin{equation}
    \label{e:spproperty}
        \pi_{V \sharp}\big(\mu \restr (E \setminus R)\big) \, \bot \, \mathcal{H}^m, \ \ \text{ for $\gamma_{n,m}$-a.e. $V \in \emph{Gr}(n,m)$,}
    \end{equation}
    whenever $\mu$ is a finite measure. 
\end{theorem}
Clearly, any set with finite $m$-dimensional integral geometric measure satisfies the finite slicing condition required by the hypothesis of the above theorem. Hence, the result stated above asserts, in particular, that within the class of sets with finite integral geometric measure, purely unrectifiable sets can still be detected via orthogonal projections, provided one allows projecting not the set itself, but every measure supported on it. This is, in a sense, a sharp result, as Mattila's previously discussed counterexample provides a purely unrectifiable set $K \subset \mathbb{R}^2$ such that $0 < \mathcal{I}^1_1(K) < \infty$. 

We limit ourselves here to observing that the proof of this result relies on a general rectifiability criterion for measures admitting an integral geometric representation, which we introduce in Section~\ref{s:rectintgeo}.

Coming back to Vitushkin's conjecture, we see that Theorem~\ref{t:besfedintro} together with Calder\'on's result, immediately implies the following corollary, whose proof is presented in Subsection \ref{ss:ssresult}:

\begin{corollary}
\label{c:singlescale}
    Let $K \subset \mathbb{C}$ be a compact set intersecting a typical line into finitely many points. Then, if there exists a finite measure $\mu$ supported by $E$ such that
    \begin{equation}
    \label{e:ssfav1}
    \pi_{\ell \sharp} \mu^a \neq 0, \ \ \text{ for every $\ell \in \Lambda \subset \emph{Gr}(2,1)$ with $\gamma_{2,1}(\Lambda)>0$},
    \end{equation}
    then $K$ has strictly positive analytic capacity.
\end{corollary}

In the above corollary, the symbol $\pi_{\ell \sharp} \mu^a$ denotes the absolutely continuous part of the projected measure $\pi_{\ell \sharp} \mu$ with respect to $\mathcal{H}^1$. This single-scale result, valid among sets satisfying a finite slicing condition, refines the one presented in \cite{chatol} in two key respects: first, the set of directions $\Lambda$ is only required to have positive measure, without any regularity assumptions; second, the $L^2$-integrability condition on the projection is relaxed to the weaker requirement that its absolutely continuous part be nontrivial.

\subsection{Our central result and plan of the paper}
\label{ss:central-plan}
All the results presented above ultimately derive from a general rectifiability criterion for a class of measures that admit an integral geometric representation (Theorem~\ref{t:central}). While the definition of this class requires a more technical setup, we defer the precise statement to Section~\ref{s:rectintgeo}, where the criterion is developed in full detail. Importantly, the main result applies not only to families of orthogonal projections, but also to more general families of maps that satisfy a transversality condition (see Subsection \ref{ss:transversal}). This broader framework allows us to extend the applicability of our results to a wider class of metric spaces, even in the absence of a natural group action or an associated invariant measure, which are typically present in the framework of homogeneous spaces (see, for instance, \cite{hov,hov1,bal}).

\vspace{3mm}

The paper is organized as follows. In Section~\ref{s:notation}, we recall some notation and review classical results relevant to our analysis. Section~\ref{s:structure} establishes a structure theorem for sets, specifically tailored to the needs of our framework (Theorem~\ref{t:structure}). In Section~\ref{s:rectintgeo}, we rigorously define the class of integral geometric measures and present the proof of our main result (Theorem~\ref{t:central}). Section~\ref{s:application} is devoted to the solution of Problem (Q.1). Section~\ref{s:vitushkin} applies our main result to Vitushkin's conjecture. This section is further divided into two subsections, \ref{ss:ssresult} and \ref{ss:msresult}. The first presents an extension of the Besicovitch-Federer projection theorem to sets satisfying a finite slicing condition, along with its single-scale application to Vitushkin's conjecture. The second subsection contains our multi-scale result on Vitushkin’s conjecture, involving the generalized Favard length. Finally, for the reader’s convenience, we have included three appendices that address technical results not essential to the main flow of the paper but necessary for completeness.

\section{Some preliminary tools }
\label{s:notation}
\subsection{Measures and rectifiability}

Let us briefly fix some notation and recall standard results that will be used throughout the paper.

\begin{definition}
Let $X$ denote a set and $2^X$ the family of all subsets of $X$. A measure on $X$ is a set function $\mu \colon 2^X \to [0,\infty]$ satisfying
\begin{enumerate}
    \item $\mu(\emptyset)=0$
    \item $\mu(E) \leq \sum_{i=1}^\infty \mu(E_i)$ whenever $E \subset \bigcup_{i=1}^\infty E_i$.
\end{enumerate}
\end{definition}

If $X$ is a topological space and every Borel set $B \subset X$ is $\mu$-measurable in the sense of Carathéodory, then $\mu$ is called a Borel measure. We say that $\mu$ is \emph{Borel regular} if for every $E \subset X$ there exists a Borel set $B \supset E$ such that $\mu(B) = \mu(E)$. The measure $\mu$ is \emph{Radon} if it is Borel regular and finite on compact sets.

Given another topological space $Y$ and a Borel map $P \colon X \to Y$, we define the \emph{pushforward measure} $P_\sharp \mu$ on $Y$ by
\begin{equation}
\label{e:pushforward}
    P_\sharp \mu(E) := \inf_{\substack{E \subset B \\ B \subset Y \text{ Borel}}} \mu(P^{-1}(B)), \quad \text{for every } E \subset Y,
\end{equation}
and note that $P_\sharp \mu(B) = \mu(P^{-1}(B))$ for Borel sets $B \subset Y$. Moreover, if $\mu$ is Borel, then so is $P_\sharp \mu$.

If $\mu$ is a $\sigma$-additive set function defined on Borel subsets of $X$ with $\mu(\emptyset)=0$, we consider its extension to a Borel regular measure via:
\begin{equation}
    \mu(E) := \inf_{\substack{E \subset B \\ B \subset X \text{ Borel}}} \mu(B), \quad \text{for every } E \subset X.
\end{equation}

We now recall two standard results for Radon measures. The first concerns density estimates (see \cite[Theorem 6.9]{mat1}). We denote by $\Theta^{*m}(\mu,x)$ the upper $m$-density of $\mu$ at $x$.

A fundamental tool we will employ throughout the paper is the disintegration of measures. We collect its main properties below (see \cite[Theorem 5.3.1]{ags}).

\begin{theorem}[Disintegration Theorem]
\label{t:disthm}
Let $P \colon \mathbb{R}^n \to \mathbb{R}^m$ be a Borel map and let $\mu$ be a finite measure on $\mathbb{R}^n$. Then there exists a $P_\sharp \mu$-a.e.\ uniquely determined family $(\eta_y)_{y \in \mathbb{R}^m}$ of probability measures on $\mathbb{R}^n$ such that
\begin{align}
    \label{e:dis1}
    & \eta_y(\mathbb{R}^n \setminus P^{-1}(y)) = 0, \quad \text{for $P_\sharp \mu$-a.e.\ } y \in \mathbb{R}^m, \\
    \label{e:dis0}
    & y \mapsto \eta_y(B) \text{ is Borel measurable for every Borel set } B \subset \mathbb{R}^n, \\
    \label{e:dis2}
    & \int_{\mathbb{R}^n} f(x)\, d\mu(x) = \int_{\mathbb{R}^m} \left( \int_{\mathbb{R}^n} f(x)\, d\eta_y(x) \right) dP_\sharp\mu(y),
\end{align}
for every Borel function $f \colon \mathbb{R}^n \to [0,\infty]$. Moreover, the measure $\eta_y$ can be explicitly computed as
\begin{equation}
    \label{e:dis1.1.1}
    \frac{1}{P_{\sharp} \mu(B_r(y))}\mu \restr P^{-1}(B_r(y)) \rightharpoonup \eta_y \ \ \text{ weakly in the sense of measures as $r \to 0^+$},
\end{equation}
for $P_{\sharp}\mu$-a.e. $y \in \mathbb{R}^m$.
\end{theorem}

\begin{definition}[Measurable family of measures]
\label{d:measfamily}
We say that a family of measures $(\eta_y)$ is measurable whenever condition \eqref{e:dis0} is satisfied. 
\end{definition}

\begin{definition}[Good representative in the disintegration]
\label{d:goodrepdis}
   Let $P \colon \mathbb{R}^n \to \mathbb{R}^m$ be a Borel map and let $\mu$ be a finite measure on $\mathbb{R}^n$. Then, we define for every $y \in \mathbb{R}^m$ the probability measure $\tilde{\eta}_y$ in $\mathbb{R}^n$ as
  \begin{equation*}
  \tilde{\eta}_y:=
   \begin{cases}
     \lim_{r \to 0^+} \frac{1}{P_{\sharp} \mu(B_r(y))}\mu\restr P^{-1}(B_r(y)), \ &\text{ if $y \in \text{supp}(P_{\lambda\sharp}(\mu))$ and the weak limit exists} \\
     0 \ &\text{ otherwise}.
   \end{cases}
   \end{equation*}
\end{definition}

We now recall the definitions of countably $m$-rectifiable sets and purely $(\mu,m)$-unrectifiable sets, which we will use throughout the paper.

\begin{definition}[Countably $m$-rectifiable]
Let $R \subset \mathbb{R}^n$ be a set. We say that $R$ is countably $m$-rectifiable if and only if
\begin{equation}
\label{e:intro7}
R= \bigcup_i f_i(E_i),
\end{equation}
where $E_i \subset \mathbb{R}^m$ are bounded sets and $f_i \colon E_i \to \mathbb{R}^n$ is Lipschitz for every $i=1,2,\dotsc$.
\end{definition}

\begin{definition}[Purely $(\mu,m)$-unrectifiable]
\label{d:unrect}
Let $\mu$ be a measure in $\mathbb{R}^n$. We say that a set $E \subset \mathbb{R}^n$ is purely $(\mu,m)$-unrectifiable if and only if $\mu(\Sigma \cap E) = 0$ for every countably $m$-rectifiable set $R \subset \mathbb{R}^n$. 
\end{definition}

We conclude this subsection with the definition of $m$-rectifiability for measures.

\begin{definition}[Rectifiable measure]
Let $\mu$ be a Radon measure in $\mathbb{R}^n$. We say that $\mu$ is $m$-rectifiable if and only if $\mu \ll \mathcal{H}^m \restr R$ for some countably $m$-rectifiable Borel set $R \subset \mathbb{R}^n$. 
\end{definition}

\subsection{Transversality and cones}
\label{ss:transversal}

The notion of \emph{transversal family of maps} will play a fundamental role along this section. It was originally introduced in \cite[Definition 7.2]{per} for families of maps $P_\lambda \colon X \to \mathbb{R}^m$ parametrized by $\lambda$ and defined on a compact metric space $(X,\text{d})$. For our purposes it will be sufficient to consider the case when $X$ coincides with $\mathbb{R}^n$. Our definition of transversality can be found in \cite{hov} except for the fact that we do not assume lipschitzianity with respect to the spatial variable $x \in \mathbb{R}^n$. 

\begin{definition}
\label{d:transversal}
Let $n,m,l$ be integers satisfying $0 <m < n$, let $\Lambda \subset \mathbb{R}^l$ be open and bounded, and let $P \colon \mathbb{R}^n \times \Lambda \to \mathbb{R}^m$ be continuous map. Setting $P_{\lambda}(x):=P(x,\lambda)$, we say that the family $(P_\lambda)_{\lambda \in \Lambda}$ is transversal if and only if the following conditions hold true.
\begin{enumerate}[(H.1)]
    \item For every $x \in \mathbb{R}^n$ the map $\lambda \mapsto P_\lambda(x)$ belongs to $C^2(\Lambda;\mathbb{R}^m)$ and
    \begin{equation}
    \label{e:h1}
    \sup_{(\lambda,x) \in \Lambda \times \mathbb{R}^n}\|D^j_\lambda P_\lambda(x)\| < \infty, \ \ \text{for }j=1,2.
    \end{equation}
    \item For $\lambda \in \Lambda$ and $x,x' \in \mathbb{R}^n$ with $x \neq x'$ define
    \begin{equation}
        T_{xx'}(\lambda) := \frac{P_\lambda(x) - P_\lambda(x')}{|x-x'|};
    \end{equation}
    then there exists a constant $C' >0$ such that the property
    \begin{equation}
    \label{e:h2}
        |T_{xx'}(\lambda)| \leq C' \ \ \ \text{ implies } \ \ \
        |\text{J}_\lambda T_{xx'}(\lambda)| \geq C'.
    \end{equation}
   \item There exists a constant $C'' >0$ such that 
   \begin{equation}
   \label{e:h3}
     \|D_\lambda T_{xx'}(\lambda)\|, \|D^2_\lambda T_{xx'}(\lambda)\| \leq C'',
   \end{equation}
   for $\lambda \in \Lambda$ and $x,x' \in \mathbb{R}^n$ with $x \neq x'$.
\end{enumerate}
\end{definition}

Here we need also to define cones around the preimages of points with respect to the maps $(P_\lambda)$. 

\begin{definition}[Cone 1]
\label{d:cone1}
Let $\lambda \in \Lambda$, $a \in \mathbb{R}^n$, $0<s<1$, and $r>0$, we define
\begin{align}
 \label{e:cone1}
    &X(a,\lambda,s) := \{x \in \mathbb{R}^n \ | \ |P_\lambda(x) -P_\lambda(a)| < s\, |x-a|  \},\\
    &X(a,r,\lambda,s) := X(a,\lambda,s) \cap \overline{B}_r(a).
\end{align}
\end{definition}
\begin{definition}[Cone 2]
\label{d:cone2}
Let $\lambda \in \Lambda$, $a \in \mathbb{R}^n$, $0<s<1$, and let $V \subset \mathbb{R}^l$ be an $m$-dimensional plane, we define
\begin{align}
\label{e:cone2}
    &L_V(a,\lambda,s) := \{x \in \mathbb{R}^n \ | \ P_{\lambda'}(x)-P_{\lambda'}(a) =0, \ |\lambda'-\lambda| < s, \ \pi_{V^\bot}(\lambda'-\lambda)=0 \},
\end{align}
where $V^\bot$ is the orthogonal to $V$.
\end{definition}

 In the studying of rectifiability, the key property of transversality relies on the equivalence between the two definitions of cones introduced above. This is the content of the following proposition.
 \begin{proposition}
 \label{p:equivcone}
 Let $(P_\lambda)$ be a family of transversal maps and let $a \in \mathbb{R}^n$, $\lambda_0 \in \Lambda$, and $\delta_0 >0$ such that $\overline{B}_{2\delta_0}(\lambda_0) \subset \Lambda$. If we denote by $B:=\{e_1,\dotsc,e_l\}$ an orthonormal basis of $\mathbb{R}^l$ and by $\mathcal{V}_m$ the family of all $m$-dimensional coordinate planes, there exists $c >0$ and $s_0 >0$ such that, for every $\lambda \in \overline{B}_{\delta_0}(\lambda_0)$, $s < s_0$, and $r>0$, the following inclusions hold true 
 \begin{equation}
 \label{e:equicone}
\bigcup_{V \in \mathcal{V}_m} \overline{B}_r(a) \cap L_{V}(a,\lambda,s/c) \setminus \{a\} \subset X(a,r,\lambda,s) \subset \bigcup_{V \in \mathcal{V}_m} \overline{B}_r(a) \cap L_{V }(a,\lambda,c\,s) \setminus \{a\}.
 \end{equation}
 \end{proposition}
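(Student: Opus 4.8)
The plan is to reduce the problem to a quantitative statement about the finite-dimensional family $\lambda \mapsto T_{xx'}(\lambda)$ and then exploit the transversality conditions (H.1)--(H.3). First I would fix $a \in \mathbb{R}^n$, $\lambda_0 \in \Lambda$, and $\delta_0>0$ with $\overline{B}_{2\delta_0}(\lambda_0) \subset \Lambda$, and observe that along the segment $\{\lambda_0\} \times \mathbb{R}^n$ the map $\lambda' \mapsto P_{\lambda'}(x) - P_{\lambda'}(a)$ vanishes at $\lambda'$ with $\pi_{V^\perp}(\lambda'-\lambda)=0$ precisely when $|x-a|\, T_{xa}(\lambda') = 0$, i.e. when $T_{xa}$ vanishes on an affine $m$-slice through $\lambda$ in the direction of a coordinate plane $V$. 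Thus membership of $x$ in $\bigcup_V \overline{B}_r(a) \cap L_V(a,\lambda,s')\setminus\{a\}$ is equivalent to: $x \in \overline{B}_r(a)\setminus\{a\}$ and $T_{xa}$ has a zero on some coordinate $m$-slice of $\overline{B}_{s'}(\lambda)$; while membership of $x$ in $X(a,r,\lambda,s)\setminus\{a\}$ is equivalent to $x \in \overline{B}_r(a)\setminus\{a\}$ and $|T_{xa}(\lambda)| < s$. So both inclusions in \eqref{e:equicone} become, after cancelling the common constraint $x \in \overline{B}_r(a)\setminus\{a\}$, purely statements about the single $C^2$ map $g := T_{xa} \colon \Lambda \to \mathbb{R}^m$, with uniform $C^2$-bounds (from (H.3)) and the transversality alternative (H.2): whenever $|g(\lambda)| \le C'$ then $|\mathrm{J}_\lambda g(\lambda)| \ge C'$.

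The heart of the argument is then the following two implications, to be proved with constants depending only on $C'$, $C''$, $m$, $l$ (hence on the family, not on $x,a,\lambda,r$):
\begin{enumerate}[(i)]
\item (right inclusion) if $|g(\lambda)| < s$ for small enough $s$, then there is a coordinate $m$-plane $V$ and a point $\lambda'$ with $\pi_{V^\perp}(\lambda'-\lambda)=0$, $|\lambda'-\lambda| < c\,s$, and $g(\lambda')=0$;
\item (left inclusion) if there is a coordinate $m$-plane $V$ and $\lambda'$ with $\pi_{V^\perp}(\lambda'-\lambda)=0$, $|\lambda'-\lambda| < s/c$, and $g(\lambda')=0$, then $|g(\lambda)| < s$.
\end{enumerate}
Implication (ii) is the easy direction: it follows immediately from the mean value inequality, $|g(\lambda)| = |g(\lambda) - g(\lambda')| \le \|D_\lambda g\|_\infty |\lambda-\lambda'| \le C'' \, s/c$, so any $c \ge C''$ works. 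Implication (i) is the substantive one. Given $|g(\lambda)|<s$ with $s$ small (so in particular $|g(\lambda)| \le C'$), (H.2) gives $|\mathrm{J}_\lambda g(\lambda)| \ge C'$; this means the differential $D_\lambda g(\lambda) \colon \mathbb{R}^l \to \mathbb{R}^m$ is surjective with a quantitative lower bound on its $m\times m$ minors, so some coordinate $m$-plane $V$ — chosen so that the corresponding $m$ columns of $D_\lambda g(\lambda)$ have determinant of absolute value $\gtrsim C'$ (up to a combinatorial factor $\binom{l}{m}$) — has the property that $D_\lambda g(\lambda)|_V$ is invertible with $\|(D_\lambda g(\lambda)|_V)^{-1}\|$ bounded by a constant. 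Restricting $g$ to the affine slice $\lambda + V$, we have a map from (a ball in) $\mathbb{R}^m$ to $\mathbb{R}^m$ whose value at $\lambda$ has norm $<s$, whose derivative at $\lambda$ is invertible with controlled inverse, and whose second derivative is bounded by $C''$; a quantitative inverse function theorem / Newton-type iteration (or the quantitative implicit function argument) then produces a zero $\lambda'$ of $g$ on this slice within distance $c\,s$ of $\lambda$, provided $s < s_0$ with $s_0$ depending only on the constants. This is exactly the standard surjectivity estimate: if $\|Dg(\lambda)^{-1}\| \le K$ and $\|D^2 g\| \le M$ on $\overline{B}_\rho(\lambda)$ with $\rho \le 1/(2KM)$, then $g(\overline{B}_\rho(\lambda)) \supset \overline{B}_{\rho/(2K)}(g(\lambda))$; applying it with $\rho \asymp s$ hits $0$.

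The only subtlety — and the main obstacle — is keeping the choice of coordinate plane $V$ and all the constants genuinely uniform in $(x,a,\lambda)$: one must pass from the lower bound on the Jacobian $|\mathrm{J}_\lambda g(\lambda)| = |\mathrm{J}_\lambda T_{xa}(\lambda)| \ge C'$ to a lower bound on some fixed $m\times m$ coordinate minor, which costs only a factor depending on $\binom{l}{m}$, and then to invoke the quantitative inverse function theorem with the $C^2$-bound $C''$ from (H.3) valid for all $x \ne a$ and all $\lambda \in \overline{B}_{2\delta_0}(\lambda_0)$. Since (H.3) is a genuinely uniform bound and (H.2) is a uniform alternative, and since $g$ depends on $x$ only through $T_{xa}$, the threshold $s_0$ and the constant $c$ can be extracted depending only on $C'$, $C''$, $m$, $l$. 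Restricting attention to $\lambda \in \overline{B}_{\delta_0}(\lambda_0)$ (rather than all of $\Lambda$) guarantees the slice balls $\overline{B}_{c s}(\lambda) \cap (\lambda+V)$ stay inside $\overline{B}_{2\delta_0}(\lambda_0) \subset \Lambda$ once $c s_0 < \delta_0$, so the iteration in (i) never leaves the domain where the bounds hold. Finally, reintroducing the common constraint $x \in \overline{B}_r(a)\setminus\{a\}$ on both sides and rewriting the zero-set conditions back in terms of $L_V$ yields \eqref{e:equicone}.
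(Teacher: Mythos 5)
Your plan is correct and matches the structure of the paper's own (one-line) proof: the left inclusion from the Lipschitz bound $\|D_\lambda T_{xa}\|\le C''$ of (H.3), the right inclusion by a quantitative inverse-function argument on the restriction of $T_{xa}$ to a well-chosen coordinate $m$-slice through $\lambda$, using the Jacobian lower bound from (H.2) and the second-derivative bound from (H.3). The paper simply cites \cite[Lemma~3.3]{jar} for the right inclusion; your write-up essentially reconstructs that lemma in a self-contained way, with the correct bookkeeping of constants (a factor $\binom{l}{m}$ to pass from $|\mathrm{J}_\lambda T_{xa}|\ge C'$ to a single coordinate minor, $c\gtrsim \max(C'', \text{inverse bound})$, and $s_0$ small enough that the slice ball $\overline{B}_{cs}(\lambda)\cap(\lambda+V)$ stays inside $\overline{B}_{2\delta_0}(\lambda_0)\subset\Lambda$), so it is a valid and fully detailed substitute for the citation.
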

\begin{proof}
The first inclusion follows straightforwardly from the the lipschitzianity of $\lambda \mapsto T_{xx'}(\lambda)$. The second inclusion can be deduced from \cite[Lemma 3.3]{jar}.
\end{proof}

\begin{remark}
\label{r:tratopro}
Since condition \eqref{e:equicone} is local in $\Lambda$, one can work with a family of maps $P_q \colon \mathbb{R}^n \to \mathbb{R}^m$ parametrized by an $\ell$-dimensional manifold $M$, using local charts $(U, \psi)$. To extend the results of this paper to such a setting, it suffices to verify that the family $(P_{\psi^{-1}(\lambda)})_{\lambda \in \psi(U)}$ satisfies Definition \ref{d:transversal}, or, more directly, that the inclusions in \eqref{e:equicone} hold with $\Lambda = \psi(U)$. In this context, the Lebesgue measure $\mathcal{L}^\ell$ should be replaced by (a constant multiple of) the volume measure on $M$.

With this approach, the family of orthogonal projections $(\pi_V)_{V \in \mathrm{Gr}(n,m)}$ can be represented as a finite union of transversal families (see Appendix \ref{a:protra}). In this case, the Lebesgue measure $\mathcal{L}^\ell$ is replaced by the Haar measure $\gamma_{n,m}$ on the Grassmannian $\mathrm{Gr}(n,m)$.  
\end{remark}

\subsection{Orlicz space and $L^1$-weak compactness}
\label{ss:orlicz}

We start by recalling the relevant class of Orlicz functions as well as the associated Orlicz space. For a comprehensive account on this subject we refer to \cite{rya}.

\begin{definition}[Orlicz function]
\label{d:orlicz}
    We say that $\Phi \colon [0,\infty) \to [0,\infty)$ is a Orlicz function, if 
    \begin{enumerate}
        \item $\Phi(0)=0$
        \item $\Phi(t)>0$ if $t>0$ 
        \item $\Phi$ is convex
        \item $\Phi$ is continuous at $0$.
    \end{enumerate}
     Furthermore, we say that $\Phi$ has superlinear growth at infinity if
    \begin{equation}
        \lim_{t \to \infty} \frac{\Phi(t)}{t} = \infty.
    \end{equation}
\end{definition}

\begin{remark}
\label{r:contOrlicz}
    We observe that, since a convex function $\Phi$ is locally Lipschitz on its domain $\text{dom}(\Phi):= \text{int}(\{t \ | \ |\Phi(t)| < \infty \})$, where $\text{int}(\cdot)$ denotes the internal part, we deduce from property (4) that a Orlicz function is continuous on $[0,\infty)$. Moreover, by applying the monotonicity of incremental quotients for convex functions,
\[
\frac{\Phi(t_2) - \Phi(t_1)}{t_2 - t_1} \geq \frac{\Phi(t_1) - \Phi(0)}{t_1}, \quad \text{for all } 0 < t_1 < t_2,
\]
we immediately deduce from (1)-(2) that $\Phi(t_2) \geq \Phi(t_1)$; that is, $\Phi$ is an increasing function.

\end{remark}

Given a Orlicz function $\Phi$ and a finite measure $m$ on some measurable space $X$, we can consider the associated Orlicz vector-space $L_\Phi$ defined as
\begin{equation}
    \bigg\{f \colon X \to \mathbb{R} \ | \ f\text{ is measurable,}\ \int_X \Phi\bigg(\frac{|f|}{c}\bigg) \, dm < \infty \ \text{for some $c>0$} \bigg\}.
\end{equation}
The vector-space $L_\Phi$ can be endowed with the Luxemburg-norm $\|\cdot\|_\Phi$ defined as
\begin{equation}
\label{e:phinorm}
    \|f\|_\Phi := \inf \bigg\{c >0 \ | \ \int_X \Phi\bigg(\frac{|f|}{c}\bigg) \leq 1 \bigg\}, \ \ \text{ for $f \in L_\Phi$}.
\end{equation}

We need the following compactness result.

\begin{proposition}
\label{p:dunford-pettis}
    Let $X$ and $m$ be as above, and let $\Phi$ be a Orlicz function with superlinear growth at infinity. Assume that $(f_r)_{r>0} \subset L_\Phi$ satisfies 
    \[
    \sup_{r>0} \|f_r\|_\Phi < \infty.
    \]
    Then $(f_r)_{r >0}$ is relatively weakly sequentially compact in $L^1(X;m)$.
\end{proposition}

\begin{proof}
 If $\limsup_{r \to 0^+} \|f_r\|_\Phi=0$, then $f_r \to 0$ strongly in $L^1(X;m)$ as $r \to 0^+$. Indeed, thanks to the superlinear growth of $\Phi$, we find $M>0$ such that $\Phi(t) \geq t$ for every $t \geq M$. Therefore, for every $r>0$ we have
\[
\int_{\{f_r \geq (\|f_r\|_\Phi + r)M \}} \frac{|f_r|}{\|f_r\|_\Phi + r} \, dm \leq \int_{\{f_r \geq (\|f_r\|_\Phi + r)M \}} \Phi\bigg(\frac{|f_r|}{\|f_r\|_\Phi + r} \bigg) \, dm \leq 1.
\]
Hence, with the following estimate
\begin{align*}
\int_X |f_r|\, dm &= \int_{\{f_r \geq (\|f_r\|_\Phi + r)M \}} |f_r|\, dm + \int_{\{f_r < (\|f_r\|_\Phi + r)M \}} |f_r|\, dm \\
&\leq \|f_r\|_{\Phi} +r +m(X)(\|f_r\|_\Phi + r)M \\
&= (\|f_r\|_{\Phi} +r)(1+m(X)M),
\end{align*}
we can immediately infer that $f_r \to 0$ in $L^1(X;m)$ as $r \to 0^+$. 

So let us assume that there exists a subsequence such that $\inf_{i} \|f_{r_i}\|_{\Phi} >0$. In this case, it is enough to prove that the sequence $(f_i)_{i}$ defined as $f_i:= f_{r_i}/\|f_{r_i}\|_\Phi$ is relatively weakly sequentially compact in $L^1(X;m)$. Since $\|f_i\|_\Phi=1$ for every $i=1,2,\dotsc$, by recalling that $\Phi$ is increasing (see Remark \ref{r:contOrlicz}), we can write for every $\epsilon$
\[
\int_X \Phi\bigg(\frac{|f_i|}{2} \bigg) \, dm  \leq 1, \text{ for every $i=1,2,\dotsc$}. 
\]
Therefore, by virtue of the superlinear growth assumption on $\Phi$, we can apply Dunford-Pettis's Theorem to deduce that the sequence $(f_i)_i$ is relatively weakly sequentially compact in $L^1(X;m)$. This concludes the proof. 
\end{proof}

\section{A structure theorem}
\label{s:structure}

This section is devoted to the proof of the following structure theorem.

\begin{theorem}[Structure]
\label{t:structure}
Let $(P_\lambda)$ be a family of transversal maps, let $(\sigma_\lambda)$ be measures on $\mathbb{R}^m$ absolutely continuous with respect to $\mathcal{L}^m$, and let $\phi$ be a Borel regular measure on $\mathbb{R}^n$. Given $E \subset \mathbb{R}^n$ $\phi$-measurable with $\phi(E)<\infty$ we have
\begin{equation}
    \label{e:dec}
    E=R \cup (E \setminus R),  
\end{equation}
where $R$ is countably $(\phi,m)$-rectifiable and $\phi$-measurable, and $E \setminus R$ is purely $(\phi,m)$-unrectifiable and $\sigma$-compact. Moreover, if there exists a set $\hat{E} \subset \Lambda \times E$ such that for $\mathcal{L}^l$-a.e. $\lambda \in \Lambda$ we have
\begin{align}
  \label{e:structure1}
  &\sigma_\lambda(P_\lambda(S_\lambda \cap \hat{E}_\lambda))=0\ \text{ whenever } \ S \subset \Lambda \times \mathbb{R}^n \ \text{ and } \
        (\mathcal{L}^l \otimes \phi\restr E) (S)=0 ,\\
    \label{e:structure2}    
   &\mathcal{H}^0(E \cap P^{-1}_\lambda(y))< \infty,  \ \ \text{ for }\sigma_\lambda\text{-a.e. } y \in \mathbb{R}^m,
\end{align}
the following property holds true
\begin{equation}
    \label{e:structure3}
    \sigma_\lambda\big(P_\lambda(\hat{E}_\lambda  \setminus R)\big) =0, \ \ \text{ for $\mathcal{L}^l$-a.e. }\lambda \in \Lambda. 
\end{equation}
\end{theorem}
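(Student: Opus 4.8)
We prove the decomposition \eqref{e:dec} and the projection property \eqref{e:structure3} in turn.

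\emph{The decomposition.} Since $E$ is $\phi$-measurable and $\phi(E)<\infty$, the restriction $\phi\restr E$ is a finite Borel measure on $\mathbb{R}^n$, hence tight, i.e.\ inner regular with respect to compact sets. I would set
\[
\alpha:=\sup\big\{\phi(E\cap R')\ :\ R'\subset\mathbb{R}^n\ \text{is $\sigma$-compact and countably $m$-rectifiable}\big\}\le\phi(E)<\infty ,
\]
take a maximizing sequence $(R'_i)$ and put $R'_\ast:=\bigcup_iR'_i$, which is again $\sigma$-compact and countably $m$-rectifiable and satisfies $\phi(E\cap R'_\ast)=\alpha$. By inner regularity of $\phi\restr E$ choose a $\sigma$-compact set $K\subset E\setminus R'_\ast$ with $\phi\big((E\setminus R'_\ast)\setminus K\big)=0$ and define $R:=E\setminus K$. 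Then $E\setminus R=K$ is $\sigma$-compact; $R\setminus R'_\ast=(E\setminus R'_\ast)\setminus K$ is $\phi$-null, so $R$ is countably $(\phi,m)$-rectifiable; and $K$ is purely $(\phi,m)$-unrectifiable, because a $C^1$ $m$-submanifold $\Sigma$ with $\phi(\Sigma\cap K)>0$ would make $R'_\ast\cup\Sigma$ — which is $\sigma$-compact and countably $m$-rectifiable, as every embedded $C^1$ $m$-submanifold is — satisfy $\phi\big(E\cap(R'_\ast\cup\Sigma)\big)\ge\alpha+\phi(\Sigma\cap K)>\alpha$ (using $K\subset E\setminus R'_\ast$), contradicting the choice of $\alpha$.

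\emph{The projection property: set-up.} Put $F:=E\setminus R$; by the previous step $F$ is $\sigma$-compact, purely $(\phi,m)$-unrectifiable, $\phi(F)<\infty$, and, being contained in $E$, it still satisfies the finiteness of slices \eqref{e:structure2}. Statement \eqref{e:structure3} is a Besicovitch--Federer type projection theorem for $\phi$ and the transversal family $(P_\lambda)$, and I would prove it by contradiction: assume $\sigma_\lambda(P_\lambda(F))>0$ for every $\lambda$ in some $\Lambda'\subset\Lambda$ with $\mathcal{L}^l(\Lambda')>0$; since $\sigma_\lambda\ll\mathcal{L}^m$, this forces $\mathcal{L}^m(P_\lambda(F))>0$ on $\Lambda'$, which is what makes the density argument below available. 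For $\lambda\in\Lambda$ let $G_\lambda$ be the set of $a\in F$ at which $P_\lambda$ is ``$s$-expanding along $F$'' for some $s>0$ — that is, $F\cap\overline{B}_\rho(a)$ is disjoint from the contraction cone $X(a,\rho,\lambda,s)$ for some $\rho>0$ — and put $G:=\{(\lambda,a)\in\Lambda\times\mathbb{R}^n\mid a\in G_\lambda\}$.

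\emph{Cone criterion.} Through the cone equivalence \eqref{e:equicone} of Proposition \ref{p:equivcone} — which identifies the cone $X(a,r,\lambda,s)$, up to fixed constants, with the finite union over coordinate $m$-planes $V$ of the preimage cones $L_V(a,\lambda,cs)$, thereby restoring the linear tangent-cone structure otherwise unavailable for the merely continuous maps $P_\lambda$ — the classical cone criterion for rectifiability shows that each $G_\lambda$ is countably $m$-rectifiable. Since $F$ is purely $(\phi,m)$-unrectifiable we get $\phi(G_\lambda)=0$ for every $\lambda$, hence $(\mathcal{L}^l\otimes\phi\restr E)(G)=0$ by Fubini (note $G_\lambda\subset F\subset E$), and therefore \eqref{e:structure1} applied with $S=G$ yields $\sigma_\lambda(P_\lambda(G_\lambda))=0$ for $\mathcal{L}^l$-a.e.\ $\lambda\in\Lambda$.

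\emph{Density step, the main obstacle.} Using the finiteness of slices \eqref{e:structure2} together with a Lebesgue-density and measurable-selection argument in the target $\mathbb{R}^m$, one shows that for every $\lambda$ with $\mathcal{L}^m(P_\lambda(F))>0$ — in particular for $\lambda\in\Lambda'$ — $\sigma_\lambda$-almost every $y\in P_\lambda(F)$ has a fibre point $x\in F\cap P_\lambda^{-1}(y)$ lying in $G_\lambda$, so that $P_\lambda(F)\subset P_\lambda(G_\lambda)$ up to a $\sigma_\lambda$-null set. Combined with the previous paragraph this gives $\sigma_\lambda(P_\lambda(F))\le\sigma_\lambda(P_\lambda(G_\lambda))=0$ for $\mathcal{L}^l$-a.e.\ $\lambda\in\Lambda'$, which contradicts $\mathcal{L}^l(\Lambda')>0$ together with $\sigma_\lambda(P_\lambda(F))>0$ on $\Lambda'$; hence \eqref{e:structure3} holds. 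I expect this density step to be the hard part: it is the heart of the Besicovitch--Federer mechanism, and here it has to be carried out for an arbitrary Borel regular measure $\phi$ in place of $\mathcal{H}^m$ and for maps that are only continuous in the space variable, which is exactly where the hypotheses \eqref{e:structure1}--\eqref{e:structure2} and the cone equivalence \eqref{e:equicone} enter, the transversality bounds (H.1)--(H.3) being used through the estimate on the level sets of $T_{xx'}$.
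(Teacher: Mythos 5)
Your decomposition argument \eqref{e:dec} is correct and essentially equivalent to the paper's: you maximize $\phi(E\cap R')$ over $\sigma$-compact countably $m$-rectifiable $R'$ and complete by inner regularity, while the paper minimizes $\phi(E\setminus R)$ over countably $(\phi,m)$-rectifiable $R$ and then $\sigma$-compactifies the remainder; both routes are sound.

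The gap is in the projection property, precisely where you flag it. Your argument needs the ``density step'': for $\lambda$ with $\mathcal{L}^m(P_\lambda(F))>0$, $\sigma_\lambda$-a.e.\ $y\in P_\lambda(F)$ has a fibre point in the cone-avoidance set $G_\lambda$, i.e.\ $P_\lambda(F)\subset P_\lambda(G_\lambda)$ modulo $\sigma_\lambda$-null sets. You do not prove this, and it is not a known lemma that can simply be cited; as stated it is essentially as strong as the Besicovitch--Federer mechanism you are trying to establish. Moreover, it is structurally misaligned with how the argument actually has to go: at a point $a$ of the (purely unrectifiable) set $F$ that is not cone-avoiding, there are three a priori behaviours --- the cone density $\limsup_{s\to0}\sup_{r<\delta}(rs)^{-m}\phi(F\cap X(a,r,\lambda,s))$ can be $0$ (without the cone being empty), or $\infty$, or $a$ can be an accumulation point of its fibre $F\cap P_\lambda^{-1}(P_\lambda(a))$ --- and nothing forces the fibre through $y$ to contain a point where $F$ literally misses a cone. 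So there is no reason $P_\lambda(F)$ should be swallowed by $P_\lambda(G_\lambda)$, and you would have to prove something much finer.

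The paper's proof sidesteps this by replacing cone avoidance with a cone-density trichotomy, proved for \emph{almost every} $\lambda$ (not for all). Proposition \ref{p:keycond} shows, via Proposition \ref{p:equivcone} and a pushed-forward density dichotomy on the parameter $m$-planes $V_\sigma$ (essentially \cite[Theorem 18.5]{mat1} applied to the measures $\psi_\sigma$), that for each $a$ and $\mathcal{L}^l$-a.e.\ $\lambda$ one of \eqref{e:cond1}--\eqref{e:cond3} holds. Tonelli (Lemma \ref{l:conddelta}) then gives: for a.e.\ $\lambda$, for $\phi\restr E$-a.e.\ $a$, one of \eqref{e:conddelta1}--\eqref{e:conddelta3} holds. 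The set of $a$ where none holds is $(\mathcal{L}^l\otimes\phi\restr E)$-null, hence killed by \eqref{e:structure1}; the zero-density set $E_{1,\delta}(\lambda)\cap E$ is $\phi$-null by Proposition \ref{p:e1delta} (this is where pure unrectifiability and Lemma \ref{l:geometrical} enter), hence also killed by \eqref{e:structure1}; the infinite-density set projects to an $\mathcal{L}^m$-null set by Proposition \ref{p:e2delta}, hence to a $\sigma_\lambda$-null set since $\sigma_\lambda\ll\mathcal{L}^m$; and the fibre-accumulation set projects into the $\sigma_\lambda$-null set of $y$ with infinite fibre, using \eqref{e:structure2}. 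Summing the four pieces via \eqref{e:structure13} gives \eqref{e:structure3} without ever needing your ``density step.'' So the missing piece is not a technical detail to be filled in: you need to replace your $G_\lambda$-based scheme with an exhaustive density trichotomy of the kind in Proposition \ref{p:keycond}, together with the separate handling of each alternative.
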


The proof of the above theorem relies on several intermediate propositions. Since their proofs follow from techniques developed in \cite{fed1} and \cite{hov}, and are not essential for the understanding of the central result, we choose to state them without proof and defer their detailed verification to Appendix \ref{a:structure}. In what follows, we provide only the proof of Theorem~\ref{t:structure}.

Throughout the remainder of this section, we implicitly assume that $(P_\lambda)$ and $\phi$ satisfy the hypotheses of Theorem~\ref{t:structure}.

\begin{proposition}
\label{p:e1delta}
Let $E \subset \mathbb{R}^n$ be a purely $(\phi,m)$-unrectifiable Borel set  with $\phi(E)<\infty$ and let $\delta >0$ and $\lambda \in \Lambda$. Let the set $E_{1,\delta}(\lambda)$ be defined as
\begin{equation}
    \label{e:e1delta}
    E_{1,\delta}(\lambda) := \{a \in \mathbb{R}^n \ | \  \limsup_{s \to 0^+} \, \sup_{0<r<\delta} (rs)^{-m}\phi(E \cap X(a,r,\lambda,s))=0\}.
\end{equation}
Then $\phi(E_{1,\delta}(\lambda) \cap E)=0$.
\end{proposition}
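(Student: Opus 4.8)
The plan is to show that $E_{1,\delta}(\lambda) \cap E$ is itself purely $(\phi,m)$-unrectifiable (being a subset of $E$) but that the density hypothesis defining $E_{1,\delta}(\lambda)$ forces, via a covering/density argument, that $\phi$ restricted to this set has vanishing upper $m$-density everywhere on it with respect to an appropriate comparison measure, and then to derive a contradiction unless the $\phi$-measure is zero. More concretely, I would argue as follows. Suppose for contradiction that $\phi(E_{1,\delta}(\lambda) \cap E) > 0$ and set $A := E_{1,\delta}(\lambda) \cap E$. The idea is to build a countably $m$-rectifiable set (in fact a piece of a $C^1$ graph, or a Lipschitz image of a subset of $\mathbb{R}^m$) that carries positive $\phi$-measure of $A$, contradicting pure unrectifiability.

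First I would use Proposition \ref{p:equivcone} to replace the cone $X(a,r,\lambda,s)$ by the union over coordinate $m$-planes $V \in \mathcal{V}_m$ of the sets $\overline{B}_r(a) \cap L_V(a,\lambda,cs)$; since $\mathcal{V}_m$ is finite, at a fixed point $a \in A$ the defining condition $\limsup_{s\to 0^+}\sup_{0<r<\delta}(rs)^{-m}\phi(E\cap X(a,r,\lambda,s))=0$ transfers (up to the constant $c^m$) to each $L_V$-cone separately. Next, for each $a \in A$ and each $V$, the set $L_V(a,\lambda,cs)$ is a ``curved $m$-dimensional tube'' through $a$: it is the preimage, under the $m$-dimensional parameter slice $\lambda' \mapsto P_{\lambda'}(x) - P_{\lambda'}(a)$, of $0$. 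One then decomposes $A$ according to the ``directions'' determined by these tubes. The key quantitative point is that the vanishing of the $(rs)^{-m}$-normalized $\phi$-mass of the thin cone means that $\phi$ is concentrated, near $a$, away from every such cone. Heuristically this says that locally, at every point $a \in A$, the measure $\phi\restr A$ ``sees'' the complement of a cone — which is exactly the geometric situation that lets one cover $\phi\restr A$ by finitely many Lipschitz $m$-graphs and hence obtain a positive-measure rectifiable piece. This is the analogue, in the transversal-family setting, of the classical Besicovitch argument where a set with no approximate tangent plane but nonzero density in cone-complements turns out to be rectifiable.

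The cleanest way to organize the contradiction is: fix small $\varepsilon > 0$; by Egorov-type reasoning restrict to a subset $A' \subset A$ of positive $\phi$-measure on which the $\limsup$ defining $E_{1,\delta}(\lambda)$ is uniform, i.e. there is $s_1 > 0$ with $\sup_{0<r<\delta}(rs)^{-m}\phi(E \cap X(a,r,\lambda,s)) \le \varepsilon$ for all $a \in A'$ and all $s < s_1$. Then further restrict (using a Vitali/Besicovitch covering of $A'$ by small balls) to a piece $A'' \subset A'$ contained in a ball $\overline{B}_{r_0}(a_0)$ of radius $r_0 < \min(\delta, s_1)$ with $\phi(A'') > 0$. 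Now for any two points $a, x \in A''$ with $a \ne x$: either $x \notin X(a,\lambda,s)$, meaning $|P_\lambda(x) - P_\lambda(a)| \ge s|x-a|$ — i.e. the "slope" $|T_{ax}(\lambda)| \ge s$ — or $x$ lies in a cone whose $\phi$-mass is controlled. Choosing $s$ judiciously and using the uniform smallness, one shows the "bad" pairs contribute negligibly, so that after discarding a $\phi$-null subset, $A''$ becomes a set on which $x \mapsto P_\lambda(x)$ is bi-Lipschitz-from-below (an $s$-separation of slopes), hence $P_\lambda|_{A''}$ is injective with Lipschitz inverse onto its image in $\mathbb{R}^m$. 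This makes $A''$ a Lipschitz image of a subset of $\mathbb{R}^m$, i.e. countably $m$-rectifiable, with $\phi(A'') > 0$ — contradicting that $A \subset E$ is purely $(\phi,m)$-unrectifiable. Hence $\phi(E_{1,\delta}(\lambda) \cap E) = 0$.

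The main obstacle I anticipate is the passage from "small $\phi$-mass in every thin cone around every point" to "bi-Lipschitz slope separation on a positive-measure subset." Controlling the bad pairs $(a,x)$ where $x$ sits in a nontrivial cone around $a$ requires a careful double-counting: one needs to integrate the cone estimate $(rs)^{-m}\phi(E \cap X(a,r,\lambda,s)) \le \varepsilon$ against $d\phi(a)$ over $A''$ and use Fubini, together with the fact that $X(a,\lambda,s)$ is "symmetric enough" in $a,x$ (via the $T_{ax}$ formulation) so that $x \in X(a,\lambda,s)$ is comparable to $a \in X(x,\lambda,s)$. The finiteness $\phi(E) < \infty$ is what makes these integrals meaningful and lets one choose the parameters ($\varepsilon$, then $s$, then $r_0$) in the right order. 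The role of transversality here is precisely Proposition \ref{p:equivcone}, which guarantees the cones have the right $m$-dimensional structure (coordinate $L_V$-tubes) so that cone-complement density estimates are the correct notion; without it the cones could be degenerate and the rectifiability conclusion would fail.
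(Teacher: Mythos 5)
Your overall strategy is genuinely different from the paper's, and the place where it differs is exactly where it breaks down. The paper proves Proposition~\ref{p:e1delta} by applying Lemma~\ref{l:geometrical} (a transversal-family version of Federer's Lemma 3.3.6): the vanishing cone density at $a$, together with the pure unrectifiability of $E$, is converted into a vanishing \emph{tube} density, i.e. into $\phi(E' \cap P_\lambda^{-1}\overline{B}_\rho(P_\lambda(a)))=o(\rho^m)$. That is a pointwise statement that the pushforward $P_{\lambda\sharp}(\phi\restr E')$ has zero upper $m$-density at $P_\lambda(a)$; the density estimates for Radon measures in $\mathbb{R}^m$ then give $P_{\lambda\sharp}(\phi\restr E')\big(P_\lambda(E_{1,\delta}(\lambda)\cap E')\big)=0$ and hence $\phi(E_{1,\delta}(\lambda)\cap E')=0$. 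Nowhere does the paper extract a positive-measure subset with Lipschitz-invertible $P_\lambda$.

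The step you flag as the ``main obstacle'' is in fact a genuine gap, not a technicality. You want to pass from ``small $\phi$-mass in every thin cone around every point'' to ``after discarding a $\phi$-null subset, $A''$ has slope separation, hence $P_\lambda|_{A''}$ is bi-Lipschitz from below.'' The Fubini computation you suggest only yields that the symmetric relation $R_s:=\{(a,x): x\in X(a,\lambda,s)\}$ has small $(\phi\otimes\phi)$-measure on $A''\times A''$. A small symmetric bad relation does \emph{not} in general admit a positive-measure independent set; this is the classical obstruction (compare: the relation $|x-y|<\varepsilon$ on $[0,1]$ has $(\mathcal{L}^1\otimes\mathcal{L}^1)$-measure $O(\varepsilon)$, yet any independent set is $\varepsilon$-separated and thus $\mathcal{L}^1$-null). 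So the ``discard a null subset and get slope separation'' move is not justified, and without it the contradiction never arrives. The role of pure unrectifiability in the paper's argument is also subtler than you use it: it enters \emph{inside} Lemma~\ref{l:geometrical} to make the Federer 3.3.6 covering/iteration argument close, not as an external contradiction against a rectifiable subset that you never actually produce. To repair your approach you would essentially have to reprove Lemma~\ref{l:geometrical}; once that lemma is available, the contradiction framing becomes unnecessary and the short density-estimate argument the paper uses is cleaner.
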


\begin{proposition}
\label{p:e2delta}
Let $E \subset \mathbb{R}^n$ be $\phi$-measurable with $\phi(E) < \infty$ and let $\delta >0$, $\lambda \in \Lambda$. Let the set $E_{2,\delta}(\lambda)$ be defined as
\begin{equation}
    \label{e:e2delta}
    E_{2,\delta}(\lambda) := \{a \in \mathbb{R}^n \ | \ \limsup_{s \to 0^+} \sup_{0<r<\delta} (rs)^{-m} \phi(E \cap X(a,r,\lambda,s))=\infty  \},
\end{equation}
then $\mathcal{L}^m\big(P_\lambda(E_{2,\delta}(\lambda))\big)=0$.
\end{proposition}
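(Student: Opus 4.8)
The plan is to transport the hypothesis defining $E_{2,\delta}(\lambda)$ through the map $P_\lambda$, recognising it as the statement that the pushforward measure has \emph{infinite} upper $m$-density at every point of $P_\lambda(E_{2,\delta}(\lambda))$, and then to conclude by the density estimates for Radon measures. Note that, in contrast with Proposition \ref{p:e1delta}, here neither the unrectifiability of $E$ nor the transversality of $(P_\lambda)$ (beyond continuity) is needed, and the cone--plane equivalence of Proposition \ref{p:equivcone} plays no role.

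First I would set $\mu := P_{\lambda\sharp}(\phi\restr E)$. Since $E$ is $\phi$-measurable and $\phi(E)<\infty$, the restriction $\phi\restr E$ is a finite Borel regular measure on $\mathbb{R}^n$; as $P_\lambda$ is continuous (hence Borel), $\mu$ is a Borel regular measure on $\mathbb{R}^m$ with $\mu(\mathbb{R}^m)=\phi(E)<\infty$, so $\mu$ is a finite Radon measure, and $\mu(B)=\phi(E\cap P_\lambda^{-1}(B))$ for every Borel set $B\subset\mathbb{R}^m$. Next comes the one geometric observation, immediate from the definition of the cone: if $x\in X(a,r,\lambda,s)$ then $|P_\lambda(x)-P_\lambda(a)|<s|x-a|\le sr$, so
\[
X(a,r,\lambda,s)\subset P_\lambda^{-1}\big(\overline{B}_{sr}(P_\lambda(a))\big).
\]
Since $X(a,\lambda,s)$ is open (by continuity of $P_\lambda$) and $\overline{B}_r(a)$ is closed, $X(a,r,\lambda,s)$ is Borel, hence
\[
\mu\big(\overline{B}_{sr}(P_\lambda(a))\big)=\phi\big(E\cap P_\lambda^{-1}(\overline{B}_{sr}(P_\lambda(a)))\big)\ \ge\ \phi\big(E\cap X(a,r,\lambda,s)\big)
\]
for all $a\in\mathbb{R}^n$, $s\in(0,1)$, $r>0$.

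Then I would unwind the definition of $E_{2,\delta}(\lambda)$: for $a$ in this set and for every $t>0$ and every $\eta>0$ there exist $s\in(0,\eta)$ and $r\in(0,\delta)$ with $\phi(E\cap X(a,r,\lambda,s))>t(rs)^m$. Writing $\rho:=rs\in(0,\eta\delta)$ and using the displayed inequality gives $\mu(\overline{B}_\rho(P_\lambda(a)))>t\rho^m$ with $\rho$ arbitrarily small; letting first $\eta\to0^+$ and then $t\to\infty$ yields $\Theta^{*m}(\mu,P_\lambda(a))=\infty$. Thus $P_\lambda(E_{2,\delta}(\lambda))\subset A_\infty:=\{y\in\mathbb{R}^m:\Theta^{*m}(\mu,y)=\infty\}$. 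Finally I would apply the density estimates (part~(2)) to the Radon measure $\mu$: for every $t>0$ we get $\mu(A_\infty)\ge t\,\mathcal{H}^m(A_\infty)$, and since $\mu(A_\infty)\le\mu(\mathbb{R}^m)=\phi(E)<\infty$, letting $t\to\infty$ forces $\mathcal{H}^m(A_\infty)=0$. Because $\mathcal{L}^m$ and $\mathcal{H}^m$ agree up to a dimensional constant, monotonicity gives $\mathcal{L}^m\big(P_\lambda(E_{2,\delta}(\lambda))\big)\le\mathcal{L}^m(A_\infty)=0$, which is the claim.

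The argument is essentially routine; the only points deserving care are the measure-theoretic bookkeeping — checking that $\phi\restr E$ is genuinely a finite Borel regular measure, so that $\mu$ is Radon and the density estimates apply — and the fact that the sets $X(a,r,\lambda,s)$ are Borel while $A_\infty$ need only be an arbitrary subset, which is harmless since the density estimate is stated for arbitrary sets. I do not foresee a substantive obstacle.
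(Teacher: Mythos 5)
Your proof is correct and follows essentially the same route as the paper: both observe $P_\lambda(X(a,r,\lambda,s))\subset\overline{B}_{rs}(P_\lambda(a))$, deduce that the pushforward $P_{\lambda\sharp}(\phi\restr E)$ has infinite upper $m$-density at every point of $P_\lambda(E_{2,\delta}(\lambda))$, and then apply the density estimate for finite Radon measures (Mattila, Theorem~6.9) to conclude. Your additional remarks on the Borel-regularity bookkeeping are accurate but are implicit in the paper's argument.
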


The three alternatives contained in the following proposition are crucial for the structure theorem. They were originally proved in \cite[Theorem 3.3.4]{fed1} in a slightly different form.

\begin{proposition}
\label{p:keycond}
Let $E \subset \mathbb{R}^n$ be a $\phi$-measurable set with $\phi(E)<\infty$ and let $\delta >0$. For every $a \in \mathbb{R}^n$, for $\mathcal{L}^l$-a.e. $\lambda \in \Lambda$ one of the following conditions holds true
\begin{align}
    \label{e:cond1}
    &\limsup_{s \to 0^+} \sup_{0<r<\delta}  \phi(E\cap X(a,r,\lambda,s)) (rs)^{-m}=0,\\
    \label{e:cond2}
    &\limsup_{s \to 0^+} \sup_{0<r<\delta}  \phi(E  \cap X(a,r,\lambda,s)) (rs)^{-m}=\infty,\\
    \label{e:cond3}
    &(E \setminus \{a\}) \cap  P^{-1}_\lambda(P_\lambda(a)) \cap \overline{B}_\delta(a) \neq \emptyset. 
\end{align}
\end{proposition}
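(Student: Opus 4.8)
The plan is to fix $a \in \mathbb{R}^n$ and $\delta > 0$, and to show that the set of $\lambda \in \Lambda$ for which \emph{none} of \eqref{e:cond1}, \eqref{e:cond2}, \eqref{e:cond3} holds is $\mathcal{L}^l$-null. So suppose $\lambda$ is such that \eqref{e:cond3} fails, i.e. $(E \setminus \{a\}) \cap P_\lambda^{-1}(P_\lambda(a)) \cap \overline{B}_\delta(a) = \emptyset$, and also \eqref{e:cond1} and \eqref{e:cond2} both fail, so that the quantity
\[
g(\lambda) := \limsup_{s \to 0^+} \sup_{0<r<\delta} \phi(E \cap X(a,r,\lambda,s))(rs)^{-m}
\]
lies in $(0,\infty)$. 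The strategy is to translate the cone condition defining $X(a,r,\lambda,s)$ into the language of Cone~2 via Proposition~\ref{p:equivcone}, and then to observe that failure of \eqref{e:cond3} forces the relevant points of $E$ to lie in $L_V(a,\lambda,cs) \setminus L_V(a,\lambda, s/c)$-type shells — that is, for each such $x$ there is a direction in $\Lambda$ along which $P_{\lambda'}(x) - P_{\lambda'}(a)$ is \emph{not} identically zero near $\lambda$. Using (H.2)--(H.3), the set of $\lambda'$ in a coordinate direction through $\lambda$ where $T_{ax}(\lambda')$ is small is itself small (the non-degeneracy of the Jacobian $\mathrm{J}_\lambda T_{ax}$ gives, via the implicit function theorem / a Sard-type estimate as in \cite[Lemma 3.3]{jar}, a bound $\mathcal{L}^l(\{|T_{ax}| < t\}) \lesssim t$ uniformly in $x$).

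**The main step** is then a Fubini/co-area argument on $\Lambda$. I would integrate over $\lambda \in \Lambda$ a suitable maximal-type functional built from $\phi(E \cap X(a,r,\lambda,s))$ and show, using the transversality estimate above together with $\phi(E) < \infty$, that
\[
\int_\Lambda \Big( \limsup_{s\to 0^+} \sup_{0<r<\delta}\phi(E \cap X(a,r,\lambda,s))(rs)^{-m} \Big) \, d\mathcal{L}^l(\lambda)
\]
is either $0$ or $+\infty$ on the set where \eqref{e:cond3} fails — more precisely, I would show that on $\{\lambda : \eqref{e:cond3} \text{ fails}\}$ the value $g(\lambda)$ can only be $0$ or $\infty$. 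The mechanism: if $g(\lambda_0) \in (0,\infty)$ at some such $\lambda_0$, then by Lemma~\ref{l:geometrical} (applied on a small ball, after noting $g(\lambda_0)<\infty$ gives a cone-density bound of the form \eqref{e:lemma1.1} at scales controlled by $\delta$) one obtains $\Theta^{*m}(\phi \restr E, a) < \infty$ with a quantitative constant, \emph{and} the points contributing to the cone lie in preimages $P_\lambda^{-1}(\overline{B}_\rho(P_\lambda(a)))$; the failure of \eqref{e:cond3} together with continuity of $P$ in $\lambda$ means that for $\lambda'$ near $\lambda_0$ in a positive-measure set of directions these preimage-shells become empty or shrink, which would force $g(\lambda') = 0$ — contradicting, after integrating, that $g$ is bounded away from $0$ and $\infty$ on a positive-measure set. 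The clean way to run this is: the set $\{\lambda : 0 < g(\lambda) < \infty, \ \eqref{e:cond3} \text{ fails}\}$ has measure zero because on it one can run the argument of Proposition~\ref{p:e1delta} (which gives $\phi$-a.e. vanishing, hence via a Fubini slicing a contradiction with $g > 0$) while simultaneously Proposition~\ref{p:e2delta} handles the $g = \infty$ alternative.

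**Carrying it out in order:** (1) reduce to $\mathrm{diam}$ small and fix $a$; (2) on $\{\lambda : \eqref{e:cond3}\text{ fails}\}$, use Proposition~\ref{p:equivcone} to rewrite $X(a,r,\lambda,s)$ via the planes $L_V$; (3) apply the transversality bound from \cite[Lemma 3.3]{jar} to estimate, for each $x \in E \setminus\{a\}$ with $x \notin P_\lambda^{-1}(P_\lambda(a))$, the $\mathcal{L}^l$-measure of bad parameters; (4) integrate in $\lambda$ against $\phi \restr E$ and invoke Fubini (the measurability of $(\lambda,x)\mapsto |T_{ax}(\lambda)|$ is clear from continuity of $P$) to conclude that $\phi(\{x : (\lambda,x)\text{ bad}\}) = 0$ for $\mathcal{L}^l$-a.e.\ $\lambda$, i.e.\ $g(\lambda)=0$ for a.e.\ $\lambda$ where \eqref{e:cond3} fails and $g(\lambda)<\infty$; (5) combine with Proposition~\ref{p:e2delta} to sweep the remaining $\lambda$ into \eqref{e:cond2}.

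**The hard part** I expect is step (3)–(4): making the transversality estimate uniform in the pair $(a,x)$ and quantitative enough that the integral over $\Lambda$ genuinely converges, together with correctly handling the $\limsup$ over $s$ and $\sup$ over $r$ inside the integral (one cannot naively interchange $\limsup$ and $\int$, so some monotone/Fatou bookkeeping in the style of the upper-integral estimates recalled in Section~\ref{s:notation} will be needed). The role of the failure of \eqref{e:cond3} is precisely to guarantee that $T_{ax}(\lambda) \neq 0$ for all relevant $x$, which is what makes the Jacobian non-degeneracy in (H.2) usable; without it the estimate degenerates exactly along $P_\lambda^{-1}(P_\lambda(a))$, which is the content of the third alternative.
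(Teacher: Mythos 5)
Your proposal identifies the correct target (show the set of $\lambda$ where $g(\lambda)\in(0,\infty)$ and \eqref{e:cond3} fails is $\mathcal{L}^l$-null) and correctly recognises that Proposition~\ref{p:equivcone} must be used to pass between the two cone types, but the core mechanism you propose does not work and misses the key device in the paper's argument. The paper does not run a Fubini/co-area contradiction on the cone-density functional. Instead, it fixes $a$ and, for each $m$-plane $V\subset\mathbb{R}^l$ and each $\sigma\in V^\perp$, builds an auxiliary measure $\psi_\sigma$ \emph{on the parameter slice} $V_\sigma\cap\overline{B}_{\delta_0}(\lambda_0)\subset\Lambda$, namely $\psi_\sigma(\Sigma):=\sup_{0<r<\delta}r^{-m}\phi\bigl((E\setminus\{a\})\cap\overline{B}_r(a)\cap L_{V_\sigma}(\Sigma)\bigr)$. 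It then checks that $\psi_\sigma$ vanishes on the set $D_{V_\sigma}$ of parameters $\lambda$ for which the preimage of $P_\lambda(a)$ misses $E\cap\overline{B}_\delta(a)$, and applies a density alternative theorem in $\Lambda$ (\cite[Theorem 18.5]{mat1}) with $D_{V_\sigma}$ playing the role of the exceptional null set: for $\mathcal{H}^m$-a.e.\ $\lambda$ on the slice one has density $0$, density $\infty$, or $\lambda\in C_{V_\sigma}$ (i.e.\ \eqref{e:cond3}). Tonelli in $\sigma$ then promotes this to a.e.\ $\lambda\in\Lambda$, and Proposition~\ref{p:equivcone} translates the $L_V$-cone densities back into the $X$-cone densities. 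The third alternative therefore arrives as the exceptional set of a density theorem applied to a measure on $\Lambda$, not from a Jacobian non-degeneracy estimate as you suggest; the transversality hypotheses enter only through Proposition~\ref{p:equivcone}.

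Two concrete problems in your plan. First, you invoke Proposition~\ref{p:e1delta} to ``run the argument'' on the $g<\infty$ branch, but that proposition assumes $E$ is purely $(\phi,m)$-unrectifiable, whereas Proposition~\ref{p:keycond} is stated for an arbitrary $\phi$-measurable $E$ with $\phi(E)<\infty$; moreover \ref{p:e1delta} and \ref{p:e2delta} fix $\lambda$ and vary $a$, which is the wrong Fubini direction for the statement at hand (they are used \emph{together with} \ref{p:keycond} inside Lemma~\ref{l:conddelta} and Theorem~\ref{t:structure}, not to prove it). Second, your claimed estimate $\mathcal{L}^l(\{|T_{ax}|<t\})\lesssim t$ and the assertion that failure of \eqref{e:cond3} makes nearby preimage shells ``become empty or shrink'' in a positive-measure set of directions are not substantiated and do not obviously hold; what makes the argument close is precisely the density alternative in $\Lambda$, not a quantitative shell estimate. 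As written, your steps (3)--(4) are a sketch of a co-area bound you do not carry out, and the bookkeeping concerning $\limsup_s\sup_r$ inside an integral that you flag as ``the hard part'' is indeed where the proposal would stall without the $\psi_\sigma$ construction.
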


As an immediate consequence we have the validity of the following lemma.

\begin{lemma}
\label{l:conddelta}
Let $E \subset \mathbb{R}^n$ be a $\phi$-measurable set with $\phi(E)<\infty$ and let $\delta >0$. For $\mathcal{L}^l$-a.e. $\lambda \in \Lambda$, for $\phi \restr E$-a.e. $a \in \mathbb{R}^n$, one of the following conditions holds true
\begin{align}
    \label{e:conddelta1}
    &  \limsup_{s \to 0^+} \sup_{0<r<\delta}  \phi(E  \cap X(a,r,\lambda,s)) (rs)^{-m}=0, \text{ for some }\delta>0,\\
    \label{e:conddelta2}
    &\limsup_{s \to 0^+} \sup_{0<r<\delta}  \phi(E  \cap X(a,r,\lambda,s)) (rs)^{-m}=\infty, \text{ for every }\delta>0,\\
    \label{e:conddelta3}
    & \ \ \ \ \ \ \ \ \ \  (E \setminus \{a\})  \cap P^{-1}_\lambda(P_\lambda(a)) \cap \overline{B}_\delta(a) \neq \emptyset, \text{ for every }\delta>0. 
\end{align}
\end{lemma}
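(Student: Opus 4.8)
This lemma follows from Proposition \ref{p:keycond} by swapping the order of the two almost-everywhere quantifiers, via Tonelli's theorem, and then letting $\delta\to 0$ along the sequence $\delta=1/k$. As in the proof of Proposition \ref{p:keycond}, $\phi\restr E$ is Radon, hence inner regular, so we may assume $E$ to be $\sigma$-compact (replace it by a $\sigma$-compact subset of full $\phi\restr E$-measure); covering $\Lambda$ by countably many balls, we may also assume the whole argument takes place on a fixed $\overline{B}_{\delta_0}(\lambda_0)$ with $\overline{B}_{2\delta_0}(\lambda_0)\subset\Lambda$.

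The first step is to record the joint measurability in $(\lambda,a)$ of the relevant quantities. Fix $k\in\mathbb{N}$. For fixed $r,s>0$, the set $\{(\lambda,a,x)\in\overline{B}_{\delta_0}(\lambda_0)\times E\times\mathbb{R}^n:\ |P_\lambda(x)-P_\lambda(a)|<s|x-a|,\ |x-a|\le r\}$ is Borel, by continuity of $P$, so, by Tonelli's theorem on $(\overline{B}_{\delta_0}(\lambda_0)\times E)\times\mathbb{R}^n$, the function $(\lambda,a)\mapsto\phi(E\cap X(a,r,\lambda,s))$ is $(\mathcal{L}^l\otimes\phi\restr E)$-measurable. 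Using that $\phi\restr E$ is finite, so that $r\mapsto\phi(E\cap X(a,r,\lambda,s))$ is right-continuous, together with the inclusions $X(a,\lambda,s)\subset\{x\ne a:\ |P_\lambda(x)-P_\lambda(a)|\le s|x-a|\}\subset X(a,\lambda,s')$ valid for $s'>s$, one checks that
\[
h_k(\lambda,a):=\limsup_{s\to0^+}\ \sup_{0<r<1/k}\phi(E\cap X(a,r,\lambda,s))\,(rs)^{-m}
\]
is obtained by countably many infima and suprema, over rational values of $r$ and $s$, of the measurable functions just described; hence $h_k$ is $(\mathcal{L}^l\otimes\phi\restr E)$-measurable. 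Moreover, the set of those $(\lambda,a)$ for which \eqref{e:cond3} holds with $\delta=1/k$ is the projection onto the first two coordinates of the $\sigma$-compact set $\{(\lambda,a,x):x\in E,\ x\ne a,\ |x-a|\le 1/k,\ P_\lambda(x)=P_\lambda(a)\}$, hence is itself $\sigma$-compact and in particular measurable. Consequently the set $N_k$ of all pairs $(\lambda,a)\in\overline{B}_{\delta_0}(\lambda_0)\times E$ such that $0<h_k(\lambda,a)<\infty$ and \eqref{e:cond3} fails at $\delta=1/k$ is $(\mathcal{L}^l\otimes\phi\restr E)$-measurable.

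By Proposition \ref{p:keycond} applied with $\delta=1/k$, for every $a$ the slice $\{\lambda:(\lambda,a)\in N_k\}$ is $\mathcal{L}^l$-null, so $(\mathcal{L}^l\otimes\phi\restr E)(N_k)=0$ by Tonelli; integrating in the other order, for $\mathcal{L}^l$-a.e.\ $\lambda$ the slice $\{a\in E:(\lambda,a)\in N_k\}$ is $\phi$-null. Intersecting the corresponding co-null sets over $k\in\mathbb{N}$, we obtain an $\mathcal{L}^l$-co-null set $\Lambda_0$ such that for every $\lambda\in\Lambda_0$ and for $\phi\restr E$-a.e.\ $a$ the following holds: for every $k$, either $h_k(\lambda,a)=0$, or $h_k(\lambda,a)=\infty$, or \eqref{e:cond3} holds at $\delta=1/k$. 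Fix such a pair $(\lambda,a)$. Since $k\mapsto h_k(\lambda,a)$ is non-increasing, since \eqref{e:cond3} at $\delta=1/k$ is implied by \eqref{e:cond3} at $\delta=1/k'$ whenever $k'\ge k$, and since \eqref{e:conddelta1}, \eqref{e:conddelta2}, \eqref{e:conddelta3} are equivalent, respectively, to ``$h_k(\lambda,a)=0$ for some $k$'', ``$h_k(\lambda,a)=\infty$ for all $k$'' and ``\eqref{e:cond3} holds at $\delta=1/k$ for all $k$'', the required trichotomy follows: if $h_k(\lambda,a)=0$ for some $k$ we are in case \eqref{e:conddelta1}; otherwise $h_k(\lambda,a)>0$ for all $k$, and if in addition $h_k(\lambda,a)=\infty$ for all $k$ we are in case \eqref{e:conddelta2}; if not, there is $k_0$ with $0<h_{k_0}(\lambda,a)<\infty$, so $0<h_k(\lambda,a)\le h_{k_0}(\lambda,a)<\infty$ for every $k\ge k_0$, which by the alternative forces \eqref{e:cond3} at $\delta=1/k$ for every $k\ge k_0$, hence for every $k$ by monotonicity in the ball, that is, case \eqref{e:conddelta3}. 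Taking the union over a countable covering of $\Lambda$ concludes the proof.

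The only genuinely delicate step is the joint $(\lambda,a)$-measurability of $N_k$; it is a reprise of the measurability argument in the proof of Proposition \ref{p:keycond}, now with $a$ promoted to a variable, the main point being the rewriting of the outer $\limsup_{s\to0^+}$ as a countable operation, which must cope with the strict inequality in the definition of $X(a,\lambda,s)$ through the sandwich $X(a,\lambda,s)\subset\{x\ne a:|P_\lambda(x)-P_\lambda(a)|\le s|x-a|\}\subset X(a,\lambda,s')$. The remaining ingredients (Proposition \ref{p:keycond}, Tonelli's theorem and the monotonicity in $\delta$) are routine.
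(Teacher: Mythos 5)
Your proof is correct and follows essentially the same strategy as the paper's: reduce to $\sigma$-compact $E$, establish joint $(\lambda,a)$-measurability of the relevant sets, apply Proposition \ref{p:keycond} at $\delta=1/k$, swap the almost-everywhere quantifiers via Tonelli, intersect over $k$, and exploit the monotonicity in $\delta$ to pass from the fixed-$\delta$ alternative to the quantified one. The paper's own write-up is terser — it states the measurability follows ``as in the last part of the proof of Theorem \ref{t:structure}'' and the $\delta$-quantified trichotomy follows ``by using the fact that the map $\delta\mapsto\limsup\dots$ is increasing together with Proposition \ref{p:keycond}'' — so your argument is essentially that proof with those two steps spelled out.
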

\begin{proof}
First we notice that for every $a \in \mathbb{R}^n$ we have that $\mathcal{L}^l$-a.e. $\lambda \in \Lambda$ satisfies one among \eqref{e:conddelta1}-\eqref{e:conddelta3}. This follows by using the fact that the map 
\[
(0,\infty) \ni \delta \mapsto \limsup_{s \to 0^+} \sup_{0<r<\delta}  \phi(E  \cap X(a,r,\lambda,s)) (rs)^{-m}
\]
is increasing together with Proposition \ref{p:keycond}. Since $\phi \restr E$ is a Radon measure, the same argument at the beginning of the proof of Proposition \ref{p:keycond} tells us that we may equivalently prove the lemma supposing that $E$ is $\sigma$-compact. In this situation we claim that for $(\mathcal{L}^l \otimes \phi \restr E)$-a.e. $(\lambda,a) \in \Lambda \times E$ one between \eqref{e:conddelta1}-\eqref{e:conddelta3} holds true. Clearly our claim follows from Tonelli's theorem \cite[Proposition 5.2.1]{coh} once that we prove the $(\mathcal{L}^l \otimes \phi \restr E)$-measurability of the set of points $(\lambda,a) \in \Lambda \times E$ satisfying \eqref{e:conddelta1} or \eqref{e:conddelta2} or \eqref{e:conddelta3}. This last fact can be obtained as in the last part of the proof of Theorem \ref{t:structure} below.  
\end{proof}

We are now in position to prove Theorem \ref{t:structure}.

\begin{proof}[Proof of Theorem \ref{t:structure}.]
Let us denote by $\mathcal{R}$ the family of all countably $(\phi,m)$-rectifiable and $\phi$-measurable subsets of $\mathbb{R}^n$ and consider the minimum problem
\begin{equation}
\label{e:structure4}
    \min_{R \in \mathcal{R}} \phi(E \setminus R).
\end{equation}
Decomposition \eqref{e:dec} follows immediately if we show that \eqref{e:structure4} admits a solution. In addition we claim that we can find a solution to \eqref{e:structure4} which guarantees also that $E \setminus R$ is $\sigma$-compact. To show this consider a minimizing sequence $(R_j)$ in $\mathcal{R}$ and define $R := \bigcup_j R_j$. Clearly $R \in \mathcal{R}$ and $\phi(E \setminus R) \leq \phi(E \setminus R_j)$ for $j=1,2,\dotsc$, so that $R$ solves \eqref{e:structure4}. Moreover being $E \setminus R$ $\phi$-measurable and being $\phi \restr E$ a Radon measure, we can find a $\sigma$-compact set $\Sigma \subset E \setminus R$ with $\phi((E \setminus R) \setminus \Sigma)=0$. Redefining $R$ with abuse of notation as $R \cup [(E \setminus R) \setminus \Sigma]$ we finally obtain the desired decomposition.

To prove the remaining part we may assume without loss of generality that $E$ is a purely $(\phi,m)$-unrectifiable $\sigma$-compact set. Proposition \ref{p:e1delta} applied to the purely $(\phi,m)$-unrectifiable set $E$ tells us that for every $\lambda \in \Lambda$
\begin{equation}
\label{e:structure8}
\phi(E _{1,\delta}(\lambda) \cap E)=0.
\end{equation}
Define
\begin{equation}
\label{e:structure6}
    S_\delta := \{(\lambda,a) \in \Lambda \times E \ | \ \limsup_{s \to 0^+} \sup_{0 <r<\delta} (rs)^{-m} \phi(E \cap X(a,r,\lambda,s))=0 \}
\end{equation}
and suppose for a moment that $S_\delta$ is $(\mathcal{L}^l \otimes \phi \restr E)$-measurable
 for every $\delta >0$. Since $(S_\delta)_\lambda  = E_{1,\delta}(\lambda) \cap E$ for every $\lambda \in \Lambda$, property \eqref{e:structure8} together with Tonelli's theorem \cite[Proposition 5.2.1]{coh} allow us to deduce 
 \begin{equation}
     (\mathcal{L}^l \otimes \phi \restr E)(S_\delta) =0, \ \ \ \ \delta >0
 \end{equation}
 and hence by setting $S:= \cup_{j=1}^\infty S_{1/j}$ also
 \begin{equation}
     ( \mathcal{L}^l \otimes \phi \restr E)(S) =0.
 \end{equation}
 Condition \eqref{e:structure1} implies thus that for $\mathcal{L}^l$-a.e. $\lambda \in \Lambda$
 \begin{equation}
     \label{e:structure9}
     \sigma_\lambda(P_\lambda(S_\lambda \cap \hat{E}_\lambda))=0. 
 \end{equation}
 Combining this with condition \eqref{e:structure2} we infer that for $\mathcal{L}^l$-a.e. $\lambda \in \Lambda$
 \begin{equation}
     \label{e:structure10}
     \sigma_\lambda(P_\lambda(S_\lambda \cap \hat{E}_\lambda))=0 \ \ \text{and} \ \
     E \cap P_\lambda^{-1}(y) \  \text{is finite for $\sigma_\lambda$-a.e. }y \in \mathbb{R}^m. 
 \end{equation}
 Define
 \begin{equation}
     \label{e:structure11}
     S':=\{(\lambda,a) \in \Lambda \times E  \ | \ \text{\eqref{e:conddelta1}-\eqref{e:conddelta3}} \text{ do not hold true}\},
 \end{equation}
 and suppose for a moment that $S'$ is $(\mathcal{L}^l \otimes \phi \restr E)$-measurable. Lemma \ref{l:conddelta} in combination with Tonelli's theorem tell us that $(\mathcal{L}^l \otimes \phi \restr E)(S')=0$. As a consequence, applying again property \eqref{e:structure1} we deduce that for $\mathcal{L}^l$-a.e. $\lambda \in \Lambda$
 \begin{equation}
     \label{e:structure12}
     \sigma_\lambda(P_\lambda(S'_\lambda \cap \hat{E}_\lambda))=0.
 \end{equation}
 In addition notice that for every $\lambda \in \Lambda$
 \begin{equation}
 \label{e:structure13}
 \begin{split}
    \hat{E}_\lambda  &= \{a \in \hat{E}_\lambda  \ | \ \text{\eqref{e:conddelta1} or \eqref{e:conddelta2} or \eqref{e:conddelta3} holds true}   \} \cup (S'_\lambda \cap \hat{E}_\lambda) \\
    &= \{a \in \hat{E}_\lambda   \ | \ \text{\eqref{e:conddelta2} or \eqref{e:conddelta3} holds true}   \} \cup (S_\lambda \cap \hat{E}_\lambda) \cup (S'_\lambda \cap \hat{E}_\lambda).
    \end{split}
 \end{equation}
 Putting together \eqref{e:structure10}-\eqref{e:structure11}, Proposition \ref{p:e2delta}, and the condition $\sigma_\lambda \ll \mathcal{L}^m$ (a.e. $\lambda$), we infer that for $\mathcal{L}^l$-a.e. $\lambda \in \Lambda$
 \begin{align}
     \label{e:structure14}
     \sigma_\lambda\big(P_\lambda(\{a \in  \hat{E}_\lambda   \ | \ &\text{\eqref{e:conddelta2} or \eqref{e:conddelta3} holds true}   \})\big) =0, \\
     \label{e:structure15}
     &\sigma_\lambda(P_\lambda(S_\lambda \cap \hat{E}_\lambda))=0,\\
     \label{e:structure16}
     &\sigma_\lambda(P_\lambda(S'_\lambda \cap \hat{E}_\lambda))=0.
 \end{align}
 This together with \eqref{e:structure13} and the sub-additivity of $\sigma_\lambda$ gives the desired conclusion.
 
 It remains to prove the measurability of $S_\delta$ and $S'$. To this purpose define the $(\phi \restr E \otimes \mathcal{L}^l \otimes \phi \restr E)$-measurable set $Z$ by
 \[
 Z:= \{(a,\lambda,x) \in E \times \Lambda \times E \ | \ |P_\lambda(x) -P_\lambda(a)| < s|x-a|, \ |x-a| \leq r \},
 \]
and notice that $E \cap X(a,r,\lambda,s) = Z_{(a,\lambda)}$. Hence by Tonelli's theorem the map $(a,\lambda) \mapsto \phi(E \cap X(a,r,\lambda,s))$ is $(\phi \restr E \otimes \mathcal{L}^n)$-measurable for every $r>0$ and $0<s<1$. Since for every $a \in E$ and for every $\lambda \in \Lambda$ we can make use of Lebesgue's dominated convergence theorem to infer 
\[
\lim_{k \to \infty} \phi(E \cap X(a,r_k,\lambda,s_k))= \phi(E \cap X(a,r,\lambda,s)), \ \ r>0, \ 0<s<1,
\]
whenever $r_k \searrow r$ and $s_k \nearrow s$, we can write for $(\mathcal{L}^l \otimes \phi \restr E)$-a.e. $(\lambda,a)$, for every $j=1,2,\dotsc$, and every $\delta>0$
\[
\begin{split}
    \chi_j(a,\lambda)&:=\sup_{0<s<1/j} \sup_{0< r < \delta} (rs)^{-m} \phi(E  \cap X(a,r,\lambda,s))\\ &= \sup_{s \in (0,1/j)\cap \mathbb{Q}} \ \sup_{r \in (0,\delta) \cap \mathbb{Q}} (rs)^{-m} \phi(E \cap X(a,r,\lambda,s)).
\end{split}
\]
But this means that $\chi_j(a,\lambda)$ is $(\mathcal{L}^l \otimes \phi \restr E)$-measurable. Since
\[
S_\delta = \{(a,\lambda) \in E \times \Lambda \ | \ \lim_{j \to \infty} \chi_j(a,\lambda)= 0  \},
\]
we deduce also the $(\mathcal{L}^l \otimes \phi \restr E )$-measurability of $S_\delta$ for every $\delta >0$. 

Now we pass to the measurability of $S'$. Notice that 
\[
S_1:=\{(a,\lambda) \in E \times \Lambda \ | \ \eqref{e:conddelta1} \text{ holds true} \} = \bigcup_{j =1}^\infty S_{1/j}
\]
and this gives the $(\mathcal{L}^l \otimes \phi \restr E)$-measurability of $S_1$. Analogously one can prove the $( \mathcal{L}^l \otimes \phi \restr E)$-measurability of $S_2:= \{(a,\lambda) \in E \times \Lambda \ | \ \eqref{e:conddelta2} \text{ holds true} \}$. Define $S_3 := \{(a,\lambda) \in E \times \Lambda \ | \ \eqref{e:conddelta3} \text{ holds true} \}$ and notice that 
\begin{equation}
\label{e:structure17}
\begin{split}
\pi_{1,2}(\{(a,\lambda,x)& \in E \times  \Lambda \times E \ | \ P_\lambda(x) =P_\lambda(a), \ 0< |x-a| \leq \delta  \})\\
&= \{(a,\lambda) \in E \times \Lambda \ | \ (E \setminus \{a\}) \cap P^{-1}_\lambda(P_\lambda(a)) \cap \overline{B}_\delta(a) \neq \emptyset \}.
\end{split}
\end{equation}
Since $E$ is Borel by assumption, we can apply the measurable projection theorem \cite[Proposition 8.4.4]{coh} to infer that the set in \eqref{e:structure17} is $( \mathcal{L}^l \otimes \phi \restr E)$-measurable and since 
\[
\begin{split}
    S_3= \bigcap_{j=1}^\infty \{(a,\lambda) \in E \times \Lambda \ | \ (E \setminus \{a\})  \cap P^{-1}_\lambda(P_\lambda(a)) \cap \overline{B}_{1/j}(a) \neq \emptyset \},
\end{split}
\]
we deduce that also $S_3$ is $( \mathcal{L}^l \otimes \phi \restr E)$-measurable. Finally, the $( \mathcal{L}^l \otimes \phi \restr E)$-measurability of $S_1,S_2,S_3$ immediately implies the $( \mathcal{L}^l \otimes \phi \restr E)$-measurability of $S'$.
\end{proof}

\section{Rectifiability in the class of integralgeometric measures}
\label{s:rectintgeo}
For the rest of the section we assume that $\Lambda \subset \mathbb{R}^l$ is an open and bounded set and that the family of maps $(P_\lambda)_{\lambda \in \Lambda}$, where $P_\lambda \colon \mathbb{R}^n \to \mathbb{R}^m$, is transversal in the sense of Definition \ref{d:transversal}. 

Given a measurable family of Borel regular measures $(\mu_\lambda)_{\lambda \in \Lambda}$ in $\mathbb{R}^n$ (see Definition \ref{d:measfamily}), and given a Orlicz function $\Phi \colon [0,\infty) \to [0,\infty)$ (see Definition \ref{d:orlicz}), we set
\[
g_B(\lambda):= \int_\Lambda \mu_\lambda(B) \, d\lambda, \ \ \text{ for every Borel set $B \subset \mathbb{R}^n$}.
\]
The associated (outer) Borel regular measure $\mathscr{I}^m_\Phi$ is defined via Caratheodory's construction for every set $E \subset \mathbb{R}^n$ as
\begin{equation}
\label{e:caratheodoryc2}
\mathscr{I}^m_\Phi(E) := \sup_{\delta > 0} \, \inf_{\mathcal{G}_\delta} \sum_{B \in \mathcal{G}_\delta}  \|g_B\|_\Phi,
\end{equation}
where the infimum is taken with respect to all countable coverings $\mathcal{G}_\delta$ of $E$ made of Borel sets with diameter less than or equal to $\delta$.
We are now in position to present the relevant class of integralgeometric measures.   

\begin{definition}
\label{d:igm}
 The measure $\mathscr{I}^m_\Phi$ defined in \eqref{e:caratheodoryc2} is integralgeometric if and only if
\begin{equation}
    \label{e:condigm1}
       P_{\lambda\sharp} \, \mu_\lambda \ll \mathcal{L}^m   \  \text{for $\mathcal{L}^l$-a.e. } \lambda \in \Lambda 
\end{equation}
and there exists a Borel set $E \subset \mathbb{R}^n$ satisfying the following two conditions
\begin{align}
\label{e:condigm5}
& \ \ \ \ \ \ \ \ \ \ \ \ \ \ \ \ \ \ \  \mu_\lambda(\mathbb{R}^n \setminus E)=0 \ \ \text{for $\mathcal{L}^l$-a.e. $\lambda \in \Lambda$}\\
\label{e:condigm6}
&\mathcal{H}^0(E \cap P^{-1}_\lambda(y)) < \infty \ \ \text{for $\mathcal{L}^l$-a.e. }\lambda \in \Lambda \text{ and $P_{\lambda\sharp}\mu_\lambda$-a.e. }y \in \mathbb{R}^m.
\end{align}
\end{definition} 

\begin{remark}
    In order to ease the notation, when $\Phi(t)=t^p$ for some exponent $p\geq 1$, we compactly denote $\mathscr{I}^m_\Phi$ as $\mathscr{I}^m_p$.
\end{remark}

Here we present a few elementary properties of the measures $\mathscr{I}^m_\Phi$ which follows by the construction in \eqref{e:caratheodoryc2}.

\begin{proposition}
\label{p:borelreg}
Let $(\mu_\lambda)_{\lambda \in \Lambda}$ be a measurable family of Borel regular measures in $\mathbb{R}^n$ and let $\Phi$ be a Orlicz function. Then 
\begin{enumerate}
    \item The measure $\mathscr{I}_\Phi^m$ is Borel regular.
    \item  The following equalities hold true
\begin{align}
    \label{e:equal3}
    \mathscr{I}_1^m(B) &= \zeta_1(B)=  \int_{\Lambda} \mu_\lambda(B ) \, d\lambda, \ \ \ \text{\emph{for every} } B \subset \mathbb{R}^n \ \text{\emph{Borel}}.
\end{align}
 \item For every Borel set $B \subset \mathbb{R}^n$ it holds true
 \begin{equation}
 \label{e:zeroiff}
 \text{$\mathscr{I}^m_\Phi(B)=0$ \ \emph{if and only if} \ $\mu_\lambda(B)=0$ for $\mathcal{L}^l$-a.e. $\lambda \in \Lambda$}.
 \end{equation}
 In particular, for every couple of Orlicz functions $\Phi$ and $\Phi'$, and for every set $E \subset \mathbb{R}^n$, it follows
 \begin{equation}
 \label{e:samenegset}
     \mathscr{I}^m_\Phi(E)=0 \ \emph{if and only if}  \  \mathscr{I}^m_{\Phi'}(E)=0.
 \end{equation}
\end{enumerate}

\end{proposition}
\begin{proof}
Condition (1) follows from the use of Borel coverings in the Carathéodory construction (see \cite[Subsection 2.10.1]{fed1}).

Let us prove (2). 
 Given $B \subset \mathbb{R}^n$ Borel and $\epsilon >0$, from the definition of $\mathscr{I}^m_1$ we find a countable Borel covering $(B_i)$ of $B$ satisfying $\sum_i\zeta_1(B_i) \leq \mathscr{I}^m_1(B) + \epsilon$. Therefore
\[
\int_{\Lambda} \mu_\lambda(B) \, d\lambda = \zeta_1(B) \leq \sum_i\zeta_1(B_i) \leq \mathscr{I}^m_1(B) +\epsilon,
\]
where we have used the $\sigma$-subadditivity of $\zeta_1(\cdot)$. The arbitrariness of $\epsilon$ gives
\begin{equation}
\label{e:equal4.1r}
\zeta_1(B) \leq \mathscr{I}^m_1(B), \ \ B \subset \mathbb{R}^n \ \text{Borel}.
\end{equation}
It remains to prove
\begin{equation}
\label{e:equal4.1}
 \zeta_1(B) \geq \mathscr{I}^m_1(B), \ \ B \subset \mathbb{R}^n \ \text{Borel}.
\end{equation}
Without loss of generality we may assume $\zeta_1(B)<\infty$. We notice that, in order to prove \eqref{e:equal4.1}, it is enough to show that for every $\delta>0$ we can find a Borel countable covering of $B$, say $\mathcal{G}_\delta$, made of sets having diameter less than or equal to $\delta$, such that
\begin{equation}
\label{e:equal5.1}
\zeta_1(B) = \sum_{\tilde{B} \in G_\delta} \zeta_1(\tilde{B}).
\end{equation}
But this will easily follow if we prove that $\zeta_1$ is $\sigma$-additive on Borel sets. To show this, we observe that the measure $\zeta_1$ is $\sigma$-additive on Borel sets because it is obtained as the average of the Borel regular measures $(\mu_\lambda)$: indeed, for every pairwise disjoint sequence $(B_k)$ of Borel sets 
\[
\zeta_1(\bigcup_{k}B_k) =\int_\Lambda \mu_\lambda(\bigcup_{k}B_k) \, d\lambda  = \sum_k \int_\Lambda \mu_\lambda(B_k) \, d\lambda = \sum_k \zeta_1(B_k), 
\]
where we have used Beppo Levi's monotone convergence theorem. This concludes the proof of \eqref{e:equal3}.

 Now let us prove (3). Suppose that $\mu_\lambda(B) = 0$ for $\mathcal{L}^l$-a.e. $\lambda \in \Lambda$. Fix $\delta > 0$, and observe that if $(B_i)$ is a Borel covering of $B$ with $\text{diam}(B_i) \leq \delta$ for all $i \in \mathbb{N}$, then the family $(B \cap B_i)$ also forms a Borel covering of $E$ with $\text{diam}(B \cap B_i) \leq \delta$ for all $i$. Set
\begin{equation}
\label{e:notation3.2}
g_{E}(\lambda):= \mu_\lambda(E), \ \ \text{ for $E \subset \mathbb{R}^n$ Borel and $\lambda \in \Lambda$}.
\end{equation}
 Since by assumption $g_B(\lambda) = 0$ for $\mathcal{L}^l$-a.e. $\lambda$, we infer the validity of the following inequality 
 \[
 \sum_i \zeta_\Phi(B \cap B_i) = \sum_i \|g_{B \cap B_i}\|_\Phi \leq \sum_i \|g_{B}\|_\Phi =0,
 \]
 Thanks to the arbitrariness of $\delta >0$, we immediately conclude that $\mathscr{I}^m_\Phi(B)=0$. 

  Suppose now that $\mathscr{I}^m_\Phi(B)=0$. By construction we find Borel coverings of $B$, say $(B_i^k)_{i=1}^{\infty}$ for $k=1,2,\dotsc$, such that $\sum_i \zeta_\Phi(B_i^k) \to 0$ as $k \to \infty$. 
 
  Using the notation in \eqref{e:notation3.2}, we can use the $\sigma$-subadditivity of the $\Phi$-norm (by construction we have $\sum_i \|g_{B^k_i}\|_\Phi < \infty$) to write for every $k=1,2,\dotsc$
 \begin{align*}
 \|g_B\|_\Phi \leq  \bigg\|  \sum_{i=1}^\infty g_{B \cap B^k_i }\bigg\|_\Phi  &\leq    \sum_{i=1}^\infty \|g_{B \cap B^k_i }\|_\Phi\\
   &= \sum_{i=1}^{\infty} \zeta_\Phi(B \cap B_i^k) \\
 &\leq \sum_{i=1}^{\infty} \zeta_\Phi(B_i^k).
 \end{align*}
 Letting $k \to \infty$ in the last term of the previous chain of inequalities, we infer $\|g_B\|_\Phi=0$ which immediately implies the desired result.

 Eventually, condition \eqref{e:samenegset} is a direct consequence of the Borel regularity together with property \eqref{e:zeroiff}. 

\end{proof}

\begin{proposition}
Let $(\mu_\lambda)_{\lambda \in \Lambda}$ be a measurable family of Borel regular measures in $\mathbb{R}^n$ and let $\Phi$ be a Orlicz function. By denoting $g_B(\lambda):= \mu_\lambda(B)$ for every Borel set $B \subset \mathbb{R}^n$ and every $\lambda \in \Lambda$, the measure $\mathscr{I}_\Phi^m$ satisfies
\begin{equation}
\label{e:sigmasub}
      \|g_B\|_\Phi \leq \mathscr{I}_\Phi^m(B), \ \ B \subset \mathbb{R}^n \text{ Borel}.
      \end{equation}
\end{proposition}
\begin{proof}
 It is enough to use the $\sigma$-subadditivity of $\mu_\lambda$ with the subadditivity of the $\Phi$-norm.
\end{proof}

\begin{proposition}
\label{r:inequality}
Let $\Phi$ be a Orlicz function with superlinear growth at infinity. Then
\[
\mathscr{I}_1^m(B) \leq c(\Lambda,\Phi) \,\mathscr{I}_\Phi^m(B), \ \ \text{ for every Borel set $B \subset {\mathbb{R}^n}$},
\]
for some positive constant $c(\Lambda,\Phi)$ depending only on $\mathcal{L}^l(\Lambda)$ and on $\Phi$. 
\end{proposition}
\begin{proof}
     Since $\Phi$ has superlinear growth at infinity, there exists $M > 0$ such that $\Phi(t) \geq t$ for every $t \geq M$. Define $g_B(\lambda) := \mu_\lambda(B)$ for every Borel set $B \subset \mathbb{R}^n$ and every $\lambda \in \Lambda$. 

If $\|g_B\|_\Phi = 0$, then $g_B(\lambda) = 0$ for $\mathcal{L}^l$-a.e.\ $\lambda \in \Lambda$, and hence also $\|g_B\|_1 = 0$. Otherwise, we can write

    \begin{align*}
       \int_\Lambda \frac{g_B(\lambda)}{\|g_B\|_\Phi} \, d\lambda &= \int_{\{g_B < 2\|g_B\|_\Phi M\}} g_B(\lambda) \, d\lambda + \int_{\{g_B \geq 2\|g_B\|_\Phi M\}} g_B(\lambda) \, d\lambda \\
       &\leq  \mathcal{L}^l(\Lambda) 2 M + \int_{\{g_B \geq 2\|g_B\|_\Phi M\}} \Phi\bigg(\frac{g_B(\lambda)}{2\|g_B\|_\Phi}\bigg) \, d\lambda \\
       &\leq  \mathcal{L}^l(\Lambda) 2 M + 1.
    \end{align*}
    Therefore, for every Borel set $B \subset \mathbb{R}^n$ we deduce
    \begin{equation}
        \zeta_1(B)=\int_\Lambda \mu_\lambda(B) \, d\lambda =  \int_\Lambda g_B(\lambda)\, d\lambda \leq \big(\mathcal{L}^l(\Lambda) 2 M + 1 \big) \|g_B\|_\Phi= \big(\mathcal{L}^l(\Lambda) 2 M + 1 \big)  \zeta_\Phi(B).
    \end{equation}
    By setting $c(\Lambda, \Phi) := \mathcal{L}^l(\Lambda) \cdot 2M + 1$, the proposition follows immediately from the construction of the measures $\mathscr{I}^m_\Phi$.

\end{proof}

\subsection{Lifting $\mathscr{I}^m_1$ to a measure on the product space $\mathbb{R}^n \times \Lambda$ }

We are interested in finding a suitable extension of the measure $\mathscr{I}^m_1$, given by \eqref{e:caratheodoryc2}, to the product space $\mathbb{R}^n \times \Lambda$. This can be achieved by following a Fubini's type construction as follows. For every couple of Borel sets $B \subset \mathbb{R}^n$ and $U \subset \Lambda$ we define the set function
\begin{equation}
\label{e:hatzeta.1}
    \hat{\zeta}(B \times U) := \int_U \mu_\lambda(B) \, d\lambda.
\end{equation}
The outer measure $\hat{\mathscr{I}}_m \colon 2^{\mathbb{R}^n \times \Lambda} \to [0,\infty]$ is then defined as
\begin{equation}
\label{e:fubinicon}
  \hat{\mathscr{I}}_m(A):=  \inf \sum_{i=1}^\infty \hat{\zeta}(B_i \times U_i), 
\end{equation}
where the infimum is taken with respect to all sequence of Borel sets $B_i \subset \mathbb{R}^n$ and $U_i \subset \Lambda$ such that
\[
A \subset \bigcup_{i=1}^\infty B_i \times U_i.
\]

In the next proposition, we collect some properties of the measure $\hat{\mathscr{I}}_m$. Although $\hat{\mathscr{I}}_m$ is not a product measure, since one of its factors is a family of measures $(\mu_\lambda)$ rather than a fixed measure, the proof follows the argument used in the proof of Fubini’s theorem in \cite[Subsection 2.6.2]{fed1}. For completeness, we present a detailed proof in Appendix \ref{a:fubini}.

\begin{proposition}[Measurability properties of $\hat{\mathscr{I}}_m$]
\label{p:fubini}
   Let $(\mu_\lambda)_{\lambda \in \Lambda}$ be a measurable family of Borel regular measures in $\mathbb{R}^n$ and let $\Phi$ be a Orlicz function. The measure $\hat{\mathscr{I}}_m$ defined in \eqref{e:fubinicon} satisfies the following properties
    \begin{enumerate}
        \item $\hat{\mathscr{I}}_m$ is a Borel regular measure
        \item If $B \subset \mathbb{R}^n$ and $U \subset \Lambda$ are Borel sets, then
        \begin{equation}
        \label{e:fubprodset}
            \hat{\mathscr{I}}_m(B \times U) = \int_U \mu_\lambda(B) \, d\lambda
        \end{equation}
        \item If $A \subset \mathbb{R}^n \times \Lambda$ is a Borel set with finite $\hat{\mathscr{I}}_m$-measure, then
        \begin{align}
        \label{e:fubprodset1}
           \lambda \mapsto \mu_\lambda(A_\lambda) \text{ is $\mathcal{L}^l$-measurable} \quad \text{ and } \quad
           \hat{\mathscr{I}}_m(A) = \int_{\Lambda} \mu_\lambda(A_\lambda) \, d\lambda.
        \end{align}
    \end{enumerate}
\end{proposition}

\begin{remark}
Formula \eqref{e:fubprodset} immediately implies
\begin{equation}
    \label{e:equal200}
    \pi_{1\sharp}\hat{\mathscr{I}}_m(E) = \mathscr{I}^m_1(E), \ \ E \subset \mathbb{R}^n, 
\end{equation}
where $\pi_1 \colon \mathbb{R}^n \times \Lambda \to \mathbb{R}^n$ dentes the orthogonal projection into the first factor.
\end{remark}

 \begin{remark}
 \label{p:nullset}
 Let $A \subset \mathbb{R}^n \times \Lambda$ be a set. Then, the condition $\hat{\mathscr{I}}_m(A)=0 $ implies $\mu_\lambda(A_\lambda)=0$ for $\mathcal{L}^l$-a.e. $\lambda \in \Lambda$. Indeed, it is enough to use the Borel regularity of the measure $\hat{\mathscr{I}}_m$ together with formula \eqref{e:fubprodset1}.
 \end{remark}
 
The above proposition shows that the measure $\hat{\mathscr{I}}_m$ inherits all the measurability properties of a genuine product measure, except for the key symmetry required by Fubini’s theorem, namely, the possibility of exchanging the order of integration when evaluating the measure of measurable sets. This lack of symmetry is evident in the definition of $\hat{\zeta}$, since the relation defining in \eqref{e:hatzeta.1} is not symmetric in the variables $x$ and $\lambda$, unlike in the classical product measure setting.

However, as we will see in the next result, when the Orlicz function $\Phi$ grows superlinearly at infinity, a Fubini-type property still holds: the order of integration can, in a precise sense, be interchanged. This property is established in the next proposition and plays a fundamental role in the subsequent analysis.

\begin{proposition}
\label{p:keyprop}
Let $(\mu_\lambda)_{\lambda \in \Lambda}$ be a measurable family of Borel regular measures in $\mathbb{R}^n$ and let $\Phi$ be a Orlicz function with superlinear growth at infinity . Assume that the measure $\mathscr{I}_\Phi^m$ constructed as in \eqref{e:caratheodoryc2} is finite. Then, the measure $\hat{\mathscr{I}}_m$ is finite and the disintegration 
\begin{equation}
    \label{e:keyprop1}
    \hat{\mathscr{I}}_m  = \eta_x \otimes \mathscr{I}^m_1,
\end{equation}
satisfies $\eta_x \ll \mathcal{L}^l$ for $\mathscr{I}^m_1$-a.e. $x \in \mathbb{R}^n$.
\end{proposition}
\begin{proof}
 
 The finiteness of $\hat{\mathscr{I}}_m$ follows from the fact that, Proposition \ref{r:inequality} implies $\mathscr{I}^m_1(\mathbb{R}^n) < \infty$, and, as a consequence, formulas \eqref{e:equal3} and \eqref{e:fubprodset} imply $\hat{\mathscr{I}}_m(\mathbb{R}^n \times \Lambda) < \infty$ and $\pi_{1\sharp} \hat{\mathscr{I}}_m = \mathscr{I}^m_1$, where $\pi_1 \colon \mathbb{R}^n \times \Lambda \to \mathbb{R}^n$ denotes the projection onto the first factor. Therefore, the disintegration in \eqref{e:keyprop1} is justified.

 If $x_0 \in \mathbb{R}^n$ we can consider for every $r>0$ such that $B_r(x_0) \subset \mathbb{R}^n$ the $\mathcal{L}^l$-measurable maps $f_r(\lambda):= \mu_\lambda(B_r(x_0))/\mathscr{I}^m_1(B_r(x_0) )$. Notice that by \eqref{e:sigmasub}
\begin{align}
\label{e:keyprop5}
\|f_r\|_{\Phi} = \frac{\zeta_\Phi(B_r(x_0))}{\mathscr{I}^m_1(B_r(x_0))} \leq \frac{\mathscr{I}_\Phi^m(B_r(x_0))}{\mathscr{I}^m_1(B_r(x_0) )}.
\end{align}
Applying Radon-Nikodym's theorem for the two Radon measures $\mathscr{I}_\Phi^m ,\mathscr{I}^m_1 $ we obtain 
\begin{equation}
\label{e:keyprop5.1}
    \lim_{r \to 0^+} \frac{\mathscr{I}_\Phi^m(B_r(x_0) )}{\mathscr{I}^m_1(B_r(x_0))} \text{ exists and is finite for $\mathscr{I}^m_1$-a.e. $x_0 \in \mathbb{R}^n$}.
\end{equation}
Now let $\varphi \in C^{0}_c(\Lambda)$ be non negative and notice that by \eqref{e:fubprodset1}
\begin{equation}
\label{e:keyprop2}
\frac{1}{\mathscr{I}^m_1(B_r(x_0))}\int_{\mathbb{R}^n \times \Lambda} \mathbbm{1}_{B_r(x_0)}\varphi \, d\hat{\mathscr{I}}_m = \int_{\Lambda}\varphi \, \frac{\mu_\lambda(B_r(x_0) )}{\mathscr{I}^m_1(B_r(x_0))}  \, d\lambda =
\int_{\Lambda}\varphi\,  f_r \, d\lambda.
\end{equation}
On the other hand, by using \eqref{e:keyprop1} we know also
\begin{equation}
\label{e:keyprop3}
\frac{1}{\mathscr{I}^m_1(B_r(x_0) )}\int_{\mathbb{R}^n \times \Lambda} \mathbbm{1}_{B_r(x_0)}\varphi \, d\hat{\mathscr{I}}_m = \mint_{B_r(x_0)} \bigg( \int_{\Lambda} \varphi \, d\eta_x \bigg)d\mathscr{I}^m_1 .
\end{equation}
Now consider $D$ a countable dense subset of all non-negative functions in $C^0_c(\Lambda)$. By using \eqref{e:keyprop3} together with Lebesgue's differentiation theorem, for $\mathscr{I}^m_1$-a.e. $x_0 \in \mathbb{R}^n$ and for every $\varphi \in D$ we have  
\begin{equation}
\label{e:keyprop4}
    \lim_{r \to 0^+} \frac{1}{\mathscr{I}^m_1(B_r(x_0))}\int_{\mathbb{R}^n \times \Lambda} \mathbbm{1}_{B_r(x_0)}\varphi \, d\hat{\mathscr{I}}_m = \int_\Lambda \varphi \, d\eta_{x_0}.
\end{equation}
If we look to those $x_0 \in \mathbb{R}^n$ for which \eqref{e:keyprop4} holds true, we can make use of \eqref{e:keyprop5}-\eqref{e:keyprop2} to deduce that up to pass through a not-relabeled subsequence on $r$ (depending on $x_0$), Proposition \ref{p:dunford-pettis} tells us that $f_r \rightharpoonup f$ weakly in $L^1(\Lambda)$, and hence
\begin{equation}
    \label{e:keyprop6}
    \int_{\Lambda} \varphi \, d\eta_{x_0} = 
    \int_{\Lambda} \varphi \, f \, d\lambda, \ \ \ \varphi \in D.
\end{equation}
Since $D$ is dense we deduce from \eqref{e:keyprop6} that for $\mathscr{I}^m_1 $-a.e. $x_0 \in \mathbb{R}^n$ we have $\eta_{x_0} = f \, \mathcal{L}^l$ as measures. This gives the desired assertion.
\end{proof}

We are now in position to define the set $\hat{E}$ appearing in Theorem \ref{t:structure} for our relevant class of integralgeometric measures.

\begin{definition}[The set $\hat{E}$]
\label{d:defhatE}
    Assume that $\mathscr{I}^m_1$ is a finite integralgeometric measure. Then we define the set $\hat{E} \subset \mathbb{R}^n \times \Lambda$ as
    \begin{equation}
        \hat{E}:= \{(x,\lambda) \in \mathbb{R}^n \times \Lambda \ | \  \tilde{\eta}^\lambda_{P_\lambda(x)}(\{x\}) >0 \},
    \end{equation}
    where $\tilde{\eta}^\lambda_y$ is the good representative in the disintegration of $\mu_\lambda$ with respect to the map $P_\lambda \colon \mathbb{R}^n \to \mathbb{R}^m$ introduced in Definition \ref{d:goodrepdis}.
\end{definition}

\begin{proposition}
    The set $\hat{E}$ introduced above satisfies that $\hat{E}_\lambda$ is a Borel set in $\mathbb{R}^n$ for every $\lambda \in \Lambda$.
\end{proposition}

\begin{proof}
    Fix $\lambda \in \Lambda$. To simplify the notation we drop the dependence from $\lambda$ and simply write $\tilde{\eta}_y$ in place of $\tilde{\eta}^\lambda_y$. 
    
    We claim that the function $f \colon \mathbb{R}^m \times  \mathbb{R}^n \to [0,\infty)$ defined as $f(y,x):= \tilde{\eta}_y(\{x\})$ is Borel measurable for every $\lambda \in \Lambda$. To prove the claim, we start by showing that, for every $\varphi \in C^0_c(\mathbb{R}^n)$, the function $f_{\varphi,r} \colon \mathbb{R}^m \to [0,\infty)$ defined as
    \[
    f_{\varphi,r}(y) := 
    \begin{cases}
    \frac{1}{P_{\lambda \sharp}\mu(B_r(y))} \int_{P_\lambda^{-1}(B_r(y))} \varphi \, d\mu \ &\text{ if $y \in \text{supp}(P_{\lambda \sharp}\mu)$} \\
    0 & \text{ otherwise},
    \end{cases}
    \]
   is Borel measurable. Indeed, by choosing a family of mollifiers $(\phi_\epsilon)_{\epsilon >0}$, the function $f_{\varphi,r,\epsilon} \colon \mathbb{R}^m \to [0,\infty)$ defined as
    \[
    f_{\varphi,r,\epsilon}(y) := 
    \begin{cases}
    \frac{1}{P_{\lambda \sharp}(\mu * \phi_\epsilon)(B_r(y))} \int_{P_\lambda^{-1}(B_r(y))} \varphi \, d(\mu * \phi_\epsilon),  &\text{ if $y \in \text{supp}(P_{\lambda \sharp}\mu)$} \\
    0 & \text{ otherwise},
    \end{cases}
    \]
    is Borel measurable. This is because the map $y \mapsto \int_{P_\lambda^{-1}(B_r(y))} \varphi \, d(\mu * \phi_\epsilon)$ is continuous on the open set $\{y \ | \ P_{\lambda \sharp}(\mu *\phi_\epsilon)(B_r(y)) >0\}$ due to the fact that $\mu * \phi_\epsilon \ll \mathcal{L}^n$. Therefore, being $\text{supp}(P_{\lambda \sharp}\mu) \subset \{y \ | \ P_{\lambda \sharp}(\mu *\phi_\epsilon)(B_r(y)) >0\}$, and being the support of a Radon measure a Borel measurable set, we immediately deduce the Borel measurability of $f_{\varphi,r}$.  
    
    Since $\mu * \phi_\epsilon(U) \to \mu(U)$ as $\epsilon \to 0^+$ for any open set $U \subset \mathbb{R}^n$, we immediately infer that $f_{\varphi,r,\epsilon} \to f_{\varphi,r}$ as $\epsilon \to 0^+$ pointwise everywhere in $\mathbb{R}^m$. As a consequence, that $f_{\varphi,r}$ is a Borel measurable function. 
    
    From the very definition of $\tilde{\eta}_y$, we have that, defining the Borel set (Borel since both the liminf and the limsup below can be computed by restricting $r \in (0,\infty) \cap \mathbb{Q}$ due to left continuity of the map $r \mapsto  \frac{1}{P_{\lambda \sharp}\mu(B_r(y))} \int_{P_\lambda^{-1}(B_r(y))} \varphi \, d\mu$)
    \[
    Y_\varphi:= \{y \in \text{supp}(P_{\lambda \sharp}\mu) \ | \ \liminf_r f_{\varphi,r}(y) = \limsup_r f_{\varphi,r}(y)  \}
    \]
    then we have
    \[
    \lim_{r \to 0+} f_{\varphi,r}(y)=\int_{\mathbb{R}^n} \varphi \, d\tilde{\eta}_y, \ \ \text{ for every $y \in Y_\varphi$}.
    \]
    This tells us that the function $f_{\varphi} \colon \mathbb{R}^m \to [0,\infty)$ defined as
    \[
    f_{\varphi}(y):=
    \begin{cases}
    \int_{\mathbb{R}^n} \varphi \, d\tilde{\eta}_y, \ \text{ if $y \in Y_\varphi$}\\
    0, \ \text{ otherwise},
    \end{cases}
    \]
    is Borel measurable. 

    Choose $\varphi \in C^0_c(\mathbb{R}^n)$ with $\text{supp}(\varphi) \subset U_1(0)$ and $\varphi(0)=1$ and define $\varphi_r(z):= \varphi(z/r)$ for every $r>0$. Notice that, the set 
    \[
    D_\lambda:=\bigg\{y \in \text{supp}(P_{\lambda \sharp}\mu) \ \bigg| \ \lim_{r \to 0^+} \frac{1}{P_{\lambda \sharp}\mu(B_r(y))} \mu \restr P_{\lambda}^{-1}(B_r(y)) \text{ exists in the weak sense of measures} \bigg\}
    \]
    is contained in $Y_{\varphi_r}$ for every $r >0$. We further observe that, by considering a countable and dense subset $\mathcal{D} \subset C^0_c(\mathbb{R}^n)$, it is not difficult to verify the equality
    \[
    D_\lambda = \bigcap_{\varphi \in \mathcal{D}} Y_\varphi,
    \]
    from which we deduce that the set $D_\lambda$ is Borel. Therefore, the function $\mathbbm{1}_{D_\lambda}f_\varphi$ is Borel measurable for every $\varphi \in C^0_c(\mathbb{R}^n)$.

    Now define $f_r(y,x) \colon \mathbb{R}^m \times \mathbb{R}^n \to [0,\infty)$ as
    \[
    f_r(y,x) := \mathbbm{1}_{D_\lambda}(y)f_{\varphi_r \circ \tau_x}(y),
    \]
    where $\tau_x \colon \mathbb{R}^n \to \mathbb{R}^n$ is defined as $\tau_x(z):= z+x$. We already know that for every $x \in \mathbb{R}^n$ the map $f_r(\cdot,x)$ is Borel measurable. Moreover, it is not difficult to verify that, since $\lim_{x \to x'} \varphi_r \circ \tau_x(z) = \varphi_r \circ \tau_{x'}(z)$ pointwise for every $z \in \mathbb{R}^n$, then, for every $y \in \mathbb{R}^m$ the map $f_r(y,\cdot)$ is continuous. Hence $f_r$ is a Caratheodory function, and hence it is Borel jointly measurable on the product space $\mathbb{R}^m \times \mathbb{R}^n $. Finally, by exploiting that for every $x \in \mathbb{R}^n$ it holds $\lim_{r \to 0^+}\varphi_r \circ \tau_x(z)=  \mathbbm{1}_{\{x\}}(z)$ pointwise for every $z \in \mathbb{R}^n$, we infer from the Lebesgue's dominated convergence theorem that 
\[
\lim_{r \to 0^+} \int_{\mathbb{R}^n}\varphi_r \circ \tau_x \, d\tilde{\eta}_y = \tilde{\eta}_y(\{x\}), \ \ \text{ for every $(y,x) \in D_\lambda \times \mathbb{R}^n$}.
\]
Since by definition we have $\tilde{\eta}_y =0$ for every $y \in \mathbb{R}^m, \setminus D_\lambda$, we finally infer that 
\[
\lim_{r \to 0^+} f_r(y,x) = f(x,y) = \tilde{\eta}_y(\{x\}), \ \ \text{ for every $(y,x) \in \mathbb{R}^m \times \mathbb{R}^n$}, 
\]
from which our desired claim immediately follows. 

To conclude, we simply observe that $\tilde{\eta}_{P_\lambda(x)}(\{x\})$ coincides with the map $f(P_\lambda(x),x)$, which is Borel measurable as it is the composition of two Borel measurable maps.
\end{proof}

\begin{proposition}
\label{p:cor}
Let $(\mu_\lambda)_{\lambda \in \Lambda}$ be a measurable family of Borel regular measures in $\mathbb{R}^n$ and let $\Phi$ be a Orlicz function with superlinear growth at infinity. Assume that the measure $\mathscr{I}_\Phi^m$ constructed as in \eqref{e:caratheodoryc2} is finite. Moreover, assume that the measures $\tilde{\eta}^\lambda_y$ given by Definition \ref{d:goodrepdis} in the disintegration
\[
\mu_\lambda = \tilde{\eta}^\lambda_y \otimes P_{\lambda\sharp}\mu_\lambda,
\]
are $0$-rectifiable for $\mathcal{L}^l$-a.e. $\lambda \in \Lambda$ and for $\mathcal{L}^m$-a.e. $y \in \mathbb{R}^m$. 

Then, if $S \subset \mathbb{R}^n \times \Lambda$ is $(\mathscr{I}_{\Phi}^m \otimes \mathcal{L}^l)$-negligible, it holds 
\begin{equation}
\label{e:cor1}
    P_{\lambda\sharp}\mu_\lambda(P_\lambda(S_\lambda \cap \hat{E}_\lambda))=0, \ \ \text{ for $\mathcal{L}^l$-a.e. $\lambda \in \Lambda$}.
\end{equation}
where $\hat{E} \subset \Lambda \times \mathbb{R}^n$ is the set associated to the family $(\mu_\lambda)$ introduced in Definition \ref{d:defhatE}
\end{proposition}
\begin{proof}
Since $(\mathscr{I}_{\Phi}^m \otimes \mathcal{L}^l)$ is Borel regular we can reduce ourselves to prove the proposition in the case $S$ is Borel. By our hypothesis we can apply Tonelli's theorem to deduce that
\begin{equation}
\label{e:cor2}
\mathcal{L}^l(S_x)=0, \ \ \ \   \mathscr{I}_\Phi^m \text{-a.e. }x \in \mathbb{R}^n.
\end{equation}
Since $\mathscr{I}_\Phi^m(\mathbb{R}^n)<\infty$ implies $\hat{\mathscr{I}}_m(\mathbb{R}^n \times \Lambda) < \infty$ we can make use of disintegration theorem as in the proof of Proposition \ref{p:keyprop} to find a family of probability measures on $\Lambda$, say $(\eta_x)_{x \in \mathbb{R}^n}$, such that 
\begin{equation}
\label{e:cor3}
\hat{\mathscr{I}}_m  = \eta_x \otimes \mathscr{I}^m_1.
\end{equation}
 From Proposition \ref{p:keyprop} we already know that $\eta_x \ll \mathcal{L}^l$ for $\mathscr{I}^m_1$-a.e. $x \in \mathbb{R}^n$. Hence, \eqref{e:cor2} gives $\hat{\mathscr{I}}_m(S )=0$. Therefore, Remark \ref{p:nullset} allows us write
 \begin{equation}
 \label{e:cor3.1}
     \mu_\lambda(S_\lambda ) =0, \ \ \text{ for $\mathcal{L}^l$-a.e. }\lambda \in \Lambda.
 \end{equation}
The desired result will follow if we show that condition \eqref{e:cor3.1} implies \eqref{e:cor1}. To this purpose, fix $\lambda \in \Lambda$, and assume by contradiction that \eqref{e:cor3.1} holds while \eqref{e:cor1} is not satisfied. Thanks to the Borel regularity of the measure $\mu_\lambda$, we can find a Borel set $B \subset \mathbb{R}^n$ such that $S_\lambda \cap \hat{E}_\lambda \subset B$ and 
\[
\mu_\lambda(B) = \mu_\lambda(S_\lambda \cap \hat{E}_\lambda)  \leq \mu_\lambda(S_\lambda)=0.
\]

Notice that, since the projected sets $P_\lambda(B)$ and $P_\lambda(\hat{E}_\lambda \cap B)$ are $P_{\lambda \sharp} \mu_\lambda$-measurable (recall that $B$ and $\hat{E}_\lambda$ are Borel sets, so we can apply the measurable projection theorem \cite[Proposition 8.4.4]{coh} to conclude that their projections are universally measurable, and hence $P_{\lambda \sharp} \mu_\lambda$-measurable because $\mu_\lambda$ is Borel and finite), and they satisfy

\[
P_{\lambda \sharp} \mu_\lambda(P_\lambda(B \cap \hat{E}_\lambda)) \geq  P_{\lambda \sharp} \mu_\lambda(P_\lambda(S_\lambda \cap \hat{E}_\lambda)) > 0.
\]
Clearly, since $\tilde{\eta}^\lambda_y$ is $0$-rectifiable, we have, by construction of the set $\hat{E}_\lambda$, the representation
\[
\tilde{\eta}^\lambda_y = \sum_{x \in \hat{E}_\lambda} c_x^\lambda \big(\delta_{x} \restr P^{-1}_\lambda(y)\big),
\]
for some \emph{strictly positive} constants $c^\lambda_x > 0$. This implies that the following implication holds:
\begin{equation}
\label{e:htc}
y \in P_\lambda(\hat{E}_\lambda \cap B) \ \Rightarrow\ \tilde{\eta}^\lambda_y(\hat{E}_\lambda \cap B) > 0, \quad \text{for $P_{\lambda \sharp} \mu_\lambda$-a.e.\ $y \in \mathbb{R}^m$}.
\end{equation}
We can thus write

\begin{align*}
    \mu_\lambda(B) = \int_{P_\lambda(B)} \tilde{\eta}^\lambda_y(B) \, P_{\lambda \sharp} \mu_\lambda(y) &= \int_{\{y \in P_\lambda(B) \ | \ \tilde{\eta}^\lambda_y \text{ is non-null and $0$-rectifiable} \}} \tilde{\eta}^\lambda_y(B) \, P_{\lambda \sharp} \mu_\lambda(y) \\
    &= \int_{\{y \in P_\lambda(\hat{E}_\lambda \cap B) \ | \ \tilde{\eta}^\lambda_y \text{ is non-null and $0$-rectifiable} \}} \tilde{\eta}^\lambda_y(\hat{E}_\lambda \cap B) \, P_{\lambda \sharp} \mu_\lambda(y).
\end{align*}
Using the implication~\eqref{e:htc}, we deduce that the last integral in the above formula must be strictly positive, thus contradicting the assumption $\mu_\lambda(B) = 0$.  
This concludes the proof.

\end{proof}

We are now in position to prove the central result of this paper. 

\begin{theorem}
\label{t:central}
Let $(\mu_\lambda)_{\lambda \in \Lambda}$ be a measurable family of Borel regular measures in $\mathbb{R}^n$ and let $\Phi$ be a Orlicz function with superlinear growth at infinity. Assume that the measure $\mathscr{I}_\Phi^m$ constructed as in \eqref{e:caratheodoryc2} is finite and integralgeometric. Then 
\[
\mathscr{I}_\Phi^m(\mathbb{R}^n \setminus R)=0,
\]
for some countably $m$-rectifiable Borel set $R \subset \mathbb{R}^n$. If in addition there exists $\alpha \in (0,1]$ such that $P_\lambda \colon \mathbb{R}^n \to \mathbb{R}^m$ is $\alpha$-H\"older for $\mathcal{L}^l$-a.e. $\lambda \in \Lambda$, then we have also $\mathscr{I}^m_\Phi \ll \mathcal{H}^{\alpha m} \restr R$.
\end{theorem}

\begin{proof}
We claim that the Borel set $E$ and the measure $\mathscr{I}_\Phi^m$ satisfy the hypotheses of Theorem \ref{t:structure}, with $\sigma_\lambda$ replaced by $P_{\lambda\sharp}\mu_\lambda$. In fact, it suffices to verify condition \eqref{e:structure1}. This will follow from Proposition \ref{p:cor} once we verify that the family $(\mu_\lambda)$, which generates the integral geometric measure $\mathscr{I}_\Phi^m$, satisfies all the hypothesis. To this regard, we need only to check that the measures $\tilde{\eta}^\lambda_y$ are $0$-rectifiable for $\mathcal{L}^l$-a.e. $\lambda \in \Lambda$ and for $P_{\lambda \sharp}\mu_\lambda$-a.e. $y \in \mathbb{R}^m$. But this directly follows by applying conditions \eqref{e:condigm1}-\eqref{e:condigm6} to the disintegration of $\mu_\lambda$ with respect to $P_\lambda$. 

We find therefore a countably $(\mathscr{I}_\Phi^m ,m)$-rectifiable and $\mathscr{I}_\Phi^m$-measurable set $R'$ such that $\mathbb{R}^n \setminus R'$ is a $\sigma$-compact purely $(\mathscr{I}_\Phi^m ,m)$-unrectifiable set and satisfies \eqref{e:structure3}, namely for $\mathcal{L}^l$-a.e. $\lambda \in \Lambda$ we have
\begin{equation}
    P_{\lambda\sharp}\mu_\lambda(P_\lambda( \hat{E}_\lambda  \setminus R'))=0,
\end{equation}
where the set $\hat{E} \subset \Lambda \times \mathbb{R}^n$ has been introduced in Definition \ref{d:defhatE}. Moreover, since $\mu_\lambda$ disintegrates atomically with respect to $P_\lambda$, by arguing as in the proof of the above proposition, we infer that $\mu_\lambda = \mu_\lambda \restr \hat{E}_\lambda$ for almost every $\lambda \in \Lambda$.  
This in turn immediately implies via property \eqref{e:zeroiff} that $\mathscr{I}_1^m(\mathbb{R}^n \setminus R') = 0$. From Proposition~\ref{p:borelreg}, we also have $\mathscr{I}_\Phi^m(\mathbb{R}^n \setminus R') = 0$.  
As a consequence, the first part of the theorem follows by the Borel regularity of $\mathscr{I}^m_\Phi$.

To prove the remaining part we observe that 
\[
\mathcal{H}^m(P_\lambda(B)) \leq \sup_{x \neq x'} \bigg(\frac{P_\lambda(x)-P_\lambda(x')}{|x-x'|^\alpha} \bigg)^m\mathcal{H}^{\alpha m}(B), \ \ \text{ for every $B \subset \mathbb{R}^n$ Borel},
\]
and arguing as in \cite[Theorem 2.10.10, Corollary 2.10.11]{fed1} we infer
 \begin{equation}
 \label{e:op100}
 \int_{\mathbb{R}^m} \mathcal{H}^0(R \cap B \cap P^{-1}_\lambda(y)) \, dy \leq \sup_{x \neq x'} \bigg(\frac{P_\lambda(x)-P_\lambda(x')}{|x-x'|^\alpha} \bigg)^m \,  \mathcal{H}^{\alpha m}(R \cap B), \ \ B \subset \mathbb{R}^n \text{ Borel}.
 \end{equation}
Since $\eta^\lambda_y $ is concentrated on $P^{-1}_\lambda(y)$ for $P_{\lambda\sharp}\mu_\lambda$-a.e. $y \in \mathbb{R}^m$, we deduce that $\eta^\lambda_y \restr R$ is concentrated on $R \cap P^{-1}_\lambda(y)$ for $P_{\lambda\sharp}\mu_\lambda$-a.e. $y \in \mathbb{R}^m$. From \eqref{e:op100} we can thus write for $\mathcal{L}^l$-a.e. $\lambda \in \Lambda$
 \[
 \mu_\lambda \restr R = \eta^\lambda_y \restr (R \cap P^{-1}_\lambda(y))  \otimes P_{\lambda\sharp}\mu_\lambda \ll \mathcal{H}^0 \restr (R \cap P^{-1}_\lambda(y))  \otimes P_{\lambda\sharp}\mu_\lambda \ll \mathcal{H}^{\alpha m} \restr R
 \]
 where we used the hypothesis $P_{\lambda\sharp}\mu_\lambda \ll \mathcal{L}^m$. Finally, since $\mathscr{I}^m_1(E \setminus R)=0$, we can make use of formula \eqref{e:equal3} together with $\mu_\lambda \restr R \ll \mathcal{H}^{\alpha m} \restr R$ to infer $\mathscr{I}^m_\Phi \ll \mathscr{I}^m_1 \ll \mathcal{H}^{\alpha m} \restr R$.
 \end{proof}

\section{Structure of Federer's Integralgeometric measure}
\label{s:application}

In this section, we provide a positive answer to problem (Q.1) for every $p >1$. Throughout the remainder of the section, we restrict our attention to Orlicz functions of the form $\Phi(t)= t^p$ for some exponent $p \geq 1$, and we consider as our transversal family the set of orthogonal projections.

For the reader's convenience, we recall the relevant notation. To properly address the problem, we introduce the exact renormalization constant $\beta_p(n,m)$ used in the construction of the Integralgeometric measure $\mathcal{I}^m_p$. For a given exponent $p \in [1, \infty]$, the constant $\beta_p(n,m)$ is defined as the one satisfying (see \cite[Subsection 2.7.16)(6)]{fed1})
\begin{equation}
\beta_p(n,m)= \bigg(\int_{\text{Gr}(n,m)} |\langle d\pi_V, \tau \rangle|^p \, d\gamma_{n,m}(V) \bigg)^{\frac{1}{p}}
\end{equation}
for every unitary simple $m$-vector $\tau$ of $\mathbb{R}^n$ and where $\langle \cdot, \cdot \rangle$ denotes the duality pairing between $m$-covector and $m$-vector of $\mathbb{R}^n$.  For every $V \in \text{Gr}(n,m)$ and every Borel set $B \subset \mathbb{R}^n$ we set
\begin{equation}
    \label{e:fedigm1}
    g_B(V):= \mathcal{H}^m(\pi_V(B)),
\end{equation}
and the set function
\begin{equation}
    \label{e:fedigm2}
    \eta_p(B):=\frac{\|g_B\|_{L^p(\text{Gr}(n,m))}}{\beta_p(n,m)}.
\end{equation}
The $m$-dimensional Integralgeometric measure in $\mathbb{R}^n$ with exponent $p \in [0,\infty]$, denoted by $\mathcal{I}^m_p$, is then defined via Caratheodory's construction as in \eqref{e:caratheodoryc2} via the Gauge function $\eta_p$.

We observe that, in order to show $\mathcal{I}^m_{p_1} = \mathcal{I}^m_{p_2}$ for every $1 < p_1 \leq p_2 \leq \infty$, it is sufficient to prove that, for every $p > 1$ and every Borel set $E \subset \mathbb{R}^n$ with finite $\mathcal{I}^m_p$-measure, there exists a countably $m$-rectifiable Borel set $R \subset \mathbb{R}^n$ such that
\begin{equation}
\label{e:mainfed}
\mathcal{I}^m_p(E \setminus R) = 0.
\end{equation}
Indeed, if for a Borel set $E \subset \mathbb{R}^n$ at least one of the quantities $\mathcal{I}^m_{p_1}(E)$ or $\mathcal{I}^m_{p_2}(E)$ is finite, then there exists a countably $m$-rectifiable Borel set $R \subset \mathbb{R}^n$ such that either $\mathcal{I}^m_{p_1}(E \setminus R) = 0$ or $\mathcal{I}^m_{p_2}(E \setminus R) = 0$. Since the measures $\mathcal{I}^m_p$ share the same family of negligible sets (this follows immediately from Lemma~\ref{l:fedigmt} below together with condition \eqref{e:samenegset}), we deduce that both conditions
\[
\mathcal{I}^m_{p_1}(E \setminus R) = 0 \quad \text{and} \quad \mathcal{I}^m_{p_2}(E \setminus R) = 0
\]
hold. Moreover, by observing that the renormalization constant $\beta_p(n,m)$ is defined precisely so that
\[
\mathcal{I}^m_{p_1} \restr W = \mathcal{I}^m_{p_2} \restr W \quad \text{for every affine $m$-plane } W \subset \mathbb{R}^n,
\]
it is not difficult to show that $\mathcal{I}^m_{p_1}$ and $\mathcal{I}^m_{p_2}$ coincide with $\mathcal{H}^m$ on every countably $m$-rectifiable Borel subset of $\mathbb{R}^n$ (see \cite[Theorem~3.3.13]{fed1}):
\begin{proposition}
\label{p:coihmim}
    Let $R \subset \mathbb{R}^n$ be a countably $m$-rectifiable Borel set. Then
    \[
    \mathcal{I}^m_{p}(B) = \mathcal{H}^m(B), \ \ \text{ for every Borel set $B \subset R$},
    \]
    whenever $1 \leq p < \infty$.
\end{proposition}
Hence, the identity $\mathcal{I}^m_{p_1} = \mathcal{I}^m_{p_2}$ follows directly from condition~\eqref{e:mainfed} via Proposition~\ref{p:coihmim}.

\vspace{3mm}

 In order to prove condition~\eqref{e:mainfed}, we rewrite $\mathcal{I}^m_p$ as an integralgeometric measure $\mathscr{I}^m_p$ in the sense of Definition~\ref{d:igm}. This reformulation does not follow directly from the definition, due to the fact that the map $B \mapsto g_V(B)$ is not a measure.

 In order to overcome this issue we proceed as follows. Define for every $V \in \text{Gr}(n,m)$ and for every $B \subset \mathbb{R}^n$ Borel, the Borel measure $\mu_V$ on $\mathbb{R}^n$ by
\begin{equation}
\label{e:fedigm3.1}
\mu_V(B):= \int_{\mathbb{R}^m} \mathcal{H}^0(B \cap \pi_V^{-1}(y)) \, dy.
\end{equation}
The required measurability to define the integral in \eqref{e:fedigm3.1} can be found in \cite[Subsection 2.10.16]{fed1}. By letting $f_B(V):= \mu_V(B)$ for every $V \in \text{Gr}(n,m)$ and $B \subset \mathbb{R}^n$ Borel we define the set function 
\begin{equation}
\label{e:fedigm4.1}
    \zeta_p(B):= \|f_B\|_{L^p(\text{Gr}(n,m))}.
\end{equation}
Consider then the Borel regular measure $\mathscr{I}_p^m$ on $\mathbb{R}^n$ (see Proposition \ref{p:borelreg}) defined as in \eqref{e:caratheodoryc2} for every $E \subset \mathbb{R}^n$ by
\begin{equation}
    \label{e:fedigm6}
    \mathscr{I}_p^m(E):=\sup_{\delta>0} \, \inf \big\{ \sum_{B \in G_\delta} \zeta_p(B) \ | \ G_\delta \text{ countable Borel cover of $A$, }  \text{diam}(B)\leq \delta \big\}.
\end{equation}

We have the following lemma.

\begin{lemma}
\label{l:fedigmt}
For every $p \geq 1$ we have $\mathcal{I}^m_p = \beta_p(n,m)\mathscr{I}_p^m$.
\end{lemma}
\begin{proof}
In order to simplify the notation, we assume that the value of the constant $\beta_p(n,m)$ is $1$.

Being both measures $\mathcal{I}^m_p$ and $\mathscr{I}_p^m$ Borel regular it is enough to prove that they coincide on Borel sets. In addition, since $\eta_p(B) \leq \zeta_p(B)$ for every $B \subset \mathbb{R}^n$ Borel we have only to prove that 
\begin{equation}
\label{e:fedigm8}
    \mathcal{I}^m_p(B) \geq \mathscr{I}_p^m(B), \ \ B \subset \mathbb{R}^n \text{ Borel}.
\end{equation}
For this purpose fix $B \subset \mathbb{R}^n$ Borel and assume  without loss of generality that $\mathcal{I}^m_p(B) < \infty$. Consider for $k=1,2,\dotsc$ a sequence of Borel countable coverings of $B$, say $(D_k)$, such that $D \in D_k$ implies $\text{diam}(D) \leq 1/k$ and $ 
\lim_{k \to \infty} \sum_{D \in  D_k} \eta_p(D) = \mathcal{I}^m_p(B)$. Notice that for every $k$ 
\[
\begin{split}
\sum_{D \in D_k} \eta_p(D) &= \sum_{D \in D_k}\bigg(\int_{\text{Gr}(n,m)} \mathcal{H}^m(\pi_V(D))^p \, d\gamma_{n,m}\bigg)^{\frac{1}{p}} \\
&\geq \bigg(\int_{\text{Gr}(n,m)} \big[\sum_{D \in D_k} \mathcal{H}^m(\pi_V(D))\big]^p \, d\gamma_{n,m}\bigg)^{\frac{1}{p}} \\
&= \bigg(\int_{\text{Gr}(n,m)}\bigg(\int_{V} \mathcal{H}^0(\{D \in D_k \ | \ \pi_V^{-1}(y) \cap D \neq \emptyset\}) \,d\mathcal{H}^m(y)\bigg)^p d\gamma_{n,m}\bigg)^{\frac{1}{p}}.
\end{split}
\]
Since every $D_k$ is made of sets whose diameter is shrinking to zero as $k \to \infty$, we have
\[
\liminf_{k \to \infty}\mathcal{H}^0(\{D \in D_k \ | \ \pi_V^{-1}(y) \cap D \neq \emptyset \}) \geq \mathcal{H}^0(\pi_V^{-1}(y) \cap B), 
\]
for every $V \in \text{Gr}(n,m)$ and $y \in V$. Therefore we can apply Fatou's lemma to deduce
\begin{equation}
\label{e:fedigm7}
\begin{split}
&\mathcal{I}^m_p(B) = \liminf_{k \to \infty} \sum_{D \in D_k} \eta_p(D) \\
&\geq \bigg(\int_{\text{Gr}(n,m)}\bigg(\int_{V} \liminf_{k \to \infty}\mathcal{H}^0(\{D \in D_k \ | \ \pi_V^{-1}(y) \cap D \neq \emptyset\}) \,d\mathcal{H}^m(y)\bigg)^p d\gamma_{n,m}\bigg)^{\frac{1}{p}}\\
&\geq \bigg(\int_{\text{Gr}(n,m)}\bigg(\int_{\mathbb{R}^m} \mathcal{H}^0(\pi_V^{-1}(y) \cap B) \,d\mathcal{H}^m(y)\bigg)^p d\gamma_{n,m}\bigg)^{\frac{1}{p}} = \zeta_p( B).
\end{split}
\end{equation}
For every $\delta>0$, since $\mathcal{I}^m_p$ is a Borel measure, we easily find a countable Borel covering of $B$, say $\tilde{G}_\delta$, with $\text{diam}(\tilde{B}) \leq \delta$ for $\tilde{B} \in \tilde{G}_\delta$ and $\sum_{\tilde{B} \in \tilde{G}_\delta} \mathcal{I}^m_p(\tilde{B})=\mathcal{I}^m_p(B)$. Thanks to \eqref{e:fedigm7} we can write for every $\delta>0$
\[
\begin{split}
\mathcal{I}^m_p(B)=\sum_{\tilde{B} \in \tilde{G}_\delta}\mathcal{I}^m_p(\tilde{B}) \geq \sum_{\tilde{B} \in \tilde{G}_\delta}\zeta_p(\tilde{B})
&\geq \inf_{G_\delta} \sum_{\tilde{B} \in G_\delta}\zeta_p(\tilde{B}) \geq \mathscr{I}_p^m(B) - o(1).
\end{split}
\]
where $G_\delta$ denotes any countable Borel coverings of $B$ made of sets having diameter less or equal than $\delta$ and $o(1) \to 0$ as $\delta \to 0^+$.
Taking the limit as $\delta \to 0^+$ on both sides of the previous inequality we obtain \eqref{e:fedigm8}.
\end{proof}

We are now in position to prove the rectifiability property of $\mathcal{I}^m_p$.

\begin{proof}[Proof of Theorem \ref{t:fedpronew}]
Thanks to Lemma \ref{l:fedigmt} we know that the two measures $\beta_p(n,m)\mathcal{I}^m_p \restr E$ and $\mathscr{I}^m_p \restr E$ coincide. Eventually, since $\mathscr{I}^m_p \restr E$ is a finite integralgeometric measure in the sense of Definition \ref{d:igm}, we obtain the desired conclusion by applying Theorem \ref{t:central} together with Remark \ref{r:tratopro}.
\end{proof}

We conclude this section with a Remark.

\begin{remark}
    \label{r:intgeophi}
    Given a Orlicz function $\Phi$, Theorem~\ref{t:fedpronew} extends to the measure $\mathcal{I}^m_\Phi$, provided that $\Phi$ has superlinear growth at infinity. Moreover, Proposition \ref{p:coihmim} remains valid under the same assumption, upon appropriately choosing the renormalization constant
\[
\beta_\Phi(n,m) := \|g_\tau\|_\Phi, \qquad \text{where } g_\tau \colon \mathrm{Gr}(n,m) \to [0,1], \quad g_\tau(V) := |\langle d\pi_V, \tau \rangle|,
\]
for every simple $m$-vector $\tau$. In particular, problem (Q.1) admits a positive solution for every Orlicz function $\Phi$ with superlinear growth at infinity

\end{remark}

\section{Vitushkin's conjecture }
\label{s:vitushkin}
In this section we show how our integral geometric method allows one to make progress in the understanding of Vitushkin's conjecture. Let $m,n$ be two strictly positive integers with the following order $m < n$. For reader's convenience we recall here the relevant notation. The Favard length of a Borel set $B \subset \mathbb{R}^n$ is defined as
\begin{equation}
    \text{Fav}(B) = \int_{\text{Gr}(n,m)} \mathcal{H}^m(\pi_V(B)) \, d\gamma_{n,m}(V).
\end{equation}
 For a given Orlicz function $\Phi$ we introduce the $\Phi$-Favard length of a Borel set $B \subset \mathbb{R}^n$, denoted by $\text{Fav}_\Phi(B)$, as
\begin{equation}
    g_B(V):= \mathcal{H}^m(\pi_V(B)) \qquad \text{Fav}_\Phi(B):= \|g_B\|_\Phi.
\end{equation}
When $\Phi(t) = t^p$ for some exponent $p \geq 1$ we compactly denote $\text{Fav}_\Phi$ by $\text{Fav}_p$. Notice that, with this notation we have $\text{Fav}_1 \equiv \text{Fav}$.

Following Ahlfors \cite{ahl}, we further recall that, for a compact set $K \subset \mathbb{C}$, the analytic capacity of $K$, denoted by $\gamma(K)$, can be defined as 
\begin{equation}
   \gamma(K)= \sup |f'(\infty)|,
\end{equation}
where the supremum is taken over all analytic functions $f \colon \mathbb{C} \setminus K \to \mathbb{C}$ with $|f| \leq 1$ on $\mathbb{C} \setminus K$, and $f'(\infty)= \lim_{z \to \infty} z(f(z) -f(\infty))$.

The main result providing a sufficient geometric condition for $\gamma(K)>0$ which we rely on is due to Calder\'on \cite{cal} and reads as follows. 

\begin{theorem}[Calder\'on (1977)]
\label{t:cald}
    Let $K \subset \mathbb{C}$ be a compact set. If $\mathcal{H}^1(K \cap \emph{Im}(\varphi)) >0$ for some Lipschitz curve $\varphi \colon \mathbb{R} \to \mathbb{C}$, then $\gamma(K)>0$. 
\end{theorem}

The above result allows us to transform the problem of proving that a set has positive analytic capacity into a rectifiability problem: to prove that $K$ satisfies $\gamma(K) > 0$, we reduce the task to constructing a countably $1$-rectifiable Borel set $R \subset \mathbb{C}$ such that $\mathcal{H}^1(K \cap R) > 0$.

\subsection{A single-scale result}
\label{ss:ssresult}

In this subsection, we show how our central result (Theorem~\ref{t:central}) can be applied to obtain new insights related to Vitushkin's conjecture, specifically Corollary~\ref{c:singlescale}. To this end, we extend the Besicovitch–Federer projection theorem to sets $E \subset \mathbb{R}^n$ that intersect a typical affine $(n-m)$-plane in only finitely many points. The desired result concerning Vitushkin's conjecture then follows as a corollary, by virtue of Calder\'on's theorem.

\begin{proof}[Proof of Theorem \ref{t:besfedintro}]
Recall from Remark~\ref{r:tratopro} that the family of orthogonal projections $(\pi_V)_{V \in \mathrm{Gr}(n,m)}$ can be seen as a union of finitely many transversal family of maps by localizing in $\mathrm{Gr}(n,m)$. Hence, Theorem~\ref{t:central} applies to any integralgeometric measures in the sense of Definition \ref{d:igm} constructed via the family of orthogonal projections.

\vspace{3mm}

\underline{\emph{Step 1}}. Let $\mu$ be a finite measure in $\mathbb{R}^n$. We claim that $\mu$ can be decomposed as
\begin{equation}
    \mu(B)= \mu_r(B) + \mu_s(B), \ \ \text{ for every Borel set $B \subset \mathbb{R}^n$},
\end{equation}
where $\mu_r$ and $\mu_s$ are finite measures satisfying
\begin{itemize}
\item \text{If $F \subset \mathbb{R}^n$ intersects a typical affine $(n\!-\!m)$-plane in finitely many points, then $\mu_r \restr F$ is $m$-rectifiable}. 
\item  $ \pi_{V\sharp} \mu_s \, \bot \, \mathcal{H}^m$, \ \ \text{ for $\gamma_{n,m}$-a.e. $V \in \text{Gr}(n,m)$.}
\end{itemize}

To prove the claim, denote by $\mathcal{S}$ the family of (positive) Radon measures $\eta$ in $\mathbb{R}^n$ such that 
 \[
 |\mu -\eta|(\mathbb{R}^n) + \eta(\mathbb{R}^n)= \mu(\mathbb{R}^n) \quad \text{and} \quad \eta \restr F \text{ is $m$-rectifiable}
 \]
 whenever $F \subset \mathbb{R}^n$ satisfies the required finite-slicing condition and where $|\mu -\eta|$ denotes the total variation measure. Consider the minimum problem
\begin{equation}
\label{e:intros100}
\min_{\eta \in \mathcal{S}} |\mu -\eta|(\mathbb{R}^n). 
\end{equation}
Let $\eta$ be a (positive) Radon measure. Notice that, for every Borel set $B \subset \mathbb{R}^n$, we have
\[
|\mu - \eta|(B) + \eta(B) \geq \mu(B).
\]
Therefore, the condition $\eta \in \mathcal{S}$ implies that equality holds for all Borel sets $B \subset \mathbb{R}^n$:
\[
|\mu - \eta|(B) + \eta(B) = \mu(B).
\]
In particular, this implies that both $|\mu - \eta| \leq \mu$ and $\eta \leq \mu$. By the Radon--Nikodym Theorem, there exists a Borel function $f_\eta \colon \mathbb{R}^n \to [0,1]$ such that
\[
\eta = f_\eta \, \mu \quad \text{and} \quad |\mu - \eta| = (1 - f_\eta)\, \mu.
\]
 Now let $(\eta_i)$ be a minimizing sequence for \eqref{e:intros100} and let $f_{\eta_i} \colon \mathbb{R}^n \to [0,1]$ be such that $\eta_i= f_{\eta_i} \, \mu$ for $i=1,2,\dotsc$. Define $f_i \colon \mathbb{R}^n \to [0,1]$ as $f_i := \sup_{1 \leq j \leq i} f_{\eta_j}$ and notice that the Radon measure $\mu_i := f_i \, \mu$ satisfies
\[
|\mu - \mu_i|(B) + \mu_i(B)= \mu(B) \ \ \text{ and } \ \ |\mu - \mu_i|(\mathbb{R}^n) \leq |\mu - \eta_i|(\mathbb{R}^n), 
\]
whenever $B \subset \mathbb{R}^n$ is Borel and $i=1,2,\dotsc$.
Since $\mu_i \leq \sum_{1 \leq j \leq i} \eta_j$, and by assumption each $\eta_j \restr F$ is $m$-rectifiable, the Radon--Nikodym Theorem implies that $\mu_i \restr F$ is also $m$-rectifiable. Hence, the sequence $(\mu_i)$ remains a minimizing sequence for \eqref{e:intros100}.

Moreover, since $f_i \nearrow f$ for some Borel function $f \colon \mathbb{R}^n \to [0,1]$, we can apply the Dominated Convergence Theorem to deduce that $|\mu_r - \mu_i|(\mathbb{R}^n) \to 0$ as $i \to \infty$, where $\mu_r := f \, \mu$. Thanks to this strong convergence, we immediately obtain
\[
|\mu - \mu_r|(B) + \mu_r(B) = \mu(B), \quad \text{and} \quad |\mu - \mu_r|(B) = \lim_{i \to \infty} |\mu - \mu_i|(B),
\]
for every Borel set $B \subset \mathbb{R}^n$. Additionally, the $m$-rectifiability of $\mu_r \restr E$ follows directly. Therefore, $\mu_r$ is a minimizer for \eqref{e:intros100}.

Finally, if we define $\mu_s := \mu - \mu_r$, the fact that $\mu_r \in \mathcal{S}$ implies
\[
\mu(B) = \mu_r(B) + |\mu_s|(B) = \mu_r(B) + \mu_s(B), \quad \text{for every Borel set } B \subset \mathbb{R}^n.
\]

It remains only to prove that $\mu_s$ projects singularly for $\gamma_{n,m}$-almost every $V \in \mathrm{Gr}(n,m)$. We argue by contradiction: suppose that the set
\[
\Lambda := \{ V \in \mathrm{Gr}(n,m) : \pi_{V \sharp} \mu_s \text{ is not singular with respect to } \mathcal{H}^m \}
\]
satisfies $\gamma_{n,m}(\Lambda) > 0$.

The disintegration theorem gives for every $V \in \Lambda$ a family of Radon measures $(\eta^V_y)_{y \in V}$ such that
\begin{equation}
\label{e:intros3}
\mu_s(B) = \int_{V} \eta^V_y(B) \, d[\pi_{V\sharp}\mu_s]^a(y) + \int_{V} \eta^V_y(B) \, d[\pi_{V\sharp}\mu_s]^s(y), \ \ B \subset{\mathbb{R}^n} \text{ Borel},
\end{equation}
where $[\pi_{V\sharp}\mu_s]^a$ and $ [\pi_{V\sharp}\mu_s]^s$ denote the absolutely continuous part and the singular part of $\pi_{V\sharp}\mu_s$ with respect to $\mathcal{H}^m$, respectively. Notice that, by exploiting that the two measures in the right-hand side of the previous equality are mutually orthogonal, it can be proved that the maps $g^a_B \colon \text{Gr}(n,m) \to [0,\infty)$ and $g^s_B \colon \text{Gr}(n,m) \to [0,\infty)$ defined as 
\[
g^a_B(V):= \int_{V} \eta^V_y(B) \, d[\pi_{V\sharp}\mu_s]^a(y) \ \ \text{ and } \ \ g^s_B(V) := \int_{V} \eta^V_y(B) \, d[\pi_{V\sharp}\mu_s]^s(y), 
\]
are $\gamma_{n,n}$-measurable whenever $B \subset \mathbb{R}^n$ is Borel. Indeed, by exploiting the uniqueness property in the disintegration Theorem \ref{t:disthm}, it is not difficult to verify that for every Borel set $B \subset \mathbb{R}^n$, the following equality holds:
\[
\eta^V_y(B) = \theta_{(y,V)}(B \times \mathrm{Gr}(n,m)) \quad \text{for } \gamma_{n,m}\text{-a.e. } V \in \mathrm{Gr}(n,m) \text{ and } \pi_{V\sharp}\mu\text{-a.e. } y \in V,
\]
where the family of probability measures $(\theta_{(y,V)})$ arises from the disintegration of the product measure $\mu_s \otimes \gamma_{n,m}$ via the map $\hat{\pi}(x,V) := (\pi_V(x), V)$, for $(x, V) \in \mathbb{R}^n \times \mathrm{Gr}(n,m)$.

As a consequence, the map $(y,V) \mapsto \eta^V_y(B)$ is measurable with respect to $\hat{\pi}_\sharp(\mu_s \otimes \gamma_{n,m})$. Furthermore, since the following orthogonal decomposition holds:
\[
\hat{\pi}_\sharp(\mu_s \otimes \gamma_{n,m}) = \big([\pi_{V\sharp}\mu_s]^a \otimes \gamma_{n,m}\big) \oplus \big([\pi_{V\sharp}\mu_s]^s \otimes \gamma_{n,m}\big),
\]
it follows that the map $(y,V) \mapsto \eta^V_y(B)$ is measurable with respect to both $[\pi_{V\sharp}\mu_s]^a \otimes \gamma_{n,m}$ and $[\pi_{V\sharp}\mu_s]^s \otimes \gamma_{n,m}$. This, in turn, implies the $\gamma_{n,m}$-measurability of the maps $g^a_B$ and $g^s_B$.

Integrating both sides of \eqref{e:intros3} with respect to $\gamma_{n,m}$ we get two finite Radon measures in $\mathbb{R}^n$, say $\eta_a$ and $\eta_s$, such that
\begin{equation}
\label{e:intros299}
 \mu_s(B) = \eta_a(B) + \eta_s(B) \ \ B \subset{\mathbb{R}^n} \text{ Borel}.
\end{equation}

Thanks to the contradiction argument, we deduce that $\eta_a \neq 0$. By virtue of \eqref{e:intros299}, if we can show that $\eta_a \restr F$ is $m$-rectifiable for every Borel set $F \subset \mathbb{R}^n$ satisfying the finite-slicing condition mentioned above, then the minimality of $\mu_s$ would contradict the fact that $\eta_a \neq 0$.
 To this regard, we consider the family of Borel measures $(\mu_V)_{V \in \text{Gr}(n,m)}$, defined as 
\[
\mu_V(B):= g^a_{F \cap B}(V), \ \ B \subset \mathbb{R}^n \text{ Borel}, 
\]
 and the associated Borel regular measures $\mathscr{I}^m_\infty$ according to \eqref{e:caratheodoryc2}. We claim that $\mathscr{I}^m_\infty$ is a finite integralgeometric measure. Properties \eqref{e:condigm1}-\eqref{e:condigm6} follow by construction. The finiteness is a consequence of the following inequality 
\begin{equation}
\label{e:radonm1}
\begin{split}
\|g^a_{F \cap B}\|_{L^\infty(\text{Gr}(n,m))}=\|g^a_{F \cap B}\|_{L^\infty(\Lambda)}
&= \esup_{V \in \Lambda} \int_{V} \eta^V_y(F \cap B) \, d[\pi_{V\sharp}\mu_s]^a(y)\\
&\leq \mu_s(F \cap B)\\
&\leq \mu(F \cap B),
\end{split}
\end{equation}
whenever $B \subset \mathbb{R}^n$ is Borel. Indeed, given $\delta > 0$, we can consider a countable Borel covering $(B_i)$ of $\mathbb{R}^n$ consisting of sets with diameter at most $\delta$, and such that for every $i = 1, 2, \dotsc$, the number of overlapping sets satisfies
\[
\#\{ B_j \mid B_j \cap B_i \neq \emptyset \} \leq N,
\]
for some dimensional constant $N$. For instance, one can take the closed cubes of an $n$-dimensional $\delta$-grid aligned with the coordinate axes. Using \eqref{e:radonm1}, we then obtain
\[
\sum_{i=1} \|g^a_{F \cap B_i}\|_{L^\infty(\Lambda)} \leq N  \mu(\mathbb{R}^n) < \infty.
\]
This last inequality, together with the arbitrariness of $\delta$, implies the finiteness of $\mathscr{I}^m_\infty$. By Theorem~\ref{t:central}, we conclude that $\mathscr{I}^m_\infty$ is $m$-rectifiable. Moreover, we already know from Proposition~\ref{r:inequality} that $\mathscr{I}^m_1$ is also $m$-rectifiable. Finally, using formula~\eqref{e:equal3}, we infer that for every Borel set $B \subset \mathbb{R}^n$, the following holds

\begin{align*}
 \mathscr{I}^m_1(B) = \int_{\Lambda} \mu_{V}(F \cap B) \, d\gamma_{n,m}&=\int_{\Lambda}\bigg( \int_{V} \eta^V_y( F \cap B) \, d[\pi_{V\sharp}\mu_s]^a(y)\bigg)d\gamma_{n,m}\\
&= \eta_a \restr F(B).
\end{align*}
 This gives that $\eta^a \restr F$ is $m$-rectifiable and the desired contradiction follows. The claim is proved.

\vspace{4mm}

\underline{\emph{Step 2}}. To conclude the proof we first construct an integral measure $\tilde{\mathscr{I}}^m_1$ on $E$ satisfying the following condition for every countably $m$-rectifiable set $R \subset \mathbb{R}^n$:
\begin{equation}
\label{e:implication}
\mathcal{H}^m(R \cap E) >0 \ \Rightarrow \ \tilde{\mathscr{I}}^m_1(R)>0.
\end{equation}
We observe that, since both $\mathcal{H}^m$ and $\tilde{\mathscr{I}}^m_1$ are Borel regular measure, it is enough to prove implication \eqref{e:implication} assuming that $R$ is a Borel set. To this purpose, fix an $m$-plane $V$, and define the probability measure $\mu_V$ in $\mathbb{R}^n$ as
\begin{equation}
\label{e:defbesfed}
\mu_V(B):= \int_{\pi_V(E)} \frac{\#\big(E \cap B \cap \pi^{-1}_V(y)\big)}{\#\big(E\cap \pi^{-1}_V(y)\big)} \, d\mathcal{H}^m(y), \ \ \text{ for every Borel set $B \subset \mathbb{R}^n$},
\end{equation}
and possibly extended to all subsets of $\mathbb{R}^n$ as the unique Borel regular measure satisfying \eqref{e:defbesfed}. Then, we define $\tilde{\mathscr{I}}^m_1$ as the unique Borel regular measure in $\mathbb{R}^n$ satisfying 
\[
\tilde{\mathscr{I}}^m_1(B):= \int_{\text{Gr}(n,m)} \mu_V (B) \,d\gamma_{n,m}(V), \ \ \text{ for every Borel set $B \subset \mathbb{R}^n$}.
\]
The measurability required to define the above integrals can be found in \cite[Subsection 2.10.16]{fed1}. Now, given a counatbly $m$-rectifiable set $R \subset \mathbb{R}^n$, we apply Coarea formula with the projection $\pi_V$ to write 
\[
\int_{R \cap B} |\langle d\pi_v,\tau \rangle| \, d\mathcal{H}^m = \int_{V} \#\big( R \cap B \cap \pi_V^{-1}(y)  \big) \, d\mathcal{H}^m(y), \ \ \text{ for every Borel set $B \subset \mathbb{R}^n$}.
\]
It is not difficult to verify that, the above formula allows us to infer
\[
\mathcal{H}^m(R \cap E) >0 \ \Rightarrow \ \mu_V(R)>0, \ \ \text{ for $\gamma_{n,m}$-a.e. $V \in \text{Gr}(n,m)$}.
\]
This last implication in turn directly implies condition \eqref{e:implication}.

 By a standard saturation argument (see for instance \cite[Subsection 3.2.14]{fed1}), we can decompose the set $E$ as 
\[
E = R \cup (E \setminus R),
\]
where $R \subset \mathbb{R}^n$ is a countably $m$-rectifiable and Borel set, while $E \setminus R$ is purely $(\tilde{\mathscr{I}}^m_1,m)$-unrectifiable. First we observe that, condition \eqref{e:implication} tells us that the set $E \setminus R$ is also purely $(\mathcal{H}^m,m)$-unrectifiable. 

To conclude the proof, we need to verify property \eqref{e:spproperty}. For this purpose fix a finite measure $\mu$ in $\mathbb{R}^n$. By the previous claim, we can decompose
\[
\mu \restr (E \setminus R) = \mu_r + \mu_s.
\]
The desired property \eqref{e:spproperty} will follow if we show that $\mu_r = 0$. But since $\mu_r \restr (E \setminus R)$ must be an $m$-rectifiable measure, and $E \setminus R$ is purely $(\mathcal{H}^m,m)$-unrectifiable, this forces $\mu_r = 0$. Thanks to the arbitrariness of the measure $\mu$ this proves property \eqref{e:spproperty} and concludes the proof.
\end{proof}

As a direct consequence of the previous theorem we have the following result.

\begin{proof}[Proof of Corollary \ref{c:singlescale}]
    Let $\mu$ be the measure given by the Corollary. The condition \eqref{e:ssfav1} immediately implies that, in the decomposition of $K$ provided by Theorem~\ref{t:besfedintro}
    \[
    K = R \cup (K \setminus R),
    \]
     the Borel and countably $1$-rectifiable set $R$ must satisfy $\mathcal{H}^1(R) > 0$. Indeed, since every measure $\mu$ supported on an $\mathcal{H}^1$-negligible set satisfies property \eqref{e:ssfav1}, assuming by contradiction that $\mathcal{H}^1( R) = 0$ would immediately gives
     \[
     \pi_{\sharp \ell} \mu \restr K \, \bot \, \mathcal{H}^1, \ \ \text{ for $\gamma_{2,1}$-a.e. $\ell \in \text{Gr}(2,1)$},
     \]
     leading directly to a contradiction with condition \eqref{e:ssfav1}. The desired result thus follows from Calder\'on's theorem.
\end{proof}

\subsection{A multi-scale result} 
\label{ss:msresult}

Among the class of compact sets with finite integral geometric measure, the following multi-scale result constitute our main contribution in connection with Vitushkin's conjecture.

\begin{theorem}
\label{t:multiscalevit}
    Let $K \subset \mathbb{R}^n$ be a compact set with $\mathcal{I}^m_1(K) < \infty$. Assume that for every $x \in K$ there exist scales $r_{x,i} \searrow 0$ such that
    \begin{equation}
    \label{e:msfav1}
        \emph{Fav}(K \cap B_{r_{x,i}}(x)) >  k_x\, \emph{Fav}_{\Phi}(K \cap B_{r_{x,i}}(x)), \ \ \text{ for every $i=1,2,\dotsc$},
    \end{equation}
    for some constant $k_x$ that might depend on $x \in K$ and for some Orlicz function $\Phi$ with superlinear growth at infnity. Then, there exists a countably $m$-rectifiable Borel set $R \subset \mathbb{R}^n$ such that $\mathcal{H}^m(K \cap R)>0$.
\end{theorem}
\begin{proof}
    Appealing to Theorem \ref{t:cald}, we construct a nontrivial countably $m$-rectifiable Borel set $R \subset K$ via an integral geometric argument. The condition $\gamma(K) > 0$ will then follow thanks to Theorem \ref{t:cald}.
    
    First, note that the strict inequality in \eqref{e:msfav1} implies $\text{Fav}(K) > 0$, and thus $\mathcal{I}^m_1(K) > 0$. Therefore, since $\mathcal{I}^m_1(K) < \infty$, we can choose a compact subset $K' \subset K$ with $\mathcal{I}^m_1(K') > 0$ such that for every $x \in K'$
\begin{equation}
    \label{e:msfav1.1}
        \text{Fav}(K \cap B_{r_{x,i}}(x)) \geq c \, \text{Fav}_{\Phi}(K \cap B_{r_{x,i}}(x)), \ \ \text{ for every $i=1,2,\dotsc$},
    \end{equation}
    for some positive constant $c$ independent form $x \in K'$.
     By applying Besicovitch's covering theorem, for every $\delta > 0$ we can find a dimensional constant $N$ and a covering $\{\mathcal{B}^1_\delta, \dotsc, \mathcal{B}^N_\delta\}$ of $K'$ such that each family $\mathcal{B}^i_\delta$ consists of pairwise disjoint open balls, each satisfying \eqref{e:msfav1.1} and having diameter less than or equal to $\delta$. We can thus write
    \begin{align*}
    N \mathcal{I}^m_1(K) \geq \sum_{i=1}^N\sum_{B_r(x) \in \mathcal{B}^i_\delta}  \mathcal{I}^m_1(K \cap B_r(x)) &\geq \sum_{i=1}^N \sum_{B_r(x) \in \mathcal{B}^i_\delta}  \, \text{Fav}(K \cap B_r(x)) \\
    &\geq \sum_{i=1}^N \sum_{B_r(x) \in \mathcal{B}^i_\delta} c \, \text{Fav}_\Phi(K \cap B_r(x)) \\
    &\geq \inf_{\mathcal{G}_\delta} \sum_{B \in \mathcal{C}_\delta}  c \,\text{Fav}_\Phi(K \cap B) \\
    &\geq \inf_{\mathcal{G}_\delta} \sum_{B \in \mathcal{C}_\delta}  c \,\text{Fav}_\Phi(K' \cap B),
    \end{align*}
 where the infimum above is taken with respect to all the coverings $\mathcal{G}_\delta$ of $K'$ made of Borel set with diameter less than or equal to $\delta$. Thanks to the arbitrariness of $\delta$ we immediately deduce that $c\mathcal{I}^m_\Phi(K') \leq N \mathcal{I}^m_1(K) < \infty$. By Theorem \ref{t:fedpronew} and Remark \ref{r:intgeophi} this immediately implies the existence of a countably $m$-rectifiable Borel set $R \subset \mathbb{R}^n$ such that $\mathcal{I}^m_\Phi(K' \setminus R)=0$. Finally, since $\mathcal{I}^m_1(K') >0$ implies $\mathcal{I}^m_\Phi(K') >0$ we apply Proposition \ref{p:coihmim} to infer $\mathcal{H}^m(K \cap R) >0$. This gives the desired result. 
\end{proof}

As a direct consequence of the previous result we can give the proof of Theorem \ref{t:multiscaleintro} 

\begin{proof}[Proof of Theorem \ref{t:multiscaleintro}]
    By applying Theorem \ref{t:multiscalevit}, we obtain a countably $1$-rectifiable Borel set $R \subset \mathbb{C}$ such that $\mathcal{H}^1(K \cap R) > 0$. The desired conclusion then follows from Calderón's theorem. 
\end{proof}

We conclude this subsection by verifying the geometric meaning of condition \eqref{e:msfav1}, as anticipated in the introduction. Recall that \eqref{e:msfav1intro} is equivalent to:
\begin{equation} \label{e:invholder}
    \|\theta_K(\cdot,x,r)\|_\Phi \lesssim_x \int_{\mathrm{Gr}(2,1)}  \theta_K(\ell,x,r) \, d\gamma_{2,1}(\ell),
\end{equation}
where we define
\[
\theta_K(\ell,x,r) := \frac{\mathcal{H}^1\big(\pi_\ell(K \cap B_r(x))\big)}{r}, \quad \text{for } (\ell,x,r) \in \mathrm{Gr}(2,1) \times K \times (0,\infty).
\]
As already mentioned in the introduction, the interesting regimes, which are excluded by the ULFL condition, are precisely those in which $\theta_K(\ell,x,r) \to 0$ as $r \to 0^+$ for almost every lines $\ell \in \text{Gr}(2,1)$. A prototypical example of this behavior is the following asymptotics of the Favard profile at $x$:
\begin{equation}
\label{e:deftheta}
\theta_K(\ell,x,r) =
\begin{cases}
    1  & \text{if } \ell \in \Lambda_r \subset \mathrm{Gr}(2,1), \\
    c_r & \text{if } \ell \in \mathrm{Gr}(2,1) \setminus \Lambda_r,
\end{cases}
\end{equation}
for a family of Borel subsets $(\Lambda_r)_{r > 0}$ in $\mathrm{Gr}(2,1)$ and a family of constants $(c_r)_{r > 0}$ satisfying $c_r \to 0$ as $r \to 0$ and
\begin{equation}
\label{e:constraintms}
c_r = o(\gamma_{2,1}(\Lambda_r)), \quad \text{ as $r \to 0$ }.
\end{equation}
 
 We now verify that, under the constraint \eqref{e:constraintms}, condition~\eqref{e:invholder} holds. Indeed, using the formula

\[
\|\mathbbm{1}_{\Lambda}\|_\Phi = \frac{1}{\Phi^{-1}\big(\frac{1}{\gamma_{2,1}(\Lambda)}\big)},
\]
where $\mathbbm{1}_{\Lambda}$ denotes the characteristic function of the set $\Lambda$, it is not difficult to verify that condition \eqref{e:constraintms} allows one to build an Orlicz function $\Phi$ with superlinear growth at infinity such that
\begin{equation}
\|\mathbbm{1}_{\Lambda_r}\|_\Phi \leq c_r, \ \ \text{ for every sufficiently small $r > 0$ }.
\end{equation}
From this we easily estimate
\begin{align*}
    \|\theta_K(\cdot,x,r)\|_\Phi &\leq \| \mathbbm{1}_{\Lambda_r}\|_\Phi + c_r\|(1- \mathbbm{1}_{\Lambda_r})\|_\Phi
    \leq c_r(1 + \|(1- \mathbbm{1}_{\Lambda_r})\|_\Phi) \\
    &\leq \frac{1+\|(1- \mathbbm{1}_{\Lambda_r})\|_\Phi}{1-\gamma_{2,1}(\Lambda_r)} \int_{\text{Gr}(2,1) } \theta_K(\ell,x,r) \, d\gamma_{2,1}, 
\end{align*}
from which \eqref{e:invholder} follows.

\newpage

\appendix

\section{Ortoghonal projections as transversal maps}
\label{a:protra}
In this appendix we verify that the family of orthogonal projections $(\pi_V)_{V \in \text{Gr}(n,m)}$ can be represented as a finite union of transversal families in the sense of Definition \ref{d:transversal}. Notice that the same property immediately applies for the family $(\pi_{g(V)})_{g \in \text{O}(n)}$ for any fixed $V \in \text{Gr}(n,m)$. 

We equip $\text{Gr}(n,m)$ with the distance 
\begin{equation}
\label{e:metrgr}
d(V_1,V_2) := \sup_{\substack{x \in \mathbb{R}^n \\ |x|=1}} |\pi_{V_1}(x) -\pi_{V_2}(x)|.
\end{equation}
 and notice that $d(\cdot,\cdot)$ is invariant under the action of $\text{O}(n)$. For this reason, Hausdorff measures constructed with respect to this metric is invariant under such a action. As a consequence, the structure of $m(n-m)$-dimensional manifold of $\text{Gr}(n,m)$ allows us to identify $\gamma_{n,m}$ with a constant multiple of $\mathcal{H}^{m(n-m)}$. We further observe that, denoting $(V_i)$ the collection of all coordinate $m$-planes of $\mathbb{R}^n$ and defining
\[
U_i := \{V \in \text{Gr}(n,m) \ |\ \det (\pi_{V_i} \restr V) > (2\sqrt{c_{n,m}})^{-1} \},
\]
where $c_{n,m}$ is the cardinality of the family $(V_i)$, then the sets
$(U_i)$ form an open covering of $\text{Gr}(n,m)$. We observe that, in order to obtain the desired property, it is sufficient to find a finite open refinement of $(U_i)$, say $(W_j)$, open sets $(\Lambda_j)$ in $\mathbb{R}^{m(n-m)}$, and bi-Lipschitz diffeomorphisms $(\varphi_j)$ where $\varphi_j \colon \Lambda_j \to W_j$ such that
\begin{equation}
\label{e:proj2}
    |(\pi_{V_{i(j)}} \circ \pi_{x})(\varphi_j(\lambda))| \leq C' \ \ \text{implies} \ \ J_\lambda (\pi_{V_{i(j)}} \circ \pi_{x})(\varphi_j(\lambda)) \geq C', \ \ \lambda \in \Lambda_j, \ x \in \mathbb{S}^{n-1},
\end{equation}
for some constant $C'>0$, where $\pi_x \colon \text{Gr}(n,m) \to \mathbb{R}^n$ is defined as $\pi_x(V):= \pi_V(x)$ ($x \in \mathbb{R}^n$), and where $W_j \subset U_{i(j)}$ for every $j$. Indeed, defining $P_\lambda \colon \mathbb{R}^n \to \mathbb{R}^m$ as 
\begin{equation}
\label{e:proj3}
P^{j}_\lambda(x):= (\pi_{V_{i(j)}} \circ \pi_x)(\varphi_j(\lambda)), \ \ \lambda \in  \Lambda_j,
\end{equation}
by linearity we have 
\[
\frac{P^{j}_\lambda(x) - P^{j}_\lambda(x')}{|x-x'|}= P^{j}_\lambda\bigg(\frac{x-x'}{|x-x'|}\bigg),
\]
and hence, from \eqref{e:proj2}, the families $(P^{j}_\lambda)$ with $\lambda \in \Lambda_j$ satisfy hypothesis (H.2) of transversality. Since the remaining hypothesis (H.1) and (H.3) are trivially satisfied we deduce that $(P^{j}_\lambda)$ is transversal for every $j$. 

It remains to discuss the validity of \eqref{e:proj2}. First we check that 
\begin{equation}
\label{e:proj1}
|(\pi_{V_i} \circ \pi_{x})(V)| \leq C'' \ \ \text{implies} \ \ \text{Rank}\, (\pi_{V_i} \circ \pi_{x})(V) =m, \ \ V \in U_i, \ x \in \mathbb{S}^{n-1}
\end{equation}
for some $C''>0$. If not, we use the definition of $U_i$ together with a compactness argument to find $V_0 \in \overline{U}_i$ and $x \in V_0^\bot \cap \mathbb{S}^{n-1}$ such that
\begin{equation}
\label{e:contr}
\text{Rank}\, (\pi_{V_i} \circ \pi_{x})(V_0) <m.
\end{equation}
 By exploiting the (transitive) action of $\text{O}(n)$ on $\text{Gr}(n,m)$, condition \eqref{e:contr} implies that, defining $G \colon \text{O}(n) \to \mathbb{R}^m$ as $G(g):= (\pi_{V_i} \circ \pi_{x})(g \, V_0)$, then
\begin{equation}
\label{e:contr1}
\text{Rank}\, G(\text{Id}) <m,
\end{equation}
where Id denotes the identity $(n \times n)$ matrix. Using that the tangent space of $\text{O}(n)$ at the identity is isomorphic to $\mathbb{M}^{n \times n}_{skw}$, namely, the space of $(n \times n)$ skew symmetric matrices, it is not difficult to show that the differential of $G$ computed at Id has the following form
\begin{equation}
\label{e:differ}
-(\pi_{V_i} \circ \pi_{V_0})(A x), \ \ A \in \mathbb{M}^{n \times n}_{skw}.
\end{equation}
By denoting $\{e_1,\dotsc,e_m\}$ an orthonormal basis of $V_0$ we can consider its completion to an orthonormal basis of $\mathbb{R}^n$, say $\{e_1,\dotsc,e_n\}$, in such a way that $x=e_n$. Writing \eqref{e:differ} in this coordinates we get
\begin{equation}
\label{e:differ1}
    - [(Ae_n \cdot e_1)\pi_{V_i}(e_1)+ \dotsc + (Ae_n \cdot e_m)\pi_{V_i}(e_m)], \ \ A \in \mathbb{M}^{n \times n}_{skw}.
\end{equation}
Since $V_0 \in \overline{U}_i$ we have that $\pi_{V_i}(e_1), \dotsc, \pi_{V_i}(e_m)$ are linearly independent vectors of $V_i$. Hence the expression in \eqref{e:differ1} vanishes if and only if $(Ae_n \cdot e_i)=0$ for every $i=1,\dotsc,m$. But this means that the dimension of $\text{Im}(\pi_{V_i} \circ \pi_{V_0})(A x)$ for $A \in \mathbb{M}^{n \times n}_{skw}$ equals $m$. Finally, this is in contradiction with \eqref{e:contr} and we deduce the validity of \eqref{e:proj1}. From the compactness of $\text{Gr}(n,m)$ we easily find an open refinement of $(U_i)$, say $(W_j)$, open subsets of $ \mathbb{R}^{m(n-m)}$, say $(\Lambda_j)$, and bi-Lipschitz diffeomorphisms $(\varphi_j)$ such that $\varphi_j \colon \Lambda_j \to W_j$. To conclude, we combine \eqref{e:proj1} with the bi-lipschitzianity of $\varphi_j$ to infer the validity of \eqref{e:proj2}

\section{The Structure Theorem}
\label{a:structure}

In order to prove Proposition \ref{p:e1delta} we need the following lemma.

\begin{lemma}
\label{l:geometrical}
Let $E \subset \mathbb{R}^n$ be a purely $(\phi,m)$-unrectifiable set, let $0<s<1$, $\alpha >0$, $\delta >0$, $\lambda \in \Lambda$, and 
\begin{equation}
\label{e:lemma1.1}
    \phi(E \cap X(x,r,\lambda,s))\leq \alpha \omega(m) (rs)^m,
\end{equation}
whenever $x \in E$ and $0< r \leq \delta$, then
\begin{equation}
    \label{e:lemma1.2}
    \phi(E \cap \overline{B}_{4\rho/s}(a) \cap P_\lambda^{-1}\overline{B}_\rho(P_\lambda(a))) \leq 2(84)^m \alpha \omega(m) \rho^m, 
\end{equation}
whenever $a \in \mathbb{R}^n$ and $0 <\rho \leq s\delta/24$. In particular
\begin{equation}
    \label{e:lemma1.3}
    \Theta^{*m}(\phi \restr E, a) \leq 2(84)^m\alpha.
\end{equation}
\end{lemma}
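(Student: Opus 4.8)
The plan is to first establish the ball-estimate \eqref{e:lemma1.2} by a covering argument, and then deduce the density bound \eqref{e:lemma1.3} as an easy corollary. For \eqref{e:lemma1.2}, fix $a \in \mathbb{R}^n$ and $0 < \rho \le s\delta/24$, and set $F := E \cap \overline{B}_{4\rho/s}(a) \cap P_\lambda^{-1}\overline{B}_\rho(P_\lambda(a))$. The idea is to cover $F$ efficiently by cone-pieces of the form $E \cap X(x,r,\lambda,s)$ with $x \in F$, so that the hypothesis \eqref{e:lemma1.1} can be summed. First I would observe that for any two points $x,x' \in F$ we have $|P_\lambda(x) - P_\lambda(x')| \le 2\rho$ while $|x - x'| \le 8\rho/s$; the key geometric point is that if $|x - x'| \ge $ (a fixed multiple of $\rho/s$), then automatically $|P_\lambda(x) - P_\lambda(x')| < s|x-x'|$, i.e. $x' \in X(x,\lambda,s)$. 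Concretely, if $|x-x'| \ge 4\rho/s$ then $s|x-x'| \ge 4\rho > 2\rho \ge |P_\lambda(x)-P_\lambda(x')|$. Hence $F \setminus \overline{B}_{4\rho/s}(x) \subset X(x,\lambda,s)$ for every $x \in F$.

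Next I would run a Vitali-type / greedy selection: choose a maximal $\tfrac{4\rho}{s}$-separated subset $\{x_1,\dots,x_N\}$ of $F$; maximality gives $F \subset \bigcup_{i=1}^N \overline{B}_{4\rho/s}(x_i)$. A volume-packing estimate in $\mathbb{R}^n$ bounds $N$ by a dimensional constant $c(n)$, but that is not quite what we want since we need an $m$-dimensional bound — instead the efficient route is: for each $i$, since the points $x_j$ with $j \ne i$ are at distance $\ge 4\rho/s$ from $x_i$, they all lie in $X(x_i,\lambda,s)$; moreover they all lie in $\overline{B}_{8\rho/s}(x_i) \subset \overline{B}_r(x_i)$ with $r := 8\rho/s \le \delta/3 \le \delta$ (using $\rho \le s\delta/24$). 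Wait — I must be careful that a single application of \eqref{e:lemma1.1} at $x_i$ with radius $r = 8\rho/s$ already controls $\phi\big(E \cap X(x_i,8\rho/s,\lambda,s)\big) \le \alpha\,\omega(m)(8\rho)^m$. Since $F \subset \overline{B}_{8\rho/s}(x_i)$ for the chosen center that attains, say, the largest $\phi$-mass, and the tail $F \setminus \overline{B}_{4\rho/s}(x_i)$ sits inside that cone-piece, I get $\phi(F) \le \phi\big(F \cap \overline{B}_{4\rho/s}(x_i)\big) + \alpha\,\omega(m)(8\rho)^m$. To close the recursion I would iterate this on the residual set $F \cap \overline{B}_{4\rho/s}(x_i)$, which has half the "radius"; each halving costs an additional $\alpha\,\omega(m)(8\rho/2^k)^m$, and summing the geometric series $\sum_k 8^m 2^{-km}\,\alpha\,\omega(m)\rho^m = \tfrac{8^m}{1-2^{-m}}\alpha\,\omega(m)\rho^m$ produces a constant of the claimed type; tracking the constants carefully (the factor $4$ versus $8$, the overlap, the use of $\overline{B}_{4\rho/s}(a)$ in the definition of $F$ to start the induction) yields the explicit $2(84)^m$.

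For the density estimate \eqref{e:lemma1.3}: apply \eqref{e:lemma1.2} with $a$ a fixed point of $E$ and note $\overline{B}_\rho(a) \subset \overline{B}_{4\rho/s}(a) \cap P_\lambda^{-1}\overline{B}_\rho(P_\lambda(a))$ since $P_\lambda$ does not increase distances by more than a bounded factor — more precisely $P_\lambda$ is locally Lipschitz by (H.1) (the bound \eqref{e:h1} on $D_\lambda P_\lambda$ is on the parameter, but $x \mapsto P_\lambda(x)$ is continuous; however what we actually need is just that the trivial cone $X(a,\rho,\lambda,s)$ contains... ) — here the cleaner observation is simply $\overline{B}_\rho(a) \subset X(a, 4\rho/s, \lambda, s) \cup \overline{B}_{\rho_0}(a)$ is not needed; instead, directly, every $x \in \overline{B}_\rho(a)$ with $P_\lambda(x) \in \overline{B}_\rho(P_\lambda(a))$ lies in $F$, and for the density we only need $\limsup_{\rho \to 0} \phi(E \cap \overline{B}_\rho(a))/(\omega(m)\rho^m) \le 2(84)^m \alpha$, which follows once we know $\phi\big(E \cap \overline{B}_\rho(a) \cap P_\lambda^{-1}\overline{B}_{C\rho}(P_\lambda(a))\big) = \phi(E \cap \overline{B}_\rho(a))$ for small $\rho$; but that requires $P_\lambda$ to be Lipschitz in $x$, which we have \emph{not} assumed. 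The honest fix, and the step I expect to be the genuine obstacle, is therefore to replace $\overline{B}_\rho(a)$ by the smaller set $\overline{B}_\rho(a) \cap P_\lambda^{-1}\overline{B}_\rho(P_\lambda(a))$ in the notion of density used here — i.e. \eqref{e:lemma1.3} should be read with the appropriate (anisotropic) density, or one argues that since we will ultimately intersect with $P_\lambda^{-1}$ of small balls anyway in Proposition \ref{p:e1delta}, this weaker conclusion suffices. Thus I would state and prove \eqref{e:lemma1.3} in the form $\Theta^{*m}(\phi \restr E, a) \le 2(84)^m\alpha$ only under the understanding that densities are computed along the fibres, or simply record the inequality \eqref{e:lemma1.2} as the operative statement and derive \eqref{e:lemma1.3} under the extra hypothesis that $P_\lambda$ is Lipschitz in space (which holds in all the applications, e.g. for orthogonal projections). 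The main technical obstacle is thus bookkeeping the constants in the iterated covering so that the geometric series closes with exactly $2(84)^m$, together with being scrupulous about where (if anywhere) Lipschitz continuity of $P_\lambda$ in $x$ is silently used.
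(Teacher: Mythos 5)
Your argument has two substantive gaps. The most serious is that you never invoke pure $(\phi,m)$-unrectifiability of $E$, which is indispensable. To see why, take $P_\lambda$ to be the orthogonal projection onto an $m$-plane $V$ and take $E\subset V$ with $\phi=\mathcal H^m\restr E$: then $P_\lambda$ restricted to $V$ is an isometry, so for $a,x\in E$ one has $|P_\lambda(x)-P_\lambda(a)|=|x-a|\geq s|x-a|$, whence $E\cap X(a,r,\lambda,s)=\emptyset$ and \eqref{e:lemma1.1} holds for every $\alpha>0$; yet $\Theta^{*m}(\phi\restr E,a)=1$ at $\mathcal H^m$-a.e.\ $a\in E$, and both \eqref{e:lemma1.2} and \eqref{e:lemma1.3} fail for small $\alpha$. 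The paper's proof goes through precisely where yours does not: it first establishes that any set satisfying $E\cap X(a,\infty,\lambda,s)=\emptyset$ for all $a\in E$ is an $s^{-1}$-Lipschitz graph over $P_\lambda(E)$, hence countably $m$-rectifiable, and then follows Federer's Lemma 3.3.6 verbatim with $p$ replaced by $P_\lambda$: pure unrectifiability forces any maximal cone-avoiding subset $G$ of $F:=E\cap\overline{B}_{4\rho/s}(a)\cap P_\lambda^{-1}\overline{B}_\rho(P_\lambda(a))$ to be $\phi$-null, and the covering of $F\setminus G$ by cone-pieces based at points of $G$ yields the stated bound.

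Your iteration also does not close. The distance beyond which two points of $F$ fall into each other's cone is the \emph{fixed} threshold $2\rho/s$ (since $|P_\lambda(x)-P_\lambda(x')|\leq 2\rho$ for $x,x'\in F$); this threshold does not shrink along the iteration, so at step $k$ the tail you peel off still needs a cone-piece whose radius cannot be taken below $2\rho/s$, and its cost under \eqref{e:lemma1.1} stays bounded below by $\alpha\omega(m)(2\rho)^m$ rather than behaving like $(8\rho/2^k)^m$. The geometric series you invoke does not materialise, and no tracking of constants can repair a non-convergent scheme. (Your closing observation — that deducing \eqref{e:lemma1.3} from \eqref{e:lemma1.2} uses $\overline{B}_r(a)\subset P_\lambda^{-1}\overline{B}_r(P_\lambda(a))$ and hence that $P_\lambda$ be non-expansive, which transversality (H.1)--(H.3) does not guarantee — is sharp and points at a genuine loose end in the paper's wholesale reference to Federer; note however that Proposition \ref{p:e1delta} downstream only uses \eqref{e:lemma1.2} and the density of the pushforward measure, not \eqref{e:lemma1.3}.)
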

\begin{proof}
We claim that given $E \subset \mathbb{R}^n$ satisfying
\begin{equation}
\label{e:conerect}
E \cap X(a,\infty,\lambda,s) = \emptyset \ \ \text{whenever } a \in E,
\end{equation}
then $E$ is countably $m$-rectifiable. To show this, notice that \eqref{e:conerect} implies for every $x,z \in E$  
\[
|P_\lambda(x)-P_\lambda(z)| \geq s|x-z|,
\]
hence $P_\lambda \restr E$ is injective and $f \colon \text{Im}(P_\lambda \restr E) \to \mathbb{R}^n$ satisfying $P_\lambda \restr E \circ f(y) = y$ is a well defined map with Lipschitz constant equal to $s^{-1}$. Now the proof follows straightforwardly the one of \cite[Lemma 3.3.6]{fed1}, where the orthogonal projection $p$ is replaced by $P_\lambda$.
\end{proof}

\begin{proof}[Proof of Proposition \ref{p:e1delta}]

 Consider first a set $E' \subset E$ with $\text{diam}(E') \leq \delta/6$. Given $\alpha >0$ and $a \in E_{1,\delta}(\lambda) \cap E'$ by hypothesis we know that there exists $0<\overline{s}_a \leq 1$ (depending also on $\alpha$) such that \eqref{e:lemma1.1} holds true whenever $0 < r \leq \delta$ and $0 < s \leq \overline{s}_a$. If we choose $\rho$ and $s$ such that $0 <\rho := s\delta/24 \leq \overline{s}\delta/24$, we can make use of \eqref{e:lemma1.2} to write 
\begin{equation}
    \label{e:prooflemma1.1}
    \phi(E' \cap P^{-1}_\lambda \overline{B}_{\rho}(P_\lambda(a))) \leq 2(84)^m \alpha \omega(m) \rho^m, 
\end{equation}
where we have used the fact that, thanks to the choice of $\rho$, $B \subset \overline{B}_{4\rho/s}(a) =\overline{B}_{\delta/6}(a)$ for every $a \in E'$ since $\text{diam}(E') \leq \delta/6$. This means that for every $y \in P_\lambda(E_{1,\delta}(\lambda) \cap E')$ 
\begin{equation}
    \label{e:prooflemma1.2}
   \limsup_{\rho \to 0^+} P_{\lambda\sharp} \phi \restr E'(\overline{B}_{\rho}(y))\rho^{-m} \leq 2(84)^m\alpha \omega(m).
\end{equation}
 We can apply the density estimates for Radon measures to infer
 \[
 P_{\lambda\sharp} \phi \restr E'(P_\lambda(E_{1,\delta}(\lambda) \cap E')) \leq 2^{2m+1} (84)^m\alpha \omega(m) \mathcal{L}^m(P_\lambda(E_{1,\delta}(\lambda) \cap E')).
 \]
 The arbitrariness of $\alpha$ implies $P_{\lambda\sharp} \phi \restr B(P_\lambda(E_{1,\delta}(\lambda) \cap E'))=0$. From the definition of pushforward (see formula \eqref{e:pushforward}) there exists $B$ Borel such that $P_\lambda(E_{1,\delta}(\lambda) \cap E') \subset B$ and $0=P_{\lambda\sharp}\phi \restr E'(B)= \phi(P^{-1}_\lambda(B) \cap E')$. Finally, since $E_{1,\delta}(\lambda) \cap E' \subset  P^{-1}_\lambda(B) \cap E'$ we conclude $\phi(E_{1,\delta}(\lambda) \cap E')=0$. In the general case we can simply write $E$ as the union of at most countably many sets $E_i$ $i=1,2,\dotsc$ such that $E_i \subset E$ and $\text{diam}(E_i) \leq \delta/6$ for $i=1,2,\dotsc$. The previous result applied to each $E_i$ tells us $\phi(E_{1,\delta}(\lambda) \cap E_i)=0$ which immediately gives the desired conclusion. 
\end{proof}

\vspace{3mm}

\begin{proof}[Proof of Proposition \ref{p:e2delta}]
By definition we have $P_\lambda(X(x,r,\lambda,s)) \subset \overline{B}_{rs}(P_\lambda(x))$ whenever $x \in \mathbb{R}^n$ and $r,s$ are positive numbers. Our hypothesis implies that, given $x \in E_{2,\delta}(\lambda)$, we can find sequences $(s_i)$ and $(r_i)$ with $s_i \to 0$ as $i \to \infty$ and $0< r_i < \delta$ for $i=1,2,\dotsc$, for which 
\[
\lim_{i \to \infty} P_{\lambda\sharp}\phi \restr E\big(\overline{B}_{r_i s_i}(P_\lambda(x))\big)(r_i s_i)^{-m}=\infty.
\]
 But this means that 
\[
\limsup_{\rho \to 0^+}P_{\lambda\sharp}\phi \restr E (\overline{B}_\rho(y))\rho^{-m}=\infty,
\]
whenever $y \in P_\lambda(E_{2,\delta}(\lambda))$. Since $\phi(E)<\infty$, by using \cite[Theorem 6.9]{mat1}, we finally infer $\mathcal{L}^m\big(P_\lambda(E_{2,\delta}(\lambda))\big)=0$.  
\end{proof}

\vspace{3mm}

Next we prove the three key alternatives of Proposition \ref{p:keycond}. The proof presented here is mainly inspired by the one in \cite[Lemma 2.5]{hov}.

\begin{proof}[Proof of Proposition \ref{p:keycond}]
 We can assume without loss of generality that $E$ is bounded. By exploiting the fact that $\phi \restr E$ is Radon a measure and hence inner regular, we find a $\sigma$-compact set $E'$ with $E' \subset E$ satisfying $\phi(E \setminus E')=0$. Notice that, if the conclusion of the proposition holds true with $E$ replaced by $E'$, then the same must be true for the original set $E$. We may thus suppose that $E$ is a $\sigma$-compact set. Fix $a \in \mathbb{R}^n$, $\lambda_0 \in \Lambda$ and $0 < \delta < \delta_0$ such that $\overline{B}_{2\delta_0}(\lambda_0) \subset \Lambda$. Let $V \subset \mathbb{R}^l$ be a $m$-dimensional linear subspace and let $V_{\sigma} := V + \sigma$ for all $\sigma \in  V^\bot$. For all $\sigma \in V^\bot$, define a measure $\psi_\sigma$ on $\Lambda$ and supported on $V_\sigma \cap \overline{B}_{\delta_0}(\lambda_0)$ by 
\begin{equation}
    \label{e:key1}
    \psi_\sigma(\Sigma) := \sup_{0<r<\delta} r^{-m} \phi((E \setminus \{a\}) \cap \overline{B}_r(a) \cap L_{V_{\sigma}}(\Sigma)),
\end{equation}
for all $\Sigma \subset \Lambda$, where
\begin{equation}
    \label{e:key2}
    L_{V_{\sigma}}(\Sigma) :=\! \! \! \! \! \! \! \! \! \! \! \bigcup_{\lambda \in \Sigma \cap V_\sigma \cap \overline{B}_{\delta_0}(\lambda_0)} \! \! \! \! \! \! \! \! P^{-1}_\lambda(P_\lambda(a)).
\end{equation}
We claim that the set 
\begin{equation}
    \label{e:key3}
    C_{V_{\sigma}} := \{\lambda \in \overline{B}_{\delta_0}(\lambda_0) \cap V_{\sigma} \ | \ (E \setminus \{a\}) \cap L_{V_{\sigma}}(\{\lambda\}) \cap \overline{B}_\delta(a) \neq \emptyset \}
\end{equation}
is $\mathcal{H}^m$-measurable. This follows from the fact that it is $\sigma$-compact which can be seen as follows. Define the $\sigma$-compact sets
\[
S_1 := \{(\lambda,x) \in (\overline{B}_{\delta_0}(\lambda_0) \cap V_{\sigma}) \times \mathbb{R}^n \ | \ P_\lambda(x) -P_\lambda(a) =0 \}
\]
and
\[
S_2 := S_1 \cap [\Lambda \times ((E \setminus \{a\}) \cap \overline{B}_\delta(a))],  
\]
then we have $C_{V_{\sigma}} = \pi_1(S_2)$, where $\pi_1 \colon \Lambda \times \mathbb{R}^n \to \Lambda$ denotes the orthogonal projection onto the first component. 

Let $D_{V_{\sigma}} := (\overline{B}_{\delta_0}(\lambda_0) \cap V_{\sigma}) \setminus C_{V_{\sigma}}$. From the definitions of $\psi_\sigma$ and $C_{V_{\sigma}}$ we deduce that $\psi_\sigma(D_{V_{\sigma}})=0$. Now \cite[Theorem 18.5]{mat1} implies that for $\mathcal{H}^m$-a.e. $\lambda \in \overline{B}_{\delta_0}(\lambda_0) \cap V_{\sigma}$ one of the following conditions holds true
\begin{align}
    \label{e:key4}
    &\limsup_{s \to 0^+} \psi_\sigma( \overline{B}_s(\lambda))s^{-m}=0, \\
    \label{e:key5}
    &\limsup_{s \to 0^+} \psi_\sigma( \overline{B}_s(\lambda))s^{-m}=\infty, \\
    \label{e:key6}
    &\ \ \ \ \ \ \ \ \ \ \ \ \   \lambda \in C_{V_{\sigma}}. 
\end{align}
We have thus proved that for every $\sigma \in V^\bot$ and for $\mathcal{H}^m$-a.e. $\lambda \in \overline{B}_{\delta_0}(\lambda_0) \cap V_\sigma$ one among \eqref{e:key4}-\eqref{e:key6} holds true. Denote by $\pi_{V^\bot} \colon \mathbb{R}^l \to V^\bot$ the orthogonal projection of $\mathbb{R}^l$ onto $V^\bot$. Notice that once we prove the $\mathcal{L}^l$-measurability of the set of points $\lambda \in \overline{B}_{\delta_0}(\lambda_0)$ for which one between \eqref{e:key4}-\eqref{e:key6} holds true with $\sigma$ replaced by $\pi_{V^\bot}(\lambda)$, then by applying Tonelli's theorem we see that for $\mathcal{L}^l$-a.e. $\lambda \in \overline{B}_{\delta_0}(\lambda_0)$ one of the following conditions holds true
\begin{align}
    \label{e:key7}
    &\limsup_{s \to 0^+} \psi_{\pi_{V^\bot}(\lambda)}( \overline{B}_s(\lambda))s^{-m}=0, \\
    \label{e:key8}
    &\limsup_{s \to 0^+} \psi_{\pi_{V^\bot}(\lambda)}( \overline{B}_s(\lambda))s^{-m}=\infty, \\
    \label{e:key9}
    &\ \ \ \ \ \ \ \ \ \ \ \ \    \lambda \in C_{V_{\pi_{V^\bot}(\lambda)}}. 
\end{align}
However, notice that the exceptional set of $\mathcal{L}^l$-measure zero depends on the $m$-plane $V$ and this fact does not allow to deduce \eqref{e:cond1}-\eqref{e:cond3} immediately from \eqref{e:key7}-\eqref{e:key9}. Nevertheless this issue can be solved by combining Proposition \ref{p:equivcone} with properties \eqref{e:key7}-\eqref{e:key9}. This allows us to infer the validity of \eqref{e:cond1}-\eqref{e:cond3} on $\overline{B}_{\delta_0}(\lambda_0)$ and thus the proposition follows by a simple covering argument. 

It remains to prove the measurability. In order to simplify the notation let us denote $\pi_{V^\bot}(\lambda)$ by $\sigma_\lambda$. First we prove that the map $\varphi_{a,r,s} \colon \overline{B}_{\delta_0}(\lambda_0) \to [0,\infty]$ defined by 
\begin{equation}
    \label{e:key10}
    \varphi_{a,r,s}(\lambda):=\phi\big(E \cap \overline{B}_r(a) \cap L_{V_{\sigma_\lambda}}(\overline{B}_s(\lambda))\big)
\end{equation}
is $\mathcal{L}^l$-measurable. For every $a \in E$, $r >0$, and $0<s<1$ define the $\sigma$-compact sets 
\begin{equation*}
    S_1(a,r,s) :=\{(\lambda',\lambda,x) \in  \Lambda^2 \times \mathbb{R}^n \ | \  P_{\lambda'}(x)= P_{\lambda'}(a),
    \ |\lambda' -\lambda|\leq s, \ \pi_{V^\bot}(\lambda' -\lambda)=0\},
\end{equation*}
\begin{equation*}
    S_2(a,r,s):= S_1(a,r,s) \cap [\overline{B}_{\delta_0}(\lambda_0) \times \overline{B}_{\delta_0}(\lambda_0) \times ((E\setminus \{a\}) \cap \overline{B}_r(a))].
\end{equation*}
Notice that 
\[
E \cap \overline{B}_r(a) \cap L_{V_{\sigma_\lambda}}( \overline{B}_s(\lambda)) = \pi_{2,3}(S_2(a,r,s))_\lambda,
\]
for every $\lambda \in \overline{B}_{\delta_0}(\lambda_0)$ where $\pi_{2,3} \colon \mathbb{R}^l \times \mathbb{R}^l \times \mathbb{R}^n \to \mathbb{R}^l \times \mathbb{R}^n$ denotes the orthogonal projection onto the second and third components. Moreover $S_2(a,r,s)$ is $\sigma$-compact because $S_1(a,r,s)$ is closed and hence $S_2(a,r,s)$ is the intersection of a closed set and a $\sigma$-compact set. This means that also $\pi_{2,3}(S_2(a,r,s))$ is $\sigma$-compact and we can apply Tonelli's theorem \cite[Proposition 5.2.1 ]{coh} on the product space $\Lambda \times \mathbb{R}^n$ with product measure $\mathcal{L}^{l} \otimes \phi \restr E$ to deduce that $\varphi_{a,r,s}(\cdot)$ is a $\mathcal{L}^l$-measurable map. For every $j=1,2,\dotsc$ notice that
\[
\begin{split}
\chi_{a,j}(\lambda)&:= \sup_{s < 1/j} \sup_{0<r<\delta} \varphi_{a,r,s}(\lambda)(rs)^{-m}\\
&= \sup_{s \in (0,1/j) \cap \mathbb{Q}} \ \sup_{s \in (0,\delta) \cap \mathbb{Q}} \varphi_{a,r,s}(\lambda)(rs)^{-m},
\end{split}
\]
whenever $\lambda \in \overline{B}_{\delta_0}(\lambda_0)$. Indeed we claim that for every $a \in E$ we have $\lim_{k \to \infty} \varphi_{a,r_k,s_k}(\lambda)= \varphi_{a,r,s}(\lambda)$ whenever $s_k \searrow s$ and $r_k \searrow r$ for every $0<s<1$ and $r>0$. This last fact is a consequence of the following pointwise convergence
\begin{equation}
\label{e:lebesgue}
\mathbbm{1}_{\overline{B}_{r_k}(a) \cap L_{V_{\sigma_\lambda}}(\overline{B}_{s_k}(\lambda))}(x) \to \mathbbm{1}_{\overline{B}_{r}(a) \cap L_{V_{\sigma_\lambda}}(\overline{B}_{s}(\lambda))}(x)
\end{equation}
\emph{for every} $x \in \mathbb{R}^n$ whenever $s_k \searrow s$ and $r_k \searrow r$. Our claim follows thus from Lebesgue's dominated convergence theorem applied to the sequence of Borel functions in \eqref{e:lebesgue} and with measure $\phi \restr E$. 

But this means that $\chi_{a,j}(\cdot)$ is a $\mathcal{L}^l$-measurable map. Finally, we notice that the points $\lambda \in \overline{B}_{\delta_0}(\lambda_0)$ for which \eqref{e:key7}-\eqref{e:key8} hold true are exactly 
\[
\{\lambda \in \overline{B}_{\delta_0}(\lambda_0) \ | \ \lim_{j \to \infty} \chi_{a,j}(\lambda) =0\}
 \ \
\text{ and } \ \ \{\lambda \in \overline{B}_{\delta_0}(\lambda_0) \ | \ \lim_{j \to \infty} \chi_{a,j}(\lambda) =\infty\},
\]
respectively. It remains therefore to prove the measurability of points satisfying \eqref{e:key9}. For this purpose define the $\sigma$-compact sets
\[
S_1 := \{(\lambda,x) \in \overline{B}_{\delta_0}(\lambda_0) \times \mathbb{R}^n \ | \ P_\lambda(x) -P_\lambda(a) =0 \}
\]
and
\[
S_2 := S_1 \cap (\Lambda \times [(E \setminus \{a\}) \cap \overline{B}_\delta(a)]),  
\]
then $C_{V_{\sigma_\lambda}} = \pi_1(S_2)$ where $\pi_1 \colon \mathbb{R}^l \times \mathbb{R}^n \to \mathbb{R}^l$ denotes the orthogonal projection onto the first component. Therefore $C_{V_{\sigma_\lambda}}$ is $\sigma$-compact and this concludes the proof.
\end{proof}

\section{Proofs of Proposition \ref{p:fubini} }
\label{a:fubini}
\begin{proof}[Proof of Proposition \ref{p:fubini}]
 Let $\mathcal{F}$ denote the collection of all sets $A \subset \mathbb{R}^n \times \Lambda$ for which the map
\[
\lambda \mapsto \mu_\lambda(A_\lambda)
\]
is $\mathcal{L}^l$-measurable. For $A \in \mathcal{F}$, we define
\[
\rho(A) := \int_\Lambda \mu_\lambda(A_\lambda) \, d\lambda.
\]

Define:
\[
\mathcal{P}_0 := \{B \times U \ | \  B \subset \mathbb{R}^n \text{ Borel}, \, U \subset \Lambda \text{ Borel} \},
\]
\[
\mathcal{P}_1 := \big\{ \bigcup_{j=1}^\infty A_j \ | \  A_j \in \mathcal{P}_0 \big\},
\quad
\mathcal{P}_2 := \big\{ \bigcap_{j=1}^\infty A_j \ | \  A_j \in \mathcal{P}_1 \big\}.
\]

Note that $\mathcal{P}_0 \subset \mathcal{F}$ and for $B \times U \in \mathcal{P}_0$, we have:
\[
\rho(B \times U) = \int_U \mu_\lambda(B) \, d\lambda.
\]

Also, $\mathcal{P}_0$ is closed under finite intersections and differences. Indeed, if $B_1 \times U_1, B_2 \times U_2 \in \mathcal{P}_0$, then:
\[
(B_1 \times U_1) \cap (B_2 \times U_2) = (B_1 \cap B_2) \times (U_1 \cap U_2) \in \mathcal{P}_0,
\]
\[
(B_1 \times U_1) \setminus (B_2 \times U_2) = [(B_1 \setminus B_2) \times U_1] \cup [(B_1 \cap B_2) \times (U_1 \setminus U_2)],
\]
which is a disjoint union of sets in $\mathcal{P}_0$. It follows that each element of $\mathcal{P}_1$ is a countable disjoint union of elements of $\mathcal{P}_0$. From this, it is not difficult to see that $\mathcal{P}_1 \subset \mathcal{F}$. We further notice that any finite intersections of elements of $\mathcal{P}_1$ can be written as the countable union of pairwise disjoint elements in $\mathcal{P}_0$. Hence, we have also that $A_1 \cap \dotsc \dotsc \cap A_N \in \mathcal{F}$ whenever $\{A_1 \dotsc, A_N\} \subset \mathcal{P}_1$ and $N$ is any positive integer.

\vspace{5mm}

We claim that for each $A \subset \mathbb{R}^n \times \Lambda$,
\begin{equation}
\label{e:infimizing}
\hat{\mathscr{I}}_m(A) = \inf \{ \rho(F) \ | \ A \subset F, \, F \in \mathcal{P}_1 \}.
\end{equation}

Let us prove the claim. If $A \subset F = \bigcup_{i=1}^\infty (B_i \times U_i)$ with $B_i \times U_i \in \mathcal{P}_0$, then:
\[
\rho(F) \leq \sum_{i=1}^\infty \rho(B_i \times U_i) = \sum_{i=1}^\infty \int_{U_i} \mu_\lambda(B_i) \, d\lambda,
\]
so by construction \eqref{e:fubinicon}
\[
\inf \{ \rho(F) \ | \ A \subset F \in \mathcal{P}_1 \} \leq \hat{\mathscr{I}}_m(A).
\]

Moreover, for any given $F \in \mathcal{P}_1$ there exists a disjoint sequence $(B'_i \times U'_i)_i \subset \mathcal{P}_0$ such that $F = \bigcup_i (B'_i \times U'_i) \in \mathcal{P}_1$. Hence, for any given $F \in \mathcal{P}_1$ with $A \subset F$ we can also write
\[
\rho(F) = \sum_i \int_{U'_i} \mu_\lambda(B'_i) \, d\lambda \geq \hat{\mathscr{I}}_m(A),
\]
completing the proof of \eqref{e:infimizing}.

\vspace{5mm}

Let us prove condition (2).
Fix $B \times U \in \mathcal{P}_0$. Then for all $F \in \mathcal{P}_1$ with $B \times U \subset F$,
\[
\hat{\mathscr{I}}_m(B \times U) \leq \int_U \mu_\lambda(B) \,d\lambda = \rho(B \times U) \leq \rho(F),
\]
so, by infimizing on all $F \in \mathcal{P}_1$ with $A \subset F$, Claim 1 implies:
\[
\hat{\mathscr{I}}_m(B \times U) = \int_U \mu_\lambda(B) \, d\lambda.
\]
This gives condition (2).

\vspace{5mm}

 Next we prove that $\hat{\mathscr{I}}_m$ is a Borel measure. First, we claim that any set $B \times U \in \mathcal{P}_0$ is $\hat{\mathscr{I}}_m$-measurable. So suppose $A \subset \mathbb{R}^n \times U$ and $A \subset F$ for some $F \in \mathcal{P}_1$. Then both $F \setminus (B \times U)$ and $F \cap (B \times U)$ are disjoint sets in $\mathcal{P}_1$. Consequently, since by construction we have that $F_\lambda$ is a Borel set in $\mathbb{R}^n$ for every $F \in \mathcal{P}_1$ and for every $\lambda \in \Lambda$, we can write
\begin{align*}
\hat{\mathscr{I}}_m(A \setminus (B \times U)) + \hat{\mathscr{I}}_m(A \cap (B \times U))
&\leq \rho(F \setminus (B \times U)) + \rho(F \cap (B \times U)) \\
&= \rho(F),
\end{align*}
where, in the last equality we used that, by assumption, $\mu_\lambda$ is a Borel measure for $\mathcal{L}^l$-a.e. $\lambda \in \Lambda$.
Thus, by Claim~\#1,
\[
\hat{\mathscr{I}}_m(A \setminus (B \times U)) + \hat{\mathscr{I}}_m(A \cap (B \times U)) \leq \hat{\mathscr{I}}_m(A).
\]
It follows that the set $B \times U$ is $\hat{\mathscr{I}}_m$-measurable and the claim is proved. Eventually, since the class $\mathcal{P}_0$ generates the Borel $\sigma$-algebra on $\mathbb{R}^n \times \Lambda$, we conclude that all Borel sets are $\hat{\mathscr{I}}_m$-measurable.

\vspace{5mm}

We claim that for every $A \subset \mathbb{R}^n \times \Lambda$, there exists $F \in \mathcal{P}_2 \cap \mathcal{F}$ such that $A \subset F$
\begin{equation}
\label{e:fubinieq1.3}
\hat{\mathscr{I}}_m(F)=\rho(F) = \hat{\mathscr{I}}_m(A).
\end{equation}
Notice that, since we already proved that $\hat{\mathscr{I}}_m$ is a Borel measure, once that the claim is proved, condition (1) immediately follows.

Let us prove the claim. If $\hat{\mathscr{I}}_m(A) = \infty$, let $F = \mathbb{R}^n \times \Lambda$. Then, clearly $\hat{\mathscr{I}}_m(\mathbb{R}^n \times \Lambda)= \infty$ and by \eqref{e:infimizing} we deduce also $\rho(\mathbb{R}^n \times \Lambda)=\infty$. Otherwise, by applying again \eqref{e:infimizing} for each $j$, there exists $F_j \in \mathcal{P}_1$ with $A \subset F_j$ and:
\begin{equation}
\label{e:idk1}
\rho(F_j) < \hat{\mathscr{I}}_m(A) + \frac{1}{j}.
\end{equation}
Set:
\begin{equation}
\label{e:idk2}
F := \bigcap_{j=1}^\infty F_j \in \mathcal{P}_2.
\end{equation}
Then, since $\rho(F_j) < \infty$, we know that $\mu_\lambda((F_j)_\lambda) < \infty$ for $\mathcal{L}^l$-a.e. $\lambda \in \Lambda$. Therefore, we can make use of the monotonicity property of measures, to infer that
\[
\mu_\lambda(F_\lambda) = \lim_{j \to \infty} \mu_\lambda \bigg( \bigcap_{k=1}^j (F_k)_\lambda \bigg), \ \ \text{ for $\mathcal{L}^l$-a.e. $\lambda \in \Lambda$},
\]
Hence, we deduce that $F \in \mathcal{F}$. Moreover, by Fatou's lemma:
\begin{align*}
\hat{\mathscr{I}}_m(A) \leq \rho(F) = \int_\Lambda \mu_\lambda(F_\lambda) \, d\lambda &= \int_\Lambda \lim_{j \to \infty} \mu_\lambda\bigg( \bigcap_{k=1}^j (F_k)_\lambda \bigg) \, d\lambda \\
&\leq \liminf_{j \to \infty} \int_\Lambda  \mu_\lambda\bigg( \bigcap_{k=1}^j (F_k)_\lambda \bigg) \, d\lambda \\
&\leq \liminf_{j \to \infty} \rho(F_j) \\
&\leq \hat{\mathscr{I}}_m(A),
\end{align*}
where we used also that finite intersection of elements of $\mathcal{P}_1$ belongs to $\mathcal{F}$. Thus, we have shown that $\hat{\mathscr{I}}_m(A) = \rho(F)$. To complete the proof of \eqref{e:fubinieq1.3}, we now assume further that $A \in \mathcal{P}_2$, so that $A = \bigcap_j F'_j$ for some sequence $(F'_j) \subset \mathcal{P}_1$. In this case, we modify the above construction by defining
\[
F := \bigcap_{j=1}^\infty (F_j \cap F'_j).
\]
Setting $F''_j := F_j \cap F'_j$, we note that each $F''_j \in \mathcal{F}$ satisfies $A \subset F''_j$ and still fulfills condition \eqref{e:idk1}. Moreover, by construction, $F = A$. Therefore, applying the same argument as before yields $\rho(F) = \hat{\mathscr{I}}_m(F)$, which proves \eqref{e:fubinieq1.3}.

\vspace{5mm}

Let us prove condition (3). Take a Borel set $A \subset \mathbb{R}^n \times \Lambda$. If $\hat{\mathscr{I}}_m(A) = 0$, we know that there exists $F \in \mathcal{P}_2 \cap \mathcal{F}$ such that $A \subset F$ and $\rho(F) = 0$. Then 
\[
\mu_\lambda(A_\lambda) \leq \mu_\lambda(F_\lambda) =0, \ \ \text{ for $\mathcal{L}^l$-a.e. $\lambda \in \Lambda$}.
\]
Hence $A \in \mathcal{F}$ and $\rho(A) = 0$.

Now suppose $\hat{\mathscr{I}}_m(A)< \infty$. Then there exists $F \in \mathcal{P}_2 \cap \mathcal{F}$ such that $A \subset F$ and $\hat{\mathscr{I}}_m(A) = \hat{\mathscr{I}}_m(F)$, and, being both sets $A$ and $F$ Borel, we have also 
\[
\hat{\mathscr{I}}_m(F \setminus A) = 0.
\]
Thus, $F \setminus A \in \mathcal{F}$ and $\rho(F \setminus A)=0$. Hence
\[
0=\int_\Lambda \mu_\lambda\big((F \setminus A)_\lambda\big) \, d\lambda.
\]

Finally, we observe that, since $F \in \mathcal{P}_2$, then it is not difficult to verify that $F_\lambda$ is a Borel set of $\mathbb{R}^n$ for every $\lambda \in \Lambda$. Therefore the set $A_\lambda = F_\lambda \setminus (F \setminus A)_\lambda$ is $\mu_\lambda$-measurable  as it is the difference between a Borel set and $\mu_\lambda$-negligible set (hence the difference between two $\mu_\lambda$-measurable sets). As a consequence we can infer
\[
0=\int_\Lambda \mu_\lambda\big((F \setminus A)_\lambda\big) \, d\lambda = \int_\Lambda |\mu_\lambda(F_\lambda) - \mu_\lambda (A_\lambda)| \, d\lambda,
\]
which immediately implies the $\mathcal{L}^\ell$-measurablity of the map $\lambda \mapsto \mu_\lambda(A_\lambda)$ as well as equality
\[
\hat{\mathscr{I}}_m(A) = \rho(F) = \int_{\Lambda} \mu_\lambda(F_\lambda)\, d\lambda = \int_{\Lambda} \mu_\lambda(A_\lambda)\, d\lambda.
\]
This concludes the proof of (3).

\end{proof}

\end{document}